\newcommand{\ubar}[1]{\underaccent{\bar}{#1}}
\def\Xint#1{\mathchoice
{\XXint\displaystyle\textstyle{#1}}%
{\XXint\textstyle\scriptstyle{#1}}%
{\XXint\scriptstyle\scriptscriptstyle{#1}}%
{\XXint\scriptscriptstyle%
\scriptscriptstyle{#1}}%
\!\int}
\def\XXint#1#2#3{{\setbox0=\hbox{$#1{#2#3}{%
\int}$ }
\vcenter{\hbox{$#2#3$ }}\kern-.6\wd0}}
\def\barint{\, \Xint -} 
\def\bariint{\barint_{} \kern-.4em \barint}
\def\bariiint{\bariint_{} \kern-.4em \barint}
\renewcommand{\iint}{\int_{}\kern-.34em \int} 
\renewcommand{\iiint}{\iint_{}\kern-.34em \int} 
\DeclareMathAlphabet{\mathcal}{OMS}{cmsy}{m}{n}
\theoremstyle{plain}
\newtheorem{theorem}{Theorem}[section]
\newtheorem{lemma}[theorem]{Lemma}
\newtheorem{corollary}[theorem]{Corollary}
\newtheorem{proposition}[theorem]{Proposition}
\theoremstyle{definition}
\newtheorem{remark}[theorem]{Remark}
\newcommand{\R}{\mathbb{R}}
\newcommand{\p}{\partial}
\newcommand{\les}{\lesssim}
\newcommand{\ges}{\gtrsim}
\newcommand{\norm}[1]{\lVert #1 \rVert}
\renewcommand{\:}{\colon}
\newcommand{\loc}{{\rm loc}}
\let\div\relax
\DeclareMathOperator{\div}{div}
\DeclareMathOperator{\sgn}{sgn}
\let\tilde\relas
\newcommand{\tilde}[1]{\widetilde{#1}}
\newcommand{\BMO}{{\rm BMO}}
\newcommand{\avg}{{\rm avg}}
\newcommand{\Vol}{{\rm Vol}}
\numberwithin{equation}{section}
\setlist[enumerate]{leftmargin=*}
\title{Regularity properties of passive scalars with rough divergence-free drifts}
\author{Dallas Albritton}
\address{Courant Institute of Mathematical Sciences, New York University, New York, NY 10012}
\email{daa399@cims.nyu.edu}
\author{Hongjie Dong}
\address{Division of Applied Mathematics, Brown University, 182 George Street, Providence, RI 02912}
\email{Hongjie\_Dong@brown.edu}
\date{\today}
\begin{document}
\begin{abstract}
We present sharp conditions on divergence-free drifts in Lebesgue spaces for the passive scalar advection-diffusion equation
\[
	\p_t \theta - \Delta \theta + b \cdot \nabla \theta = 0
\]
to satisfy local boundedness, a single-scale Harnack inequality, and upper bounds on fundamental solutions. We demonstrate these properties for drifts $b$ belonging to $L^q_t L^p_x$, where $\frac{2}{q} + \frac{n}{p} < 2$, or $L^p_x L^q_t$, where $\frac{3}{q} + \frac{n-1}{p} < 2$. For steady drifts, the condition reduces to $b \in L^{\frac{n-1}{2}+}$. The space $L^1_t L^\infty_x$ of drifts with `bounded total speed' is a borderline case and plays a special role in the theory. To demonstrate sharpness, we construct counterexamples whose goal is to transport anomalous singularities into the domain `before' they can be dissipated.
\end{abstract}


\maketitle
\setcounter{tocdepth}{1}
\tableofcontents

\section{Introduction}
We consider the linear advection-diffusion equation
\begin{equation}
\label{eq:ADE}
	\tag{A-D}
	\p_t \theta - \Delta \theta + b \cdot \nabla \theta = 0.
\end{equation}
The solution $\theta = \theta(x,t)$ is known as a \emph{passive scalar}, and the prescribed divergence-free velocity field $b = b(x,t)$ is known as the \emph{drift}.

Our goal is to understand, in detail, the regularity properties of~\eqref{eq:ADE} when the drift $b$ is \emph{rough}. Rough divergence-free drifts arises naturally in the context of nonlinear PDEs in fluid dynamics.

To understand what is `rough', we recall the scaling symmetry
\begin{equation}
	\label{eq:scalingsymmetry}
	u \to u(\lambda x, \lambda^2 t), \quad b \to \lambda b(\lambda x,\lambda^2 t), \quad \lambda >0.
\end{equation}
In dimensional analysis, one writes $[x] = L$, $[t] = L^2$, and $[b] = L^{-1}$. The scaling~\eqref{eq:scalingsymmetry} identifies the Lebesgue spaces $L^q_t L^p_x$, where $\frac{2}{q} + \frac{n}{p} \leq 1$, as \emph{(sub)critical} spaces for the drift, meaning spaces whose norms do not grow upon `zooming in' with the scaling symmetry. For example,
\begin{equation}
	X = L^\infty_t L^n_x,\, L^2_t L^\infty_x,\, L^{n+2}_{t,x}
\end{equation} are \emph{critical spaces}, whose norms are dimensionless, i.e., invariant under the symmetry~\eqref{eq:scalingsymmetry}. Here, $n \geq 2$ is the spatial dimension.

When $b$ belongs to one of the above critical Lebesgue spaces, it is not difficult to adapt the work of De Giorgi, Nash, and Moser~\cite{de1957sulla,nash1958continuity,moserharnack} to demonstrate that weak solutions of~\eqref{eq:ADE} are H{\"o}lder continuous and satisfy Harnack's inequality. The above threshold is known to be \emph{sharp} for H{\"o}lder continuity within the scale of Lebesgue spaces, see the illuminating counterexamples in~\cite{silvestre2013loss,wu2021supercritical}. The divergence-free condition moreover allows access to drifts in the critical spaces $X = L^\infty_t L^{-1,\infty}_x, L^\infty_t \BMO^{-1}_x$, considered by~\cite{osada1987,SSSZ,qianxisingular2019}. In these spaces, it is furthermore possible to prove Gaussian upper and lower bounds on fundamental solutions in the spirit of Aronson~\cite{aronson}.

In this paper, we are concerned with \emph{supercritical} drifts, for which continuity may fail. Nonetheless, much of the regularity theory may be salvaged. The divergence-free structure plays a crucial role here,\footnote{The divergence-free structure plays a more subtle role in the critical case. Without this structure, the drift is required to be small in a critical Lebesgue space or Kato class, and local boundedness may depend on the `profile' of the drift.} as is already visible from the computation
\begin{equation}
	\int (b \cdot \nabla \theta) \theta \phi^2 \, dx \, dt =  \int b \cdot \nabla \left( \frac{\theta^2}{2} \right) \phi^2 \, dx \, dt = - \int  \theta^2 (b \cdot \nabla \phi) \phi.
\end{equation}
With this well known observation, one may apply Moser's iteration scheme to demonstrate that, when $b \in L^q_t L^p_x$ and $\frac{2}{q} + \frac{n}{p} < 2$, solutions are \emph{locally bounded}, see~\cite{nazarov2012harnack}. Typical examples are
\begin{equation}
	X = L^\infty_t L^{\frac{n}{2}+}_x, L^{\frac{n+2}{2}+}_{t,x}.
\end{equation}
Under these conditions (and a weak background assumption), the Harnack inequality persists as a \emph{single-scale Harnack inequality}~\cite{ignatovaelliptic,ignatovakukavicaryzhik}: In the steady case $\theta = \theta(x)$,
\begin{equation}
	\sup_{B_R} \theta \leq C_R \inf_{B_R} \theta,
\end{equation}
where $C_R$ may become unbounded as $R \to 0^+$. Whereas a scale-invariant Harnack inequality implies H{\"o}lder continuity, it is perhaps less well known that a single-scale Harnack may hold in the absence of H{\"o}lder continuity. Finally, in this setting, pointwise upper bounds on fundamental solutions continue to hold, although they have `fat tails' compared to their Gaussian counterparts~\cite{zhang2004strong,qianxilelllq2019}. 

Our first contribution is to understand the sharpness of the condition $b \in L^q_t L^p_x$, $\frac{2}{q} + \frac{n}{p} < 2$. We find that $L^1_t L^\infty_x$, the space of drifts with `bounded total speed' (in the terminology of~\cite{taolocalizationcompactness}), plays a special role and informs the counterexamples we construct in Section~\ref{sec:counters}. We summarize the results pertaining to this condition in Theorem~\ref{thm:lqlp}. \\

In the steady case, there is an additional subtle feature, which is not well known and that we find surprising: Local boundedness continues to hold when $b \in L^{\frac{n-1}{2}+}$. To our knowledge, this `dimension reduction' was first observed in this context by Kontovourkis~\cite{kontovourkis2007elliptic} in his thesis.\footnote{
This `dimension reduction' itself goes back at least to work of Frehse and R$\overset{\circ}{\text{u}}${\v z}i{\v c}ka~\cite{frehse1996existence} on the steady Navier-Stokes equations in $n=6$. The `slicing' is also exploited by Struwe in~\cite{struwenavierstokes}.} Heuristically, Kontovourkis' key observation is as follows. Consider the basic $L^2$ energy estimate, in a ball $B_r$ without smooth cut-off. The drift contributes the boundary term
\begin{equation}
	\int_{B_r} (b \cdot \nabla \theta) \theta \, dx = \int_{\p B_r} \frac{\theta^2}{2} b \cdot n \, d\sigma,
\end{equation}
where $d\sigma$ is the surface area measure.
Since $\nabla \theta \in L^2(B_R)$, on `many slices' $r \in (R/2,R)$, we have $\nabla \theta \in L^2(\p B_r)$, with a quantitative bound. Similarly, $b$ belongs to $L^{\frac{n-1}{2}+}(\p B_r)$ on `many slices'. Thus, one may exploit Sobolev embedding on the sphere $\p B_r$ to estimate the boundary term. This dimension reduction was recently rediscovered by Bella and Sch{\"a}ffner~\cite{belleschaffnerscpam}, who proved local boundedness and a single-scale Harnack inequality in the context of certain degenerate elliptic PDEs, which we review below.

Since the work of Kontovourkis, it has been an interesting question, what dimension reduction holds in the parabolic setting? In particular, is $b \in L^{\frac{n+1}{2}+}_{t,x}$ enough for local boundedness? Very recently, X.~Zhang~\cite{zhang2020maximum} generalized the work~\cite{belleschaffnerscpam} of Bella and Sch{\"a}ffner to the parabolic setting, and among other things, demonstrated local boundedness under the condition $b \in L^p_x L^q_t$, $\frac{3}{q} + \frac{n-1}{p} < 2$, $p \leq q$, see Corollary~1.5 therein. Crucially, the order of integration is reversed. The condition $b \in L^{\frac{n-1}{2}+}_x L^\infty_t$ implies the elliptic case of Kontovourkis. From this condition, we see that, perhaps, one dimension is not `reduced', but rather hidden into the time variable.

In Theorem~\ref{thm:dimreductionversion}, we present our second contribution, namely, (i) the parabolic Harnack inequality and pointwise upper bounds on fundamental solutions in this setting, and (ii) counterexamples which illustrate the meaning and sharpness of the `dimension reduction'. \\

We now present the main results, which constitute a detailed picture of the local regularity theory for the passive scalar advection-diffusion equation~\eqref{eq:ADE} with supercritical drifts.

Let $n \geq 2$, $\Omega \subset \R^n$ be a bounded domain, and $\Omega' \subset\subset \Omega$ be a subdomain. Let $I = (S,T]$ and $I'=(S',T'] \subset I$ be finite intervals such that $S<S'$. Let $Q_I = \Omega \times I$ and $Q_I' = \Omega' \times I'$. Let $p,q \in [1,+\infty]$.

 \begin{theorem}[$b \in L^q_t L^p_x$]
 \label{thm:lqlp}
\emph{(Local boundedness)}~\cite{nazarov2012harnack} If
\begin{equation}
	\zeta_0 := \frac{2}{q} + \frac{n}{p} < 2,
\end{equation}
then we have the following \emph{quantitative local boundedness} property: If $\theta \in L^1(Q_I) \cap C^\infty(Q_I)$ satisfies the drift-diffusion equation~\eqref{eq:ADE} in $Q_I$
with divergence-free drift $b \in C^\infty(Q_I)$ and
\begin{equation}
	b \in L^q_t L^p_x(Q_I),
\end{equation}
 then
\begin{equation}
	\sup_{Q_I'} |\theta| \les \norm{\theta}_{L^1(Q_I)},
\end{equation}
where the implied constant depends on $n$, $\Omega$, $\Omega'$, $I$, $I'$, $p$, $q$, and $\norm{b}_{L^q_t L^p_x(Q_I)}$. \\

\emph{(Single-scale Harnack)}~\cite{ignatovakukavicaryzhik} If, additionally, $b \in L^2_t H^{-1}_x(Q_I)$ and $\theta > 0$, then we have the following \emph{quantitative Harnack inequality}: If $I_1, I_2 \subset\subset I$ are intervals satisfying $\sup I_1 < \inf I_2$, then
\begin{equation}
	\sup_{\Omega' \times I_1} \theta \les \inf_{\Omega' \times I_2} \theta,
\end{equation}
where the implied constant depends on $n$, $\Omega$, $\Omega'$, $I$, $I_1$, $I_2$, $p$, $q$, $\norm{b}_{L^q_t L^p_x(Q_I)}$, and $\norm{b}_{L^2_t H^{-1}_x(Q_I)}$. \\

\emph{(Bounded total speed)} If $(p,q) = (\infty,1)$, then the above quantitative local boundedness property holds with constants depending on $b$ itself rather than $\| b \|_{L^1_t L^\infty_x(Q_I)}$. (The property is false without this adjustment.) \\

 \emph{(Sharpness)} Let $Q = B_1 \times (0,1)$. There exist a smooth divergence-free drift $b \in C^\infty(Q)$ belonging to $L^q_t L^p_x(Q)$ for all $(p,q) \in [1,+\infty]^2$ with $2/q+n/p = 2$, $(p,q) \neq (\infty,1)$, and satisfying the following property. There exists a smooth solution $\theta \in L^\infty_t L^1_x \cap C^\infty(Q)$ to the advection-diffusion equation~\eqref{eq:ADE} in $Q$ with
\begin{equation*}
	 \sup_{B_{1/2} \times (0,T)} |\theta| \to +\infty \text{ as } T \to 1_-.
\end{equation*}
In particular, the above quantitative local boundedness property fails when $2/q+n/p=2$ and $q > 1$. \\

\emph{(Upper bounds on fundamental solutions)}~\cite{qianxilelllq2019} If the divergence-free drift $b \in C^\infty_0(\R^n \times [0,+\infty))$ belongs to $L^q_t L^p_x(\R^n \times \R_+)$ and $1 \leq \zeta_0 < 2$, then the fundamental solution $\Gamma = \Gamma(x,t;y,s)$ to the parabolic operator $L=\partial_t-\Delta+b \cdot \nabla$ satisfies
\begin{equation}
	\label{eq:upperboundslqlp}
\begin{aligned}
\Gamma(x,t;y,s) &\leq C (t-s) \left( \frac{1}{t-s} + \frac{M_0}{(t-s)^{1+\frac{\alpha_0}{2}}} \right)^{\frac{n+2}{2}}\\
&\quad \times \left[ \exp \left( - \frac{c |x-y|^2}{t-s}  \right) + \exp\left(  - \frac{ c |x-y|^{1+\frac{1}{1+\alpha_0}}}{((t-s) M_0)^{\frac{1}{1+\alpha_0}}} \right) \right]
\end{aligned}	
\end{equation}
for all $x,y \in \R^n$ and $0 \leq s < t < +\infty$. Here,
\begin{equation}
	\alpha_0 = \frac{\zeta_0-1}{\theta_2}, \quad M_0 = C \| b \|_{L^q_t L^p_x(\R^n \times \R_+)}^{\frac{1}{\theta_2}} + \frac{1}{4}, \quad \theta_2 = 1-\frac{\zeta_0}{2}.
\end{equation}
 \end{theorem}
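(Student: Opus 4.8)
The plan is to prove~\eqref{eq:upperboundslqlp} by combining Moser's iteration with Davies' exponential-perturbation method (in the spirit of Aronson), adapted to supercritical drifts. There are two ingredients: a quantitative $L^1 \to L^\infty$ smoothing bound for the solution operator $T_{s,t}$ of~\eqref{eq:ADE}, carrying \emph{explicit} dependence on the drift; and a weighted energy estimate exploiting the divergence-free structure. The off-diagonal factors in~\eqref{eq:upperboundslqlp} then come from optimizing over the weight.

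Write $\tau = t-s$. The first step is the on-diagonal bound
\[
\norm{T_{s,t}}_{L^1 \to L^\infty} \les \tau^{-n/2}\bigl(1 + M_0\,\tau^{-\alpha_0/2}\bigr)^{(n+2)/2},
\]
which is \emph{exactly} the prefactor in~\eqref{eq:upperboundslqlp} since $\tau\bigl(\tfrac1\tau + \tfrac{M_0}{\tau^{1+\alpha_0/2}}\bigr)^{(n+2)/2} = \tau^{-n/2}\bigl(1+M_0\tau^{-\alpha_0/2}\bigr)^{(n+2)/2}$. This is the quantitative local boundedness of Theorem~\ref{thm:lqlp}, applied to $\Gamma(\cdot,\cdot;y,s)$ on cylinders of scale $\sqrt{\tau}$ away from $(y,s)$ together with $\norm{\Gamma(\cdot,\sigma;y,s)}_{L^1} \le 1$. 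The point inside the Moser iteration is that, since $\div b = 0$ on $\R^n$, the drift enters at level $2^k$ only through $\int\!\!\int (b\cdot\nabla\phi)\,\theta^{2^k}$ with $\phi$ the iteration cutoff, which after H\"older in $x$, the parabolic Sobolev inequality, and -- crucially -- H\"older in $t$ is bounded by $\norm{b}_{L^q_t L^p_x}$ times a \emph{positive} power of $\tau$; positivity of the time power is precisely the strict deficit $2 - \zeta_0 = 2\theta_2 > 0$. Summing the resulting geometric-type series over $k$ produces the blow-up rate as $\tau \to 0^+$, with $M_0 = C\norm{b}_{L^q_t L^p_x}^{1/\theta_2} + \tfrac14$ and $\alpha_0 = (\zeta_0-1)/\theta_2$ falling out of balancing the Sobolev and time exponents. (For $\zeta_0 \le 1$ the drift is subcritical, $\alpha_0 \le 0$, and genuine Gaussian bounds hold; the regime of interest is $\zeta_0 \in [1,2)$.)

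The second step is the off-diagonal decay. Fix a bounded Lipschitz weight $\psi$ with $\norm{\nabla\psi}_{L^\infty} \le \gamma$ and pass to the conjugated evolution $T^\psi_{s,t} := e^{\psi}\,T_{s,t}\,e^{-\psi}$, whose generator acquires a first-order term $-2\nabla\psi\cdot\nabla$ and a potential $-(|\nabla\psi|^2+\Delta\psi) + b\cdot\nabla\psi$; again, since $\div b = 0$,
\[
\int (b\cdot\nabla u)\,u\,e^{2\psi}\,dx = \int u^2\,(b\cdot\nabla\psi)\,e^{2\psi}\,dx,
\]
so the drift never puts a derivative on $u$ and is estimated as in Step~1, now with an extra factor $\gamma$. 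Rerunning the iteration for this generator, the $\psi$-independent part reproduces the prefactor above while the weight contributes a factor $\exp\!\bigl(C\gamma^2\tau + C(\gamma\,\norm{b}_{L^q_t L^p_x}\,\tau^{\theta_2})^{1/\theta_2}\bigr)$; here $\gamma\,\norm{b}_{L^q_t L^p_x}\,\tau^{\theta_2}$ is the dimensionless drift parameter (as $[\,\norm{b}_{L^q_t L^p_x}\,] = L^{\zeta_0-1}$) and the exponent $1/\theta_2 = 2 + \alpha_0$ on it is what will produce the non-Gaussian decay. Since a kernel is dominated by the operator norm, and the adjoint of $L$ has drift $-b$ (still divergence-free), the delta initial data is absorbed by the splitting $\Gamma(x,t;y,s) = \int \Gamma(x,t;z,\tfrac{s+t}{2})\,\Gamma(z,\tfrac{s+t}{2};y,s)\,dz$, Cauchy--Schwarz, and the weighted $L^1 \to L^2$ bounds for $T^\psi$ and its adjoint, giving
\[
\Gamma(x,t;y,s) \le e^{-\psi(x)+\psi(y)}\,\tau^{-n/2}\bigl(1+M_0\tau^{-\alpha_0/2}\bigr)^{(n+2)/2}\exp\!\bigl(C\gamma^2\tau + C(\gamma\norm{b}_{L^q_t L^p_x}\tau^{\theta_2})^{1/\theta_2}\bigr).
\]

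Finally, take $\psi$ linear of slope $\gamma$ in the direction $(x-y)/|x-y|$, so $-\psi(x)+\psi(y) = -\gamma|x-y|$, and optimize over $\gamma > 0$: minimizing $-\gamma|x-y| + C\gamma^2\tau$ gives $\exp(-c|x-y|^2/\tau)$, while minimizing $-\gamma|x-y| + C(\gamma\norm{b}_{L^q_t L^p_x}\tau^{\theta_2})^{1/\theta_2}$ gives $\exp(-c|x-y|^{2/\zeta_0}/(\tau M_0)^{1/(1+\alpha_0)})$, using $M_0 \approx C\norm{b}_{L^q_t L^p_x}^{1/\theta_2}$ and the identities $2/\zeta_0 = 1 + \tfrac{1}{1+\alpha_0}$ and $\tfrac{\theta_2}{1-\theta_2} = \tfrac{1}{1+\alpha_0}$; bounding the infimum of the sum of the two exponents by a constant times the sum of the two separately optimized ones yields~\eqref{eq:upperboundslqlp}. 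I expect the main obstacle to be the bookkeeping in the iteration: one must carry \emph{both} the weight slope $\gamma$ and the time variable through every Moser step, check convergence of the drift-error series with the advertised constants, and verify that the optimization reproduces exactly the exponents $\alpha_0$, $\theta_2$ and the quantity $M_0$ -- the conceptual inputs (the divergence-free cancellation and $\zeta_0 < 2$) are transparent, but the exponent arithmetic is unforgiving.
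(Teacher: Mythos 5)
Your plan addresses only the last assertion of Theorem~\ref{thm:lqlp}, the upper bound~\eqref{eq:upperboundslqlp} on fundamental solutions. For that part, your route is essentially the paper's: Davies' exponential perturbation $P^\psi_{s\to t} = e^\psi T_{s,t} e^{-\psi}$, a weighted energy (Gronwall) estimate in which the divergence-free structure converts $\int e^{2\psi}(b\cdot\nabla u)u$ into $\int e^{2\psi}u^2\, b\cdot\nabla\psi$, the quantitative local boundedness to pass from $L^2$ to $L^\infty$, duality and the semigroup splitting at the midpoint to reach $L^1\to L^\infty$, and finally optimization over the slope $\gamma$ in an inner (Gaussian) and outer (stretched-exponential) region. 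Your exponent arithmetic checks out: $1+\frac{1}{1+\alpha_0}=\frac{2}{\zeta_0}$ and $\gamma\|b\|\tau^{\theta_2}$ is indeed dimensionless, and the balance $M_0\gamma^{2+\alpha_0}\tau$ versus $\gamma^2\tau$ is exactly the paper's case split~\eqref{eq:region1}--\eqref{eq:region2}. The only organizational differences are that the paper uses a truncated radial weight whose derivative is supported on the ``good slices'' of the form boundedness condition~\eqref{eq12.33} (so that the same framework covers $L^p_x L^q_t$ drifts), whereas you use a linear weight and estimate the drift term directly by H\"older and parabolic Sobolev interpolation --- which is fine for $L^q_t L^p_x$ --- and that the paper does not re-run Moser for the conjugated operator but applies the unweighted local boundedness and pays a factor $e^{2\gamma\min(\sqrt{t},|x_0|/2)}$.

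The genuine gap is that four of the five assertions of the theorem are not proved. (i)~\emph{Local boundedness} is invoked as an input (you cite the theorem being proved); your one-sentence sketch of the Moser step with the divergence-free cancellation and the deficit $2-\zeta_0=2\theta_2>0$ is the correct idea and matches Proposition~\ref{pro:stuffissatisfied} plus Theorem~\ref{thm:localboundedness}, but it is not a proof, and the Caccioppoli/hole-filling/iteration bookkeeping is where the constants $M_0,\alpha_0$ actually come from. (ii)~The \emph{single-scale Harnack inequality} requires an entirely separate argument: the logarithm transform $v=\log(\theta/\mathbf{K})$, the decomposition $b=-\div a+b_2$ from the hypothesis $b\in L^2_t H^{-1}_x$ (Lemma~\ref{lem:decompositionofdrift}), the weak-$L^1$ estimates via the quadratic ODE comparison (Lemma~\ref{lem:weakl1est}), and Moser's Lemma~\ref{lem:moserlem}; none of this follows from the Davies machinery. (iii)~The \emph{bounded total speed} endpoint $(p,q)=(\infty,1)$ is not covered by the form boundedness condition at all and is handled in the paper by a dynamic rescaling $x=\lambda(t)y$ with $\dot\lambda=-2\|b(\cdot,t)\|_{L^\infty}$. (iv)~The \emph{sharpness} assertion is a construction (transporting compactly supported heat subsolutions by drifts with speed profile $\bar S(t)=(t\log t^{-1}\log\log t^{-1})^{-1}$), which your proposal does not attempt. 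As written, the proposal establishes at most the fundamental-solution bound conditionally on the local boundedness statement.
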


 See Figure~\ref{fig:dim3} for an illustration of Theorem~\ref{thm:lqlp}.

	\begin{figure}[h!]
		\centering
		\includegraphics[width=4in]{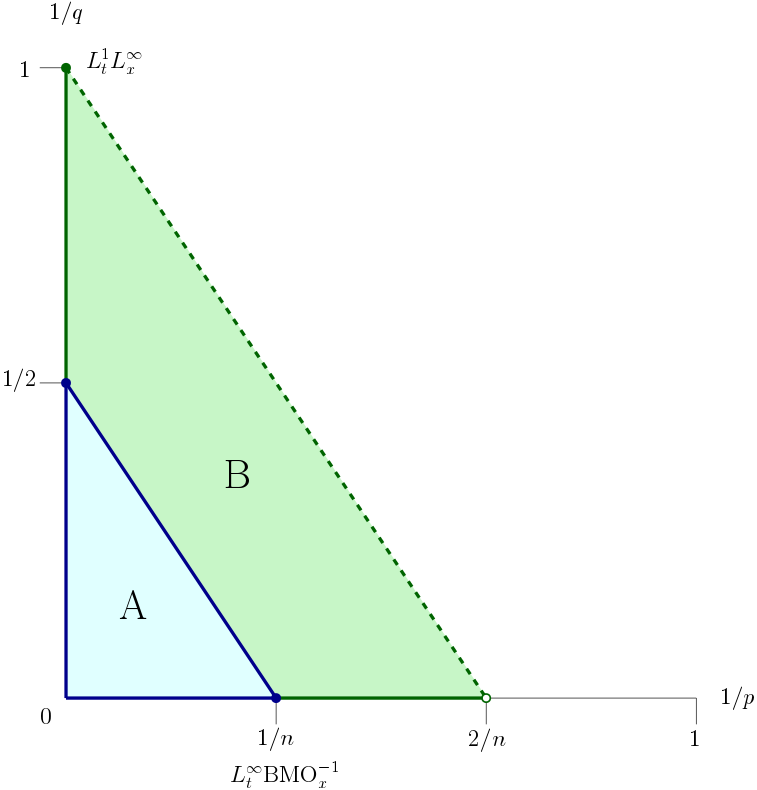}
	\caption{Divergence-free drift $b \in L^q_t L^p_x$ in dimension $n \geq 2$ (dimension $n=3$ illustrated above). \emph{Region~A} ($2/q + n/p \leq 1$): Local boundedness, Harnack inequality, and H{\"o}lder continuity. \emph{Region~B} ($1 < 2/q + n/p < 2$ or $(p,q) = (\infty,1)$): Local boundedness and single-scale Harnack inequality. \emph{Dashed line}: Local boundedness is false.}
\label{fig:dim3}
\end{figure}

\begin{theorem}[$b \in L^p_x L^q_t$]
\label{thm:dimreductionversion}
\emph{(Local boundedness)}~\cite{zhang2020maximum} If
\begin{equation}
	\zeta_0 := \frac{3}{q} + \frac{n-1}{p} < 2,\quad p\le q,
\end{equation}
then we have the following \emph{quantitative local boundedness} property: If $\theta \in L^1(Q_I) \cap C^\infty(Q_I)$ satisfies the drift-diffusion equation~\eqref{eq:ADE} in $Q_I$
with divergence-free drift $b \in C^\infty(Q_I)$ and
\begin{equation}
	b \in L^p_x L^q_t(Q_I),
\end{equation}
then
\begin{equation}
	\sup_{Q_I'} |\theta| \les \norm{\theta}_{L^1(Q_I)},
\end{equation}
where the implied constant depends on $n$, $\Omega$, $\Omega'$, $I$, $I'$, $p$, $q$, and $\norm{b}_{L^p_x L^q_t(Q_I)}$. \\

\emph{(Single-scale Harnack)} If, additionally, $b \in L^2_t H^{-1}_x(Q_I)$ and $\theta > 0$, then we have the following \emph{quantitative Harnack inequality}: If $I_1, I_2 \subset\subset I$ are intervals satisfying $\sup I_1 < \inf I_2$, then
\begin{equation}
	\sup_{\Omega' \times I_1} \theta \les \inf_{\Omega' \times I_2} \theta,
\end{equation}
where the implied constant depends on $n$, $\Omega$, $\Omega'$, $I$, $I_1$, $I_2$, $p$, $q$, $\norm{b}_{L^p_x L^q_t(Q_I)}$, and $\norm{b}_{L^2_t H^{-1}_x(Q_I)}$. \\

\emph{(Sharpness, steady case)}. Let $n \geq 3$. The quantitative local boundedness property fails for steady drifts $b \in L^{\frac{n-1}{2}}(B)$ and steady solutions $\theta$ in the ball $B$. \\

\emph{(Sharpness, time-dependent case)}. Let $n \geq 2$ and $Q = B_1 \times (0,1)$. There exist a smooth divergence-free drift $b \in C^\infty(Q)$ belonging to $L^p_x L^q_t(Q)$ for all $p,q \in [1,+\infty]$ with $p \leq q$ and $3/q+(n-1)/p > 2$ and satisfying the following property. There exists a smooth solution $\theta \in L^\infty_t L^1_x \cap C^\infty(Q)$ to the advection-diffusion equation~\eqref{eq:ADE} in $Q$ with
\begin{equation*}
	 \sup_{B_{1/2} \times (0,T)} |\theta| \to +\infty \text{ as } T \to 1_-.
\end{equation*}
In particular, the above quantitative local boundedness property fails when $3/q+(n-1)/p > 2$ and $p \leq q$. Finally, the drift additionally belongs to $L^q_t L^p_x(Q)$ for all $(p,q) \in [1,+\infty]^2$ with $2/q+n/p > 2$.
 \\


\emph{(Upper bounds on fundamental solutions)} If the divergence-free drift $b \in C^\infty_0(\R^n \times [0,+\infty))$ belongs to $L^p_x L^q_t(\R^n \times \R_+)$ and $1 \leq \zeta_0 < 2$, then the fundamental solution $\Gamma = \Gamma(x,t;y,s)$ to the parabolic operator $L=\partial_t-\Delta+b \cdot \nabla$ satisfies
\begin{equation}
	\label{eq:upperboundsdimred}
\begin{aligned}
\Gamma(x,t;y,s) &\leq C (t-s) \left( \frac{1}{t-s} + \frac{M_0}{(t-s)^{1+\frac{\alpha_0}{2}}} \right)^{\frac{n+2}{2}}\\
 &\quad\times\left[ \exp \left( - \frac{c |x-y|^2}{t-s}  \right) + \exp\left(  - \frac{ c |x-y|^{1+\frac{1}{1+\alpha_0}}}{((t-s) M_0)^{\frac{1}{1+\alpha_0}}} \right) \right]
\end{aligned}	
\end{equation}
for all $x,y \in \R^n$ and $0 \leq s < t < +\infty$. Here,
\begin{equation}
	\alpha_0 = \frac{\zeta_0-1}{\theta_2}, \quad M_0 = C \| b \|_{L^p_x L^q_t(\R^n \times \R_+)}^{\frac{1}{\theta_2}} + \frac{1}{4}, \quad \theta_2 = 1-\frac{\zeta_0}{2}.
\end{equation}
\end{theorem}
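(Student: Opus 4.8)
The plan is to deduce the four substantive assertions of Theorem~\ref{thm:dimreductionversion} --- the single-scale Harnack inequality, the two sharpness statements, and the pointwise upper bounds on fundamental solutions --- from the quantitative local boundedness of Zhang~\cite{zhang2020maximum}, used as a black box, together with Moser-type and Davies-type arguments for the positive results and explicit transport constructions for the counterexamples. In every estimate the drift is handled through the divergence-free cancellation $\int(b\cdot\nabla\theta)\psi=-\int\theta\,b\cdot\nabla\psi$ and, when $L^p_xL^q_t$ integrability must actually be invoked, through the dimension-reduction slicing on spheres $\p B_r$ (in the spirit of Kontovourkis, Bella--Sch{\"a}ffner, and Zhang). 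For the single-scale Harnack: if $\theta>0$ solves~\eqref{eq:ADE} then $v:=1/\theta$ satisfies $\p_t v-\Delta v+b\cdot\nabla v=-2\theta^{-3}|\nabla\theta|^2\le 0$, so $v$ is a subsolution, and the subsolution form of the Moser iteration in~\cite{zhang2020maximum} applied to $\theta$ over $\Omega'\times I_1$ and to $v$ over $\Omega'\times I_2$ yields, for some small $p_0>0$ and slightly enlarged cylinders $Q_1,Q_2$, the bounds $\sup_{\Omega'\times I_1}\theta\les(|Q_1|^{-1}\int_{Q_1}\theta^{p_0})^{1/p_0}$ and $\inf_{\Omega'\times I_2}\theta\ges(|Q_2|^{-1}\int_{Q_2}\theta^{-p_0})^{-1/p_0}$. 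It remains to prove the crossover bound $(|Q_1|^{-1}\int_{Q_1}\theta^{p_0})(|Q_2|^{-1}\int_{Q_2}\theta^{-p_0})\les 1$, and this is where $b\in L^2_tH^{-1}_x$ enters: writing locally $b=\div A$ with $A$ antisymmetric and $A\in L^2_{t,x}$, testing~\eqref{eq:ADE} with $\phi^2/\theta$, and using antisymmetry to annihilate the second-order term, one obtains for $w:=\log\theta$
\[
\int_I\!\!\int|\nabla w|^2\phi^2\;\les\;\int_I\!\!\int\bigl(1+|A|^2\bigr)|\nabla\phi|^2+\text{(lower order)},
\]
together with the corresponding control of the time derivative of the spatial average of $w$; hence $w$ lies in parabolic $\BMO$ with a quantitative, scale-dependent seminorm, parabolic John--Nirenberg gives the crossover bound, and chaining the three estimates yields the Harnack inequality with the asserted dependences.

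For the upper bounds on fundamental solutions I would run the Aronson--Davies--Fabes--Stroock scheme, in parallel with the $L^q_tL^p_x$ case~\cite{qianxilelllq2019}. Since $b\in C^\infty_0$, the fundamental solution $\Gamma$ exists, is nonnegative, and conserves mass, $\int_{\R^n}\Gamma(x,t;y,s)\,dx\equiv 1$; combining this with the quantitative $L^1\to L^\infty$ local boundedness --- now driven by $\norm{b}_{L^p_xL^q_t}$ via the sphere-slicing argument --- and tracking the scaling gives the on-diagonal prefactor $(t-s)((t-s)^{-1}+M_0(t-s)^{-1-\alpha_0/2})^{(n+2)/2}$. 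For the off-diagonal decay I would conjugate the operator by $e^{\lambda\psi}$ for a globally Lipschitz $\psi$ with $|\nabla\psi|\le 1$ and derive a weighted $L^2$ energy inequality; after integrating the drift term by parts (using $\div b=0$) it is absorbed by a Gagliardo--Nirenberg interpolation performed on the sphere $\p B_r$ rather than on $\R^n$, which is precisely what converts the index $\zeta_0=\tfrac3q+\tfrac{n-1}p$ into a cost of order $\lambda^{1+\alpha_0}$ per unit time. One then gets $\norm{e^{\lambda\psi}\Gamma(\cdot,t)}_{L^2}\les\exp\!\bigl(c\lambda^2(t-s)+cM_0\lambda^{1+\alpha_0}(t-s)\bigr)\times(\text{on-diagonal factor})$; optimizing $\lambda$ against $|x-y|$ produces the Gaussian tail from the $\lambda^2$ term and the fat tail with exponent $1+\tfrac1{1+\alpha_0}$ from the $\lambda^{1+\alpha_0}$ term, and Davies' semigroup trick (splitting $\Gamma$ at an intermediate time) assembles the on- and off-diagonal estimates into~\eqref{eq:upperboundsdimred}, with $\alpha_0=(\zeta_0-1)/\theta_2$, $M_0$, and $\theta_2=1-\zeta_0/2$ emerging from the optimization.

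For sharpness I would use the constructions of Section~\ref{sec:counters}. In the time-dependent case: a smooth divergence-free $b$ whose flow transports a concentrating bump of $\theta$ into a neighbourhood of the origin as $t\to 1_-$, on a space-time budget chosen so that diffusion cannot dissipate the bump before $t=1$, giving $\sup_{B_{1/2}\times(0,T)}|\theta|\to\infty$; the speed profile of $b$ is calibrated to sit exactly on the threshold $\tfrac3q+\tfrac{n-1}p=2$ --- hence just outside every space to which Zhang's theorem applies --- and a direct computation with the explicit $b$ checks that it lies in $L^q_tL^p_x(Q)$ only for $\tfrac2q+\tfrac np>2$, so the example contradicts neither positive theorem. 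In the steady case $n\ge 3$, arclength along streamlines plays the role of time: one builds a steady divergence-free $b$ (via a vector potential, or a stream function in a suitable foliation by $2$-planes) whose flow pipes an essentially transported bounded profile into an unbounded singularity at a point, with $b$ homogeneous of degree $-1$ and concentrated so that it lies precisely in $L^{\frac{n-1}{2}}(B)$ and in no better Lebesgue space; the reduction $n\mapsto n-1$ is visible because the construction is, in effect, an $(n-1)$-dimensional critical counterexample.

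The technical heart is the off-diagonal estimate for $\Gamma$: closing the weighted energy inequality requires simultaneously exploiting the divergence-free structure and the sphere-slicing bound while keeping the dependence on $\lambda$, $t-s$, and $\norm{b}_{L^p_xL^q_t}$ sharp enough to recover the exact exponents $\alpha_0$ and $M_0$. A close second is calibrating the two counterexamples so that $b$ lands exactly on the claimed integrability thresholds rather than merely near them.
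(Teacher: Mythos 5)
Your architecture for the positive statements is essentially the paper's: local boundedness comes from a sphere-slicing form boundedness condition (the paper packages this as~\eqref{eq:fbc} and verifies it in Proposition~\ref{pro:stuffissatisfied} rather than citing Zhang as a black box), and the fundamental-solution bounds come from Davies' conjugation method combined with a global version~\eqref{eq12.33} of that condition. For the Harnack inequality you take a genuinely different route: weak Harnack for $\theta^{-1}$ plus parabolic $\BMO$/John--Nirenberg for $\log\theta$. That route is legitimate in principle, but it is exactly the one the paper deliberately avoids (see the remark at the end of Section~1, which flags a gap in the weak-Harnack argument of~\cite{ignatovakukavicaryzhik}); the paper instead derives a weak-$L^1$ bound on $(\log(\theta/\mathbf{K}))_\pm$ by an ODE comparison (Lemma~\ref{lem:weakl1est}) and closes with Moser's Lemma~3 from~\cite{moserpointwise}, precisely to sidestep the parabolic John--Nirenberg lemma. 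Two technical corrections within your positive arguments: (i) from $b\in L^2_tH^{-1}_x$ one gets $b=-\div a+b_2$ with $a$ antisymmetric and \emph{both} $a,b_2\in L^2_{t,x}$ (Lemma~\ref{lem:decompositionofdrift}), not $b=\div A$ exactly; (ii) in the Davies step the conjugation cost must be $M_0\gamma^{2+\alpha_0}(t-s)$, not $M_0\lambda^{1+\alpha_0}(t-s)$ --- optimizing $-\gamma|x|+M_0\gamma^{1+\alpha_0}t$ would yield a tail exponent $1+1/\alpha_0$ rather than the claimed $1+1/(1+\alpha_0)$ --- and the weight $\psi$ cannot be an arbitrary Lipschitz function: its radial derivative must be supported on the good slices ($\psi'=\gamma\mathbf{1}_A$), since otherwise the flux term $\int e^{2\psi}(b\cdot\nabla\psi)u^2$ is not controllable by the $L^p_xL^q_t$ norm.

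The genuine gaps are in the sharpness statements. For the steady case, you propose a drift, homogeneous of degree $-1$, piping mass into a singularity \emph{at a point}. This cannot work: the smooth approximating solutions obey the maximum principle, so if the limit were bounded in an annular neighborhood of $\p B_{3/4}$ the approximations would be uniformly bounded on $B_{1/2}$; the singular set must therefore reach the boundary. The paper's construction is a shear flow $b=V(r)e_z$ on a slab, reducing to the critical Schr{\"o}dinger equation $-\Delta u+Vu=0$ in dimension $n-1$ with $u=\log\log(1/r)$ and $V=\Delta u/u\in L^{(n-1)/2}$; the solution is singular along the entire $z$-axis, and $|b|\sim r^{-2}$ (up to logarithms) in the transverse variables --- not $-1$-homogeneous. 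For the time-dependent case, you claim to calibrate $b$ \emph{exactly on} the threshold $3/q+(n-1)/p=2$. The theorem asserts, and the paper proves, failure only in the open region $>2$; the borderline case on the $L^p_xL^q_t$ scale is explicitly left open (Remark~\ref{rmk:anopenquestion}) except at two endpoints. Your sketch offers no mechanism for hitting the threshold (the paper's construction interpolates $o_k(1)$ bounds in $L^q_tL^p_x$ and $L^p_{x'}L^\infty_{x_n}L^{\tilde q}_t$ for self-similarly rescaled, transported bumps, and these bounds degenerate at the threshold), so this part of the proposal both overclaims and underspecifies.
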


See Figures~\ref{fig:dim4} and~\ref{fig:dim2} for an illustration of Theorem~\ref{thm:dimreductionversion}.

	\begin{figure}[h!]
		\centering
		\includegraphics[width=4in]{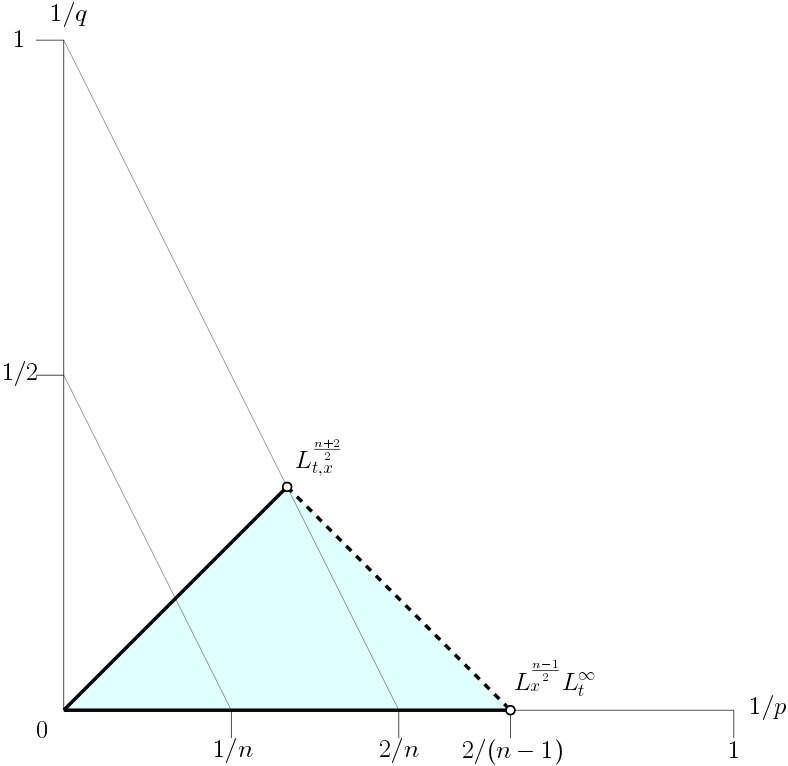}
	\caption{Divergence-free drift $b \in L^p_x L^q_t$, dimension $n \geq 3$ (dimension $n=4$ illustrated above). Local boundedness and single-scale Harnack inequality.}
\label{fig:dim4}
\end{figure}

	\begin{figure}[h!]
		\centering
		\includegraphics[width=4.1in]{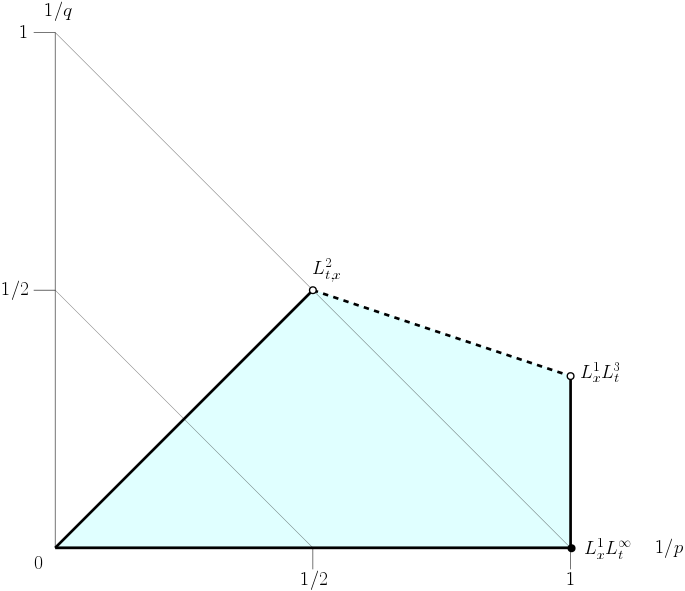}
	\caption{Divergence-free drift $b \in L^p_x L^q_t$, dimension $n =2$. Local boundedness and single-scale Harnack inequality.}
\label{fig:dim2}
\end{figure}

We prove Theorems~\ref{thm:lqlp} and~\ref{thm:dimreductionversion}, including the known results, in a self-contained way below.

\begin{remark}
The local boundedness property and Harnack inequality in Theorems~\ref{thm:lqlp} and~\ref{thm:dimreductionversion} can be easily extended to accomodate drifts satisfying $\div b \leq 0$ (with the background assumption $b \in L^2_{t,x}(Q_I)$ in the Harnack inequality). These properties and the fundamental solution estimates can also be extended to  divergence-form elliptic operators $\div a \nabla \cdot$ with bounded, uniformly elliptic~$a$.
\end{remark}

\begin{remark}
Our condition $b \in L^2_t H^{-1}_x(Q_I)$ appears to be new and is used to connect the forward- and backward-in-time regions in the Harnack inequality, see Lemma~\ref{lem:universal}. In contrast, the background condition in the single-scale Harnack inequality in~\cite{ignatovakukavicaryzhik} is $b \in L^\infty_t L^2_x(Q_I)$. 
\end{remark}

\subsection*{Discussion of dimension reduction principle}
The `slicing' described above in the steady setting is more subtle in the time-dependent setting because the anisotropic condition $\theta \in L^\infty_t L^2_x$ does not restrict well to slices in the radial variable $r$; compare this to the isotropic condition $\nabla \theta \in L^2_{t,x}$. Indeed, to `slice' in a variable, it seems necessary for that variable to be summed `last' (that is, on the outside) in the norm. The condition $b \in L^p_x L^q_t$, $p \leq q$, $\frac{3}{q} + \frac{n-1}{p} < 2$, in Theorem~\ref{thm:dimreductionversion} comes, roughly speaking, from interpolating between the isotropic condition $b \in L^{\frac{n+2}{2}+}_{t,x}$, in which the order of integration may be changed freely, and the dimensionally reduced condition $b \in L^{\frac{n-1}{2}+}_x L^\infty_t$, which implies that $b \in  L^\infty_t L^{\frac{n-1}{2}+}_\sigma(\p B_r \times I)$ on `many slices', say, a set of $r \in A \subset (1/2,1)$ with measure $|A| > 1/4$. Local boundedness under this condition was already observed by X. Zhang in~\cite[Corollary 1.5]{zhang2020maximum}, and the counterexamples we construct answer an open question in Remark~1.6 therein.

Our proof of local boundedness and the Harnack inequality is built on a certain \emph{form boundedness condition}~\eqref{eq:fbc}, see Section~\ref{sec:localbdd}, which subsumes a wide variety of possible assumptions on $b$.
 For example, in Proposition~\ref{pro:stuffissatisfied}, we verify~\eqref{eq:fbc} not only in the context of Theorems~\ref{thm:lqlp} and~\ref{thm:dimreductionversion} but also under the more general conditions
\begin{equation}
	b \in L^q_t L^\beta_r L^\gamma_\sigma((B_R \setminus B_{R/2}) \times I), \quad \beta \geq \frac{n}{2}, \quad \frac{2}{q} + \frac{1}{\beta} + \frac{n-1}{\gamma} < 2
\end{equation}
and
\begin{equation}
	b \in L^\kappa_r L^q_t L^p_\sigma((B_R \setminus B_{R/2}) \times I), \quad p \leq q, \quad \frac{3}{q} + \frac{n-1}{p} < 2.
\end{equation}
Furthermore, we allow arbitrarily low integrability $\kappa > 0$ in the radial variable; the slicing method does not require high integrability. Our proof of upper bounds on fundamental solutions is centered on a variant~\eqref{eq12.33} of the form boundedness condition, see Section~\ref{sec:upperboundsonfundamentalsolutions}, partially inspired by the work of Qi S. Zhang~\cite{zhang2004strong}.

We now describe the work~\cite{belleschaffnerscpam}, which was generalized to the parabolic setting in~\cite{zhang2020maximum}. The conditions in~\cite{belleschaffnerscpam} are on the ellipticity matrix $a$, which is allowed to be degenerate: Define
\begin{equation}
	\lambda(x) := \inf_{|\xi| = 1} \xi \cdot a(x) \xi, \quad \mu(x) := \sup_{|\xi| = 1} \frac{|a(x) \xi|^2}{\xi \cdot a(x)\xi}.
\end{equation}
If $n \geq 2$, $p,q \in (1,+\infty]$, and
\begin{equation}
	\label{eq:bellacondition}
	\lambda^{-1} \in L^q(B), \quad \mu \in L^p(B), \quad \frac{1}{p} + \frac{1}{q} < \frac{2}{n-1},
\end{equation}
then weak solutions of $- \div a \nabla u = 0$ are locally bounded and satisfy a single-scale Harnack inequality. The analogous condition with $\frac{2}{n}$ on the right-hand side is due to Trudinger in~\cite{trudinger}. By examples in~\cite{franchiseapioniserra}, the right-hand side cannot be improved to $\frac{2}{n-1} + \varepsilon$. Divergence-free drifts $b$ belong to the above framework: Under general conditions, it is possible to realize $b$ as the divergence of an antisymmetric \emph{stream matrix}: $b_i = d_{ij,i}$. Then we have $- \Delta \theta + b \cdot \nabla \theta = - \div [ (I+d) \nabla \theta ]$, and $\mu$ captures the antisymmetric part $d$. The steady examples we construct in Section~\ref{sec:counters} handle the equality case in~\eqref{eq:bellacondition}. We mention also the works~\cite{bellaschaffnerpqgrowth,bella2020non}.

Earlier, it was hoped that the dimension reduction could be further adapted to treat the case $b \in L^{\frac{n+1}{2}+}_{t,x}$ in the parabolic setting by estimating a half-derivative in time: $|\p_t|^{1/2} \theta \in L^2_{t,x}$, since this condition is better adapted to slicing than $\theta \in L^\infty_t L^2_x$. On the other hand, our counterexamples rule out this possibility. Half time derivatives in parabolic PDE go back, at least, to~\cite[Chapter III, Section 4]{ladyzhenskaya}, see~\cite{simonbortz} for further discussion.

\subsection*{Discussion of counterexamples and `bounded total speed'}
Solutions of~\eqref{eq:ADE} in the whole space and evolving from initial data $\theta_0 \in L^1(\R^n)$ become bounded instantaneously. This is captured by the famous Nash estimate~\cite{nash1958continuity}:
\begin{equation}
	\label{eq:nashestimate}
	\| \theta(\cdot,t) \|_{L^\infty(\R^n)} \les t^{-\frac{n}{2}} \| \theta_0 \|_{L^1(\R^n)},
\end{equation}
where the implied constant is \emph{independent} of the divergence-free drift $b$. The Nash estimate indicates that a divergence-free drift does not impede smoothing, in the sense of boundedness, of a density, even if the density is initially a Dirac mass. Therefore, for rough drifts, local boundedness must be violated in a different way: The danger is that the drift can `drag' an anomolous singularity into the domain of observation from outside. There is a competition between the drift, which transports the singularity with some speed, and the diffusion, which smooths the singularity at some rate. Will the singularity, entering from outside, be smoothed before it can be observed inside the domain?

Consider a Dirac mass $\delta_{x = - \vec{e_1}}$, which we seek to transport inside the domain. If one can transport the Dirac mass inside $B_{1/2}$ \emph{instantaneously}, one can violate local boundedness. This can be done easily via the drift $b(x,t) = \delta_{t = 0} \vec{e}_1$, which is singular in time. This example already demonstrates the importance of the space $L^1_t L^\infty_x$, whose drifts cannot transport the mass inside arbitrarily quickly.

To improve this example, we seek the most efficient way to transport the Dirac mass. Heuristically, the evolution of the Dirac mass is mostly supported in a ball of radius $R(t) \sim \sqrt{t}$. Therefore, we define our drift $b$ to be $S(t) \vec{e}_1$ restricted to this support. That is, the drift lives on a ball of radius $R(t)$ moving in the $x_1$-direction at speed $S(t)$. Since we wish to move the Dirac mass instantaneously, we guess that $S(t) \sim 1/t$. A back-of-the-envelope calculation gives
\begin{equation}
	\| b \|_{L^q_t L^p_x}^q \sim \int_0^1 S(t)^q R(t)^{\frac{nq}{p}} \, dt \sim \int_0^1 t^{-q+\frac{nq}{2p}} \, dt.
\end{equation}
The above quantity is finite when $2/q+n/p > 2$; more care is required to get the borderline cases in Theorem~\ref{thm:lqlp}, see Section~\ref{sec:counters}. This heuristic is the basis for our time-dependent counterexamples in Section~\ref{sec:counters}, except that we use appropriate subsolutions to keep the compact support property, we glue together many of these Dirac masses, and $S(t)$ must be chosen more carefully.

The above transport-vs.-smoothing phenomenon is also responsible for the `fat tails' in the upper bounds~\eqref{eq:upperboundslqlp} and~\eqref{eq:upperboundsdimred} on the fundamental solutions. These upper bounds do not align with the Nash estimate~\eqref{eq:nashestimate} because the Nash estimate does not effectively capture spatial localization.

The elliptic counterexample with $b \in L^{\frac{n-1}{2}}$ is achieved by introducing an ansatz which reduces the problem to counterexamples for the steady Schr{\"o}dinger equation $-\Delta u + Vu = 0$ in dimension $n-1$. These steady counterexamples are singular on a line through the domain, as they must be to respect the maximum principle.

The time-dependent counterexamples in $L^p_x L^q_t$ seem to be more subtle, and we only exhibit them in the non-borderline cases $\zeta_0 = \frac{3}{q} + \frac{n-1}{p} > 2$ and $p \leq q$. When $\zeta_0 = 2$, we have counterexamples in the cases $p = q = \frac{n+2}{2}$ and $(p,q) = (\frac{n-1}{2},+\infty)$ (the steady example). We believe that local boundedness fails also between these two points, but the counterexamples are yet to be exhibited, see Remark~\ref{rmk:anopenquestion}.

\subsection*{Review of existing literature}
Following the seminal works of De Giorgi~\cite{de1957sulla} and Nash~\cite{nash1958continuity}, Moser introduced his parabolic Harnack inequality~\cite{moserharnack,mosererrata} (see~\cite{moserelliptic} for the elliptic case), whose original proof relied on a parabolic generalization of the John-Nirenberg theorem concerning exponential integrability of $\BMO$ functions. Later, Moser published a simplified proof~\cite{moserpointwise}, whose basic methods we follow. In~\cite{SSSZ}, Seregin, Silvestre, {\v S}ver{\'a}k, and Zlato{\v s} generalized Moser's methods to accomodate drifts in $L^\infty_t \BMO^{-1}_x$. For recent work on boundary behavior in this setting, see~\cite{linhanjill,Hofmann2021}. Generalizations to critical Morrey spaces and the supercritical Lebesgue spaces are due to~\cite{nazarov2012harnack,ignatovaelliptic,ignatovakukavicaryzhik,ignatovaslightlysupercritical}.

The Gaussian estimates on fundamental solutions were discovered by Aronson~\cite{aronson} and were generalized to divergence-free drifts by Osada in~\cite{osada1987} ($L^\infty_t L^{-1,\infty}_x$) and Qian and Xi ($L^\infty_t \BMO^{-1}_x$) in~\cite{qianxisingular2019,qianxilelllq2019}. Important contributions are due to~\cite{zhang2004strong}, who developed Gaussian-like upper bounds in the supercritical case $b \in L^{\frac{n}{2}+}(\R^n)$, $n \geq 4$, and~\cite{liskevichzhang,milmansemenov,semenovregularitytheorems}, among others. For recent progress on Green's function estimates with sharp conditions on lower order terms, see~\cite{MR3787362,MR3941634, sakellaris2020scale,mourgoglou2019regularity}.

The primary examples concerning the regularity of solutions to~\eqref{eq:ADE} can be found in~\cite{SSSZ,silvestre2013loss,wu2021supercritical}. Counterexamples to continuity with time-dependent drifts can be constructed by colliding two discs of $+1$ (subsolution) and $-1$ (supersolution) with radii $R(t) \sim \sqrt{1-t}$ and speeds $S(t) \sim 1/\sqrt{1-t}$. The parabolic counterexamples with \emph{steady} velocity fields constructed therein are more challenging. See~\cite{Filonov2013,FilonovShilkin} for examples in the elliptic setting. We also mention Zhikov's counterexamples~\cite{zhikovuniqueness} to uniqueness when $b$ does not belong to $L^2$, whereas weak solutions with zero Dirichlet conditions are known to be unique when $b \in L^2$~\cite{zhang2004strong}.

For recent counterexamples in the regularity theory of parabolic systems based on self-similarity, see~\cite{mooney}. \\

\begin{remark}
At a technical level, there is a small but, perhaps, non-trivial gap in the proof of the weak Harnack inequality in~\cite{ignatovakukavicaryzhik}, see (3.22) therein, where it is claimed that $\log_+ (\theta/\mathbf{K})$ is a supersolution. This seems related to a step in the proof of Lemma 6.20, p. 124, in Lieberman's book~\cite{liebermanbook}, which we had difficulty following, see the first inequality therein. Both of these are related to improving the weak $L^1$ inequality.

We opt to follow Moser's proof in~\cite{moserpointwise} more directly and skip the weak Harnack inequality. In principle, one could directly apply the parabolic John-Nirenberg inequality in~\cite{moserharnack,fabesgarofalo} to obtain the weak Harnack inequality.
\end{remark}

\section{Local boundedness and Harnack's inequality}
\label{sec:localbdd}

Let $b$ be a smooth, divergence-free vector field defined on $B_{R_0} \times I_0$, where $R_0 > 0$ and $I_0$ is an open interval. In the sequel, we will use a \emph{form boundedness condition}, which we denote by~\eqref{eq:fbc}: 
\begin{quote}
There exist constants $M, N, \alpha> 0$, $\varepsilon \in [0,1/2)$, and $\delta \in (0,1]$ satisfying the following property. For every $R \in [R_0/2,R_0]$, subinterval $I \subset I_0$, and Lipschitz $u \in W^{1,\infty}(B_R \times I)$, there exists a measurable set $A = A(\varrho,R,I,u) \subset (\varrho,R)$ with $|A| \geq \delta (R-\varrho)$ and satisfying
\begin{equation}
	\label{eq:fbc}
	\tag{FBC}
	\begin{aligned}
	&- \frac{1}{|A|} \iint_{B_A \times I} \frac{|u|^2}{2} (b \cdot n) \, dx \, dt \leq \frac{MR_0^\alpha }{\delta^\alpha R_0^2 (R-\varrho)^\alpha} \iint_{(B_R \setminus B_\varrho) \times I} |u|^2 \, dx \, dt \\
	&\quad\quad +N \iint_{(B_R \setminus B_\varrho) \times I} |\nabla u|^2 \, dx \, dt + \varepsilon \sup_{t \in I} \int_{B_R} |u(x,t)|^2 \, dx,
	\end{aligned}
\end{equation}
where $B_A = \cup_{r \in A} \p B_r$ and $n$ is the outer unit normal.
\end{quote}

The LHS of~\eqref{eq:fbc} appears on the RHS of the energy estimates.


In the situations we consider, $M$ may depend on $R_0$, and we can predict its dependence based on dimensional analysis. For example, since $b$ has dimensions of $L^{-1}$, the quantity $$
R_0^{1-\frac{2}{q}-\frac{n}{p}} \| b \|_{L^q_t L^p_x(B_{R_0} \times R_0^2 I)}
$$
is dimensionless.

In Proposition~\ref{pro:stuffissatisfied}, we show that~\eqref{eq:fbc} is satisfied under the hypotheses of Theorems~\ref{thm:lqlp} and~\ref{thm:dimreductionversion}. \\

\emph{Notation}. In this section, $R_0/2 \leq \varrho < R \leq R_0$ and $-\infty < T < \tau < 0$. Let us introduce the backward parabolic cylinders $Q_{R,T} = B_R \times (T,0)$. Our working assumptions are that $\theta$ is a non-negative Lipschitz function and $b$ is a smooth, divergence-free vector field. To give precise constants, we will frequently use the notation
\begin{equation}
	\label{eq:Cdef}
	\mathbf{C}(\varrho,\tau,R,T,M,\delta,\alpha) = \frac{1}{\delta^2 (R-r)^2} + \frac{MR_0^\alpha}{\delta^\alpha R_0^2 (R-r)^\alpha} + \frac{1}{\tau-T}
\end{equation}
involving the various parameters from~\eqref{eq:fbc}. Our convention throughout the paper is that all implied constants may depend on $n$.

\begin{theorem}[Local boundedness]
	\label{thm:localboundedness}
	Let $\theta$ be a non-negative Lipschitz subsolution and $b$ satisfy~\eqref{eq:fbc} on $Q_{R,T}$. Then, for all $\gamma \in (0,2]$,
	\begin{equation}
	\sup_{Q_{\varrho,\tau}} \theta \les_{N,\alpha,\varepsilon,\gamma} \mathbf{C}^{\frac{n+2}{2\gamma}} \norm{\theta}_{L^\gamma(Q_{R,T} \setminus Q_{\varrho,\tau})}.
	\end{equation}
\end{theorem}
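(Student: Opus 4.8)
The plan is to run Moser's iteration scheme, where the novelty is that the drift contribution to the energy estimate is handled by the form boundedness condition~\eqref{eq:fbc} rather than by smooth cutoffs in the radial variable. Throughout, I would work with the backward cylinders $Q_{R,T} = B_R \times (T,0)$ and keep track of the precise dependence on the geometric parameters through the quantity $\mathbf{C}$ in~\eqref{eq:Cdef}.

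\textbf{Step 1: Energy estimate with rough drift.} First I would establish the basic Caccioppoli-type inequality for powers of the subsolution. Fix $p \geq 1$ and set $v = \theta^p$ (more precisely $v = \theta_k^p$ for a truncation $\theta_k = \min(\theta,k)$, to justify the computations and pass to the limit at the end). Multiplying~\eqref{eq:ADE} by $\theta^{2p-1}$ times a cutoff $\phi(x,t)$ that depends only on $t$ near the temporal boundary but is the \emph{indicator} of $B_R$ in space — equivalently, integrating over a sliced region $B_A \times (T,0)$ — one gets
\begin{equation}
\label{eq:energystep1}
\sup_{t} \int_{B_R} v^2 \phi^2 \, dx + \iint |\nabla v|^2 \phi^2 \, dx \, dt \lesssim_p \iint v^2 (|\phi \p_t \phi| + |\nabla \phi|^2) + \left( \text{drift term} \right),
\end{equation}
where the drift term, after the integration by parts $\int (b \cdot \nabla \theta)\theta^{2p-1} = \frac{1}{2p}\int b \cdot \nabla(\theta^{2p})$ and using $\div b = 0$, reduces to a boundary term of exactly the form on the LHS of~\eqref{eq:fbc} with $u = v$. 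Here I apply~\eqref{eq:fbc} on the annulus $(B_R \setminus B_\varrho) \times I$: it produces a measurable set $A$ of radii with $|A| \geq \delta(R-\varrho)$ on which the boundary term is bounded by $\frac{M R_0^\alpha}{\delta^\alpha R_0^2 (R-\varrho)^\alpha}\iint v^2 + N \iint |\nabla v|^2 + \varepsilon \sup_t \int v^2$. The first term feeds into $\mathbf{C}$, the $N$-term is absorbed by choosing the spatial cutoff to range over the annulus between some $r \in A$ and an appropriate radius (so that $\iint |\nabla v|^2 \phi^2$ on the left dominates; this is where the smallness of the scheme in $N$ must be tracked — but note $N$ need not be small, it just multiplies the full gradient term, so one instead runs the iteration between radii in $A$ and pays a fixed constant), and the $\varepsilon$-term with $\varepsilon < 1/2$ is absorbed into the left side. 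The upshot is an estimate of the form $\|v\|_{L^2(Q_{r',\tau'})}^2 + \|\nabla(v\phi)\|_{L^2}^2 \lesssim_{N,\varepsilon} \mathbf{C} \|v\|_{L^2(Q_{R,T}\setminus Q_{\varrho,\tau})}^2$ for suitable nested radii.

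\textbf{Step 2: Gain of integrability via parabolic Sobolev.} Combining the control of $\sup_t \|v\phi\|_{L^2_x}^2$ and $\|\nabla(v\phi)\|_{L^2_{t,x}}^2$ with the parabolic Gagliardo--Nirenberg--Sobolev inequality yields $\|v\phi\|_{L^{2\kappa}(Q)}^2 \lesssim \mathbf{C}\|v\|_{L^2}^2$ with exponent $\kappa = 1 + 2/n$. Unwinding $v = \theta^p$, this reads $\|\theta\|_{L^{2p\kappa}(Q_{r',\tau'})} \lesssim (\mathbf{C})^{\frac{1}{2p}} \|\theta\|_{L^{2p}(Q_{R,T}\setminus Q_{\varrho,\tau})}$, the standard reverse-Hölder gain with constant whose $p$-dependence is polynomial.

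\textbf{Step 3: Iteration.} Iterate over $p_j = \kappa^j$ and radii $r_j \downarrow \varrho$, $\tau_j \uparrow \tau$ chosen so that $R - r_j \sim (R-\varrho)2^{-j}$ and $\tau_j - T \sim (\tau - T)$ (or a comparable geometric splitting), summing the logarithms: $\log \sup_{Q_{\varrho,\tau}}\theta \lesssim \sum_j \kappa^{-j}\big( C j + \log \mathbf{C}_j \big) \lesssim \log \mathbf{C} + C$, where the polynomial-in-$p$ and geometric-in-$j$ factors sum to a harmless constant. This gives $\sup_{Q_{\varrho,\tau}}\theta \lesssim_{N,\alpha,\varepsilon} \mathbf{C}^{\frac{n+2}{4}}\|\theta\|_{L^2(Q_{R,T}\setminus Q_{\varrho,\tau})}$, i.e.\ the claimed bound for $\gamma = 2$. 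For general $\gamma \in (0,2]$, interpolate: write $\|\theta\|_{L^2} \leq \|\theta\|_{L^\infty}^{1-\gamma/2}\|\theta\|_{L^\gamma}^{\gamma/2}$ on the annular region, feed this into the $\gamma=2$ estimate applied on a slightly larger cylinder, and absorb the $\|\theta\|_{L^\infty}^{1-\gamma/2}$ factor by Young's inequality (a standard covering/iteration argument of De Giorgi type, e.g.\ via the lemma that $f(r) \leq \epsilon f(R) + A(R-r)^{-a}$ implies $f(\varrho) \lesssim A(R-\varrho)^{-a}$); the power on $\mathbf{C}$ becomes $\frac{n+2}{2\gamma}$ after this interpolation. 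Finally, remove the truncations $\theta_k$ by monotone convergence.

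\textbf{Main obstacle.} The delicate point is \emph{Step 1}: reconciling the sliced boundary term with a clean Caccioppoli inequality. Unlike the classical scheme, I cannot use a smooth radial cutoff, because $b$ is too rough to integrate against $|\nabla\phi|$; instead~\eqref{eq:fbc} only controls the boundary flux through a favorable set $A$ of \emph{spheres}, and I must set up the iteration so that the spatial domains $B_{r_j}$ have $r_j \in A_j$ — but $A_j$ depends on the function $v = \theta^{p_j}$ at each stage, so the nested radii must be chosen \emph{after} invoking~\eqref{eq:fbc} at that stage, within the allotted annulus $(r_{j+1}, r_j)$ of width $\sim (R-\varrho)2^{-j}$. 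Managing this ``moving target'' bookkeeping — and ensuring the $N\iint|\nabla v|^2$ term from~\eqref{eq:fbc}, which is not small, is genuinely absorbed by running the energy estimate between two radii both lying in $A_j$ rather than relying on cutoff smallness — is the crux. Everything else is the standard Moser machinery.
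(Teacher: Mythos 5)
Your architecture matches the paper's: a Caccioppoli inequality for powers of $\theta$ in which the drift's boundary flux is controlled by \eqref{eq:fbc}, the parabolic Sobolev gain with exponent $1+2/n$, Moser iteration yielding the $L^2\to L^\infty$ bound with $\mathbf{C}^{(n+2)/4}$, and the interpolation--Young--outward-iteration step lowering $L^2$ to $L^\gamma$. But Step 1, which you correctly identify as the crux, contains a genuine gap in both of the places you flag. Your plan to run the estimate ``between radii both lying in $A_j$'' misreads how \eqref{eq:fbc} can be used: the condition bounds only the \emph{average} over $r\in A$ of the signed flux $-\iint_{\p B_r\times I}\tfrac{|u|^2}{2}(b\cdot n)$, not the flux through any individual sphere, so you cannot select a good radius $r_j\in A_j$ and take $B_{r_j}$ as the spatial domain at stage $j$. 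The correct move is to write the energy identity on $B_r\times(T,t)$ for \emph{every} $r\in(\varrho,R)$ and average the entire identity over $r\in A$; since the left-hand quantities $\int_{B_r}|\theta(\cdot,t)|^2$ and $\iint_{B_r\times(\tau,t)}|\nabla\theta|^2$ are monotone in $r$, the average dominates their values on $B_\varrho$, and the averaged flux is exactly what \eqref{eq:fbc} controls. This also dissolves the ``moving target'' bookkeeping entirely: no radius selection is needed, only the measure bound $|A|\ge\delta(R-\varrho)$, which enters through the $d\theta/dn$ cross term.

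The second gap is the absorption of the $N$ and $\varepsilon$ terms. Neither $N\iint_{(B_R\setminus B_\varrho)\times I}|\nabla u|^2$ nor $\varepsilon\sup_{t}\int_{B_R}|u|^2$ can be ``absorbed into the left side'' as stated: both live on the \emph{larger} cylinder, whereas the left-hand side only controls the corresponding quantities on $Q_{\varrho,\tau}$. Saying the $N$-term costs ``a fixed constant'' leaves the gradient over the annulus uncontrolled, and the inequality does not close. What is needed is Widman's hole-filling trick --- add $(1+N)\iint_{Q_{\varrho,\tau}}|\nabla\theta|^2$ to both sides so that the full-cylinder gradient and sup terms reappear with coefficient $\gamma=\max\{(N+1)/(N+2),\,2\varepsilon\}<1$ (this is precisely where $\varepsilon<1/2$ is used) --- followed by the iteration-over-scales lemma you invoke only in Step 3, applied outward along $\varrho_k\uparrow R$, $\tau_k\downarrow T$. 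With these two repairs the remainder of your proposal (Steps 2 and 3) goes through exactly as in the paper.
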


\begin{theorem}[Harnack inequality]
\label{thm:Harnack}
	Let $\theta$ be a non-negative Lipschitz solution on $Q^* = B \times (-T^*,T^*)$. Let $b \in L^2_t H^{-1}_x(Q^*;\R^n)$ satisfying~\eqref{eq:fbc} on $Q^*$. 
	Let $0 < \ell < T^*$ be the time lag. Then
	\begin{equation}
	\sup_{B_{1/2} \times (-T^*+\ell,0)} \theta \les_{N,M,A,T^*,\delta,\alpha,\varepsilon,\ell} \inf_{B_{1/2} \times (\ell,T^*)} \theta,
	\end{equation}
	where $A = \norm{b}_{L^2_t H^{-1}_x(Q^*;\R^n)}^2$.
\end{theorem}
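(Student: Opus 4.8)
The plan is to run Moser's scheme~\cite{moserpointwise}, controlling the drift through~\eqref{eq:fbc} for the main iteration (already available via Theorem~\ref{thm:localboundedness}) and through the hypothesis $b \in L^2_t H^{-1}_x$ for the passage across the time slice $\{t=0\}$. Replacing $\theta$ by $\theta+\varepsilon$ (still a solution for the same divergence-free drift) and letting $\varepsilon\to 0^+$ at the end, we may assume $\theta\geq\varepsilon>0$ on $Q^*$, so that $\log\theta$ and $\theta^{-1}$ are Lipschitz on compact subsets, with all constants independent of $\varepsilon$. \emph{Step 1 (reduction to a two-sided $L^{p_0}$ estimate).} Apply Theorem~\ref{thm:localboundedness} to the subsolution $\theta$ on a backward cylinder over the backward region, and to $v:=\theta^{-1}$ — which satisfies $\p_t v - \Delta v + b\cdot\nabla v = -2\theta^{-3}|\nabla\theta|^2\leq 0$, hence is a positive Lipschitz subsolution for the same drift — on a time-translated backward cylinder over the forward region. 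With $Q^- := B_{3/4}\times(-T^*+\ell/2,0)$ and $Q^+ := B_{3/4}\times(\ell/2,T^*)$, separated in time by the gap $(0,\ell/2)$, this gives for every $p_0\in(0,2]$
\begin{align*}
\sup_{B_{1/2}\times(-T^*+\ell,0)}\theta &\les \Big(\frac{1}{|Q^-|}\int_{Q^-}\theta^{p_0}\Big)^{1/p_0}, \\
\inf_{B_{1/2}\times(\ell,T^*)}\theta &\ges \Big(\frac{1}{|Q^+|}\int_{Q^+}\theta^{-p_0}\Big)^{-1/p_0}.
\end{align*}
It therefore suffices to produce one exponent $p_0>0$, depending only on the admissible parameters, with
\begin{equation*}
\Big(\frac{1}{|Q^-|}\int_{Q^-}\theta^{p_0}\Big)\Big(\frac{1}{|Q^+|}\int_{Q^+}\theta^{-p_0}\Big)\les 1. \tag{$\star$}
\end{equation*}

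\emph{Step 2 (logarithmic estimate).} Put $w:=\log\theta$, so that $\p_t w - \Delta w + b\cdot\nabla w = -|\nabla w|^2$; thus $w$ is a subsolution and $-w$ a supersolution, each carrying a favorable quadratic term. Testing against $\phi^2$, where $\phi$ is a fixed spatial cutoff with $\phi\equiv 1$ on $B_{3/4}$ and $\operatorname{supp}\phi\subset B_{7/8}$, one obtains for a.e.\ $t$
\[
\frac{d}{dt}\int w\,\phi^2\,dx + \int |\nabla w|^2\phi^2\,dx \les \int|\nabla\phi|^2\,dx + \Big|\int w\,b\cdot\nabla(\phi^2)\,dx\Big|.
\]
The last term is the only non-classical ingredient. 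Using $\div b=0$ we write $b=\div d$ with $d$ antisymmetric and $\norm{d}_{L^2_tL^2_x(Q^*)}\les A^{1/2}$; one integration by parts together with the antisymmetry of $d$ (which annihilates the symmetric tensor $\p_i\p_j(\phi^2)$) gives $\int w\,b\cdot\nabla(\phi^2) = -\int d_{ij}(\p_i w)\p_j(\phi^2)$, and, after integrating in $t$, this is absorbed as $\tfrac14\iint|\nabla w|^2\phi^2 + C\norm{d}_{L^2_tL^2_x}^2\norm{\nabla\phi}_{L^\infty}^2$. This is the substance of Lemma~\ref{lem:universal}: one concludes $\iint_{Q^*}|\nabla w|^2\phi^2\les 1$ and that $t\mapsto\int w(\cdot,t)\phi^2\,dx$ is Lipschitz in $t$, with constants controlled by the~\eqref{eq:fbc} data, $A=\norm{b}_{L^2_tH^{-1}_x(Q^*)}^2$, and $\ell,T^*$.

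\emph{Step 3 (parabolic John--Nirenberg and conclusion).} Combining $\iint|\nabla w|^2\phi^2\les 1$, the Lipschitz bound on the $\phi^2$-average of $w$ in $t$, and the Poincar\'e inequality, the standard Moser argument — equivalently, the parabolic John--Nirenberg inequality of~\cite{moserharnack,fabesgarofalo}, or, following~\cite{moserpointwise}, a measure-theoretic iteration on the superlevel sets of $\pm w$ — produces a constant $\kappa$ (the $\phi^2$-average of $w$ at a time in the gap $(0,\ell/2)$, up to bounded error) and a universal $p_0>0$ with $\frac{1}{|Q^-|}\int_{Q^-}e^{p_0(w-\kappa)}\les 1$ and $\frac{1}{|Q^+|}\int_{Q^+}e^{p_0(\kappa-w)}\les 1$. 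Multiplying these two bounds is precisely $(\star)$. Chaining with Step 1,
\[
\sup_{B_{1/2}\times(-T^*+\ell,0)}\theta \les e^{\kappa} \les \inf_{B_{1/2}\times(\ell,T^*)}\theta,
\]
which is the assertion.

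\emph{Main obstacle.} The crux is Steps 2--3. Closing the logarithmic Caccioppoli estimate without losing a power of the (single, small) scale is exactly where $b\in L^2_tH^{-1}_x$ is used — its antisymmetric stream matrix $d\in L^2_{t,x}$ is what makes the first-order-in-$w$ drift term absorbable — and this is the content of Lemma~\ref{lem:universal}. Producing a \emph{single} constant $\kappa$ that simultaneously governs the backward and forward oscillation of $\log\theta$, with the time lag $\ell$ supplying the needed separation, and then running the covering/iteration argument, is the delicate part; the reduction in Step 1 and the input from Theorem~\ref{thm:localboundedness} are routine.
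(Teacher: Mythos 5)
Your Steps 1--2 are essentially sound and coincide with the paper's preparatory lemmas: $\theta^{-1}$ is indeed a positive Lipschitz subsolution, and the antisymmetric stream-matrix integration by parts is exactly how the drift term in the logarithmic Caccioppoli estimate is absorbed (the only imprecision is that the local representation is $b=-\div a+b_2$ with an additional divergence-free $L^2$ remainder $b_2$, which is handled by Cauchy--Schwarz; see Lemma~\ref{lem:decompositionofdrift}). The genuine gap is in Step 3. The exponential integrability $\frac{1}{|Q^{\mp}|}\int_{Q^{\mp}}e^{\pm p_0(w-\kappa)}\les 1$ does not follow from what Step 2 provides. Step 2 yields the logarithmic estimate only at a \emph{single} scale, with one fixed cutoff $\phi$: a global bound $\iint |\nabla w|^2\phi^2\les 1$ and Lipschitz control of the weighted average $w_{\rm avg}(t)$. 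The parabolic John--Nirenberg inequality of~\cite{moserharnack,fabesgarofalo} requires the log-Caccioppoli estimate on \emph{all} parabolic sub-cylinders (a parabolic BMO hypothesis on $\log\theta$); at scale $r$ about $x_0$ the drift contributes $r^{-2}\iint_{B_r\times I_r}|a|^2$, and $a\in L^2_{t,x}$ gives no scale-invariant (Morrey-type) control of this quantity --- nor does~\eqref{eq:fbc}, which is only assumed for $R\in[R_0/2,R_0]$. What the single-scale data \emph{does} give, via the superlevel-set/quadratic-ODE comparison, is only the weak-$L^1$ decay $|\{\pm(w-\kappa)>\mu\}|\les 1/\mu$, which is strictly weaker than the exponential decay you assert; your phrase ``measure-theoretic iteration on the superlevel sets'' conflates this weak-$L^1$ argument with the John--Nirenberg upgrade, and the latter is exactly the step that is not available here.

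Consequently the reduction in Step 1 to the two-sided bound $(\star)$ at a single exponent $p_0$ is the wrong target. The paper's route, which you should follow, is: establish the weak-$L^1$ bounds on $v_\pm$ where $v=\log(\theta/\mathbf{K})$ (Lemma~\ref{lem:weakl1est}), and then invoke Moser's abstract Lemma~3 of~\cite{moserpointwise} (quoted as Lemma~\ref{lem:moserlem}), which converts (i) local boundedness at \emph{all} small exponents $0<p<\zeta^{-1}$, with the power-type constant supplied by Theorem~\ref{thm:localboundedness}, together with (ii) the weak-$L^1$ bound on $\log w$, directly into $\sup w\les 1$ --- applied once to $w=\theta/\mathbf{K}$ on the backward region and once to $w=\mathbf{K}/\theta$ on the forward region, with $\mathbf{K}$ playing the role of your $e^\kappa$. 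This bypasses any exponential integrability of $\log\theta$. The paper's own remark flags precisely this point: the passage from the weak-$L^1$ inequality to a weak Harnack inequality is where the argument of~\cite{ignatovakukavicaryzhik} has a gap, and it is why the authors deliberately avoid the John--Nirenberg route.
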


\subsection{Verifying~\eqref{eq:fbc}}

We verify that~\eqref{eq:fbc} is satisfied in the setting of the main theorems:
\begin{proposition}[Verifying FBC]
\label{pro:stuffissatisfied}
Let $p,q,\beta,\gamma \in [1,+\infty]$, $\kappa \in (0,+\infty]$, and $b$ be a smooth, divergence-free vector field defined on $B_{R_0} \times I_0$.

1. If
\begin{equation}
	\label{eq:condition1}
	b \in L^q_t L^\beta_r L^\gamma_\sigma((B_{R_0} \setminus B_{R_0/2}) \times I_0), \quad \beta \geq \frac{n}{2}, \quad \zeta_0 := \frac{2}{q} + \frac{1}{\beta} + \frac{n-1}{\gamma} < 2,
\end{equation}
then $b$ satisfies~\eqref{eq:fbc} with $\alpha = 1/\theta_2$, $A = (\varrho,R)$, $\delta = 1$, $N = \varepsilon = 1/4$, and
\begin{equation}
	M = C \left( R_0^{1-\zeta_0} \| b \|_{L^q_t L^\beta_r L^\gamma_\sigma((B_{R_0} \setminus B_{R_0/2}) \times I_0)} \right)^{\frac{1}{\theta_2}} + \frac{1}{4},
\end{equation}
where $\theta_2 = 1-\zeta_0/2$.

2. If
\begin{equation}
	\label{eq:condition2}
	b \in L^\kappa_r L^q_t L^p_\sigma((B_{R_0} \setminus B_{R_0/2}) \times I_0), \quad p \leq q, \quad \zeta_0 := \frac{3}{q} + \frac{n-1}{p} < 2,
\end{equation}
then $b$ satisfies~\eqref{eq:fbc} with $\alpha = (1/\kappa + (q-1)/q)/\theta_2$, $\delta = 1/2$, $N = \varepsilon = 1/4$, and
\begin{equation}
	M = C \left( R_0^{1-\zeta_0} \| b \|_{L^\kappa_r L^q_t L^p_\sigma((B_{R_0} \setminus B_{R_0/2}) \times I_0)}\right)^{\frac{1}{\theta_2}} + \frac{1}{4},
\end{equation}
where $\theta_2 = 1-\zeta_0/2$.
\end{proposition}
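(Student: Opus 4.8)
The plan is to estimate the boundary term $-\frac{1}{|A|}\iint_{B_A\times I}\frac{|u|^2}{2}(b\cdot n)\,dx\,dt$ directly by choosing the slice set $A$ to be the set of radii $r$ on which a suitable sliced norm of $b$ is controlled, then applying the trace/Sobolev inequality on the sphere $\partial B_r$ together with Young's inequality to absorb the resulting $\nabla u$ and $\sup_t\int|u|^2$ contributions. I would begin with Part~1. On each sphere $\partial B_r$, write $\int_{\partial B_r}|u|^2\,|b|\,d\sigma$ and use H\"older in $\sigma$ to pull out $\|b(r,\cdot,t)\|_{L^\gamma_\sigma(\partial B_r)}$, leaving $\|u\|^2_{L^{2\gamma'}_\sigma(\partial B_r)}$ where $\gamma'$ is the conjugate exponent. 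Since $\frac{n-1}{\gamma}<2$ forces $2\gamma' < \frac{2(n-1)}{n-3}$ (the critical Sobolev exponent on the $(n-1)$-sphere), the Sobolev/Gagliardo--Nirenberg inequality on $\partial B_r$ interpolates $\|u\|^2_{L^{2\gamma'}_\sigma}$ between $\|\nabla_\sigma u\|_{L^2_\sigma}$-type quantities and $\|u\|_{L^2_\sigma}$; combined with the normal-direction slicing (choosing $A$ via Chebyshev so that $\|\nabla u\|_{L^2(\partial B_r\times I)}$, $\|u\|_{L^2(\partial B_r\times I)}$, and $\|b(r,\cdot,\cdot)\|_{L^q_t L^\gamma_\sigma}$ are all simultaneously $\lesssim$ their integrals over $(B_R\setminus B_\varrho)\times I$ divided by $(R-\varrho)$) and H\"older in $t$ using $\frac{2}{q}<2$, one arrives at a bound of the form (small constant)$\times$($\|\nabla u\|^2_{L^2}$ + $\sup_t\int|u|^2$) plus a constant $M$ times $(R-\varrho)^{-\alpha}$ times $\|u\|^2_{L^2}$, with the exponents $\alpha = 1/\theta_2$ and the power of $\|b\|$ emerging from tracking scaling. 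The condition $\beta\ge n/2$ enters when distributing the $u^2$ weight and the radial integrability of $b$.

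For Part~2, the order of integration is reversed: $b\in L^\kappa_r L^q_t L^p_\sigma$. Here I would first slice in $r$ directly — since $b\in L^\kappa_r(\cdots)$ for \emph{any} $\kappa>0$, Chebyshev on the function $r\mapsto \|b(r,\cdot,\cdot)\|_{L^q_t L^p_\sigma}$ gives a set $A$ of measure $\ge \tfrac12(R-\varrho)$ on which this quantity is $\lesssim (R-\varrho)^{-1/\kappa}\|b\|_{L^\kappa_r L^q_t L^p_\sigma}$ (this is why only $\delta=1/2$ and a $1/\kappa$-dependent $\alpha$ appear, and why no lower bound on $\kappa$ is needed). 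On each good sphere, apply H\"older in $\sigma$ against $L^p_\sigma$, then H\"older in $t$ against $L^q_t$ — this is where $p\le q$ is used, to ensure the time exponent left over for $\|u\|^2_{L^\cdot_t L^\cdot_\sigma}$ is manageable — and finally trace plus Gagliardo--Nirenberg on $\partial B_r$ and in the $(\sigma,t)$ variables, using $\frac{3}{q}+\frac{n-1}{p}<2$ to guarantee the interpolation is subcritical. Young's inequality again splits off the absorbable $\nabla u$ and $\sup_t\int|u|^2$ pieces; the residual power $(R-\varrho)^{-\alpha}$ with $\alpha = (1/\kappa+(q-1)/q)/\theta_2$ comes from combining the radial Chebyshev loss ($1/\kappa$) with the loss from the time H\"older step ($(q-1)/q$), both amplified by $1/\theta_2$ through Young.

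In both parts the scaling-invariant combination $R_0^{1-\zeta_0}\|b\|$ is kept intact by a preliminary rescaling to $R_0=1$ (tracking powers of $R_0$ via dimensional analysis as the excerpt suggests), and the final form $M = C(R_0^{1-\zeta_0}\|b\|)^{1/\theta_2}+\tfrac14$ is produced by the Young inequality step that trades the small constant $\varepsilon=N=1/4$ against a large constant on the $\|u\|^2_{L^2}$ term, with $\theta_2 = 1-\zeta_0/2$ being exactly the H\"older/Young exponent dictated by the deficit $2-\zeta_0$. I expect the main obstacle to be the bookkeeping in the anisotropic slicing and interpolation: one must verify that after fixing $A$ by a single Chebyshev argument, \emph{all} the relevant sliced norms of $b$, $u$, and $\nabla u$ are simultaneously controlled on a common set of radii of the required measure, and that the Sobolev exponents on $\partial B_r$ stay in the admissible range precisely under the stated strict inequalities on $\zeta_0$ — the endpoint behavior as $\zeta_0\to 2$ (where $\theta_2\to0$, $\alpha\to\infty$) is where the argument is most delicate and must be handled by an explicit interpolation rather than a soft compactness argument.
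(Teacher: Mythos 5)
Your overall architecture (Hölder against $b$ on spheres, sphere Sobolev embedding, Chebyshev slicing, Young's inequality, and the scaling bookkeeping giving $\theta_2=1-\zeta_0/2$ and the stated $\alpha$'s) matches the paper's, and your Part 2 slicing of $b$ via Chebyshev on $r\mapsto\|b\|^{\kappa}_{L^q_tL^p_\sigma(\partial B_r\times I)}$ is exactly what the paper does. But there is a genuine gap in how you treat $u$. You propose to select good slices for $u$ as well and then run the trace/Gagliardo--Nirenberg interpolation \emph{per sphere}, in the $(\sigma,t)$ variables on $\partial B_r\times I$. Whenever $q<\infty$, the Hölder step against $b\in L^q_t$ leaves you needing $\|u\|^2_{L^{2q'}_tL^{2\gamma'}_\sigma(\partial B_r\times I)}$ with $2q'>2$, and the only route from the energy class to time integrability above $2$ is interpolation with $L^\infty_tL^2$. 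On a single sphere this requires $\sup_{t\in I}\int_{\partial B_r}|u|^2\,d\sigma$, and this quantity does \emph{not} slice: Chebyshev in $r$ controls $\int_A$ of it by $\frac{1}{R-\varrho}\int_r\sup_t(\cdots)\,dr$, which is not bounded by $\frac{1}{R-\varrho}\sup_t\int_{B_R}|u|^2\,dx$ (the order of $\sup_t$ and $\int_r$ cannot be exchanged, and the bad set of radii may depend on $t$). This is precisely the obstruction the paper flags in its discussion of the dimension reduction principle --- ``$L^\infty_tL^2_x$ does not restrict well to slices in $r$'' --- and it is why the naive parabolic generalization of the elliptic (Kontovourkis) slicing fails. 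Your plan works as written only in the steady/$q=\infty$ case.

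The paper's fix is to never interpolate $u$ on individual spheres: it first proves a \emph{global} mixed-norm interpolation on the annulus, $\|u\|_{L^{q_2}_tL^{\beta_2}_rL^{\gamma_2}_\sigma}\lesssim\|u\|_{L^2}^{\theta_2}(\|u\|_{L^2}+\|\nabla u\|_{L^2}+\|u\|_{L^\infty_tL^2_x})^{1-\theta_2}$, and then pairs this with $b$ by Hölder in $r$ over $A$ (for Part 1, $A=(\varrho,R)$ with no Chebyshev at all, pairing $L^\beta_r$ with $L^{\beta_2/2}_r$; for Part 2, choosing $\beta_2=q_2$ so that $L^{q_2}_tL^{\beta_2}_r=L^{\beta_2}_rL^{q_2}_t$ and only $b$ is confined to good slices). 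You should restructure your argument this way. Two secondary points: (i) your sphere Sobolev exponent $2^*_{n-1}=\frac{2(n-1)}{n-3}$ is meaningless for $n=2,3$; the paper handles $n=2$ separately via $H^1(\partial B_r)\hookrightarrow C^{1/2}$ and a Besov interpolation, which your proposal omits; (ii) using Chebyshev in Part 1 would yield $\delta<1$, whereas the proposition asserts $\delta=1$ and $A=(\varrho,R)$ --- harmless for the downstream use of \eqref{eq:fbc} but not the statement as written.
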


\begin{corollary}[FBC in $L^p_x L^q_t$]
Let $p,q \in [1,+\infty]$ and $b$ be as above. If
\begin{equation}
	\label{eq:condition3}
	b \in L^p_x L^q_t((B_{R_0} \setminus B_{R_0/2}) \times I_0), \quad p \leq q, \quad \zeta_0 := \frac{3}{q} + \frac{n-1}{p} < 2,
\end{equation}
then $b$ satisfies~\eqref{eq:fbc} with $\alpha = (1/p + (q-1)/q)/\theta_2$, $\delta = 1/2$, $N = \varepsilon = 1/4$, and
\begin{equation}
	M = C \left( R_0^{1-\zeta_0} \| b \|_{L^p_x L^q_t((B_{R_0} \setminus B_{R_0/2}) \times I_0)}\right)^{\frac{1}{\theta_2}} + \frac{1}{4},
\end{equation}
where $\theta_2 = 1-\zeta_0/2$.
\end{corollary}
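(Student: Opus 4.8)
The plan is to derive this Corollary as the special case $\kappa = p$ of the second case of Proposition~\ref{pro:stuffissatisfied}. The only thing to check is that the hypothesis $b \in L^p_x L^q_t$ on the annulus, combined with $p \le q$, implies $b \in L^p_r L^q_t L^p_\sigma$ on the annulus, with the norm inequality
\[
	\| b \|_{L^p_r L^q_t L^p_\sigma((B_{R_0}\setminus B_{R_0/2})\times I_0)} \le \| b \|_{L^p_x L^q_t((B_{R_0}\setminus B_{R_0/2})\times I_0)}.
\]
Once this is in hand, condition~\eqref{eq:condition3} forces condition~\eqref{eq:condition2} with $\kappa = p$ (and with $\zeta_0$, $\theta_2$ unchanged), so~\eqref{eq:fbc} holds with the parameters furnished by that case of the proposition; specializing $\kappa = p$ there produces exactly $\alpha = (1/p + (q-1)/q)/\theta_2$, $\delta = 1/2$, $N = \varepsilon = 1/4$, and $M = C(R_0^{1-\zeta_0}\| b\|_{L^p_r L^q_t L^p_\sigma})^{1/\theta_2} + \tfrac14$, which by the displayed norm inequality and $\theta_2 = 1 - \zeta_0/2 > 0$ is dominated by $C(R_0^{1-\zeta_0}\| b\|_{L^p_x L^q_t})^{1/\theta_2} + \tfrac14$. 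Since~\eqref{eq:fbc} only becomes easier to satisfy when its constant $M$ is enlarged (it enters the right-hand side with a positive coefficient), $b$ satisfies~\eqref{eq:fbc} with the constants asserted in the Corollary.

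For the norm inequality itself, I first pass to polar coordinates $x = r\omega$, $r \in (R_0/2, R_0)$, using the surface measure $d\sigma$ on $\partial B_r$ so that $dx = d\sigma\, dr$ with no Jacobian weight; this is the normalization implicit in the proposition, and it is consistent with the dimensional count behind $R_0^{1-\zeta_0}\| b\|$ being dimensionless. Then for any exponent $s$ one has $\| f\|_{L^s_x} = \big\| \|f\|_{L^s_\sigma}\big\|_{L^s_r}$, and since the two outermost norms in $\| b\|_{L^p_x L^q_t}$ share the exponent $p$ they merge: $\| b\|_{L^p_x L^q_t} = \big\| \big\| \|b\|_{L^q_t}\big\|_{L^p_\sigma}\big\|_{L^p_r} = \| b\|_{L^p_r L^p_\sigma L^q_t}$. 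The key estimate is then Minkowski's integral inequality applied fiberwise in $r$: because $p \le q$,
\[
	\big\| \|b(r,\cdot,\cdot)\|_{L^p_\sigma}\big\|_{L^q_t} \le \big\| \|b(r,\cdot,\cdot)\|_{L^q_t}\big\|_{L^p_\sigma} \qquad \text{for a.e. } r,
\]
and applying $\|\cdot\|_{L^p_r}$ to both sides gives $\| b\|_{L^p_r L^q_t L^p_\sigma} \le \| b\|_{L^p_r L^p_\sigma L^q_t} = \| b\|_{L^p_x L^q_t}$, as claimed.

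There is no substantive obstacle here: the entire content is the correct direction of Minkowski's inequality — which is precisely why the restriction $p \le q$ appears in the statement — together with routine bookkeeping of the order of integration in the mixed norms and the (harmless) choice of measure normalization on the spheres $\partial B_r$. The elliptic reader may note that taking $p = q$ above recovers the isotropic condition $b \in L^{\frac{n+1}{2}+}_{t,x}$ on the annulus, while the genuine gain over this is encoded entirely in the second case of Proposition~\ref{pro:stuffissatisfied}, not in the present reduction.
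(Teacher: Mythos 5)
Your argument is correct and is precisely the paper's proof of this corollary (the paper disposes of it in one line: by Minkowski's inequality, \eqref{eq:condition3} is a special case of \eqref{eq:condition2} with $\kappa = p$), with the direction of Minkowski's inequality and the polar-coordinate bookkeeping both handled properly. One cosmetic slip in your closing aside: taking $p=q$ gives $\zeta_0 = \frac{n+2}{p} < 2$, i.e.\ the isotropic condition $b \in L^{\frac{n+2}{2}+}_{t,x}$, not $L^{\frac{n+1}{2}+}_{t,x}$.
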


By Minkowski's inequality,~\eqref{eq:condition3} is a special case of~\eqref{eq:condition2} with $\kappa=p$.

\begin{remark}
\label{rmk:remarkaboutendpoints}
Condition~\eqref{eq:condition1} automatically enforces $q \in (1,+\infty]$ and $\gamma \in \left( \frac{n-1}{2},+\infty \right]$. Condition~\eqref{eq:condition2} automatically enforces $q \in \left( \frac{n+2}{2} ,+\infty \right]$ and $p \in \left( \frac{n-1}{2}, +\infty \right]$.
\end{remark}

\begin{proof}[Proof of Proposition~\ref{pro:stuffissatisfied}]
First, we rescale $R_0 = 1$. Let $1/2 \leq \varrho < R \leq 1$, $I \subset I_0$. We restrict to $n \geq 3$ and summarize $n=2$ afterward. Unless stated otherwise, the norms below are on $(B_R \setminus B_\varrho) \times I$.

\textit{1. Summary of embeddings for $u$}. By the Sobolev embedding theorem, we have
\begin{equation}
	\| u \|_{L^2_t L^{2^*_n}_x} \les \| u \|_{L^2} + \| \nabla u \|_{L^2},\quad 2^*_n=\frac{2n}{n-2}.
\end{equation}
After interpolating with $\| u \|_{L^\infty_t L^2_x}$ and $\| u \|_{L^2_{t,x}}$, we have
\begin{equation}
	\label{eq:firstinterpolation}
	\| u \|_{L^{q_1}_t L^{p_1}_x} \les \| u \|_{L^2}^{\theta_1} (\| u \|_{L^2} + \| \nabla u \|_{L^2} + \| u \|_{L^\infty_t L^2_x})^{1-\theta_1}
\end{equation}
for suitable $\theta_1 = \theta_1(q_1,p_1) \in (0,1)$ whenever
\begin{equation}
	\frac{2}{q_1} + \frac{n}{p_1} > \frac{n}{2}, \quad q_1 \in [2,+\infty), \quad p_1 \in [2,2^*_n).
\end{equation}

Next, we employ the Sobolev embedding theorem on the spheres $\p B_r$, $r \in (\varrho,R)$:
\begin{equation}
	\label{eq:sobolevembeddingspheres}
	\| u \|_{L^2_t L^2_r L^{2^*_{n-1}}_\sigma} \les \| u \|_{L^2} + \| \nabla u \|_{L^2}.
	\end{equation}
Interpolating with~\eqref{eq:firstinterpolation}, we have
\begin{equation}
	\label{eq:uinterpineq}
	\| u \|_{L^{q_2}_t L^{\beta_2}_r L^{\gamma_2}_\sigma} \les \| u \|_{L^2}^{\theta_2} (\| u \|_{L^2} + \| \nabla u \|_{L^2} + \| u \|_{L^\infty_t L^2_x})^{1-\theta_2}
\end{equation}
for suitable $\theta_2 = \theta_2(q_2,\beta_2,\gamma_2) \in (0,1)$ whenever
\begin{equation}
	\label{eq:q2belonging}
	\frac{2}{q_2} + \frac{1}{\beta_2} + \frac{n-1}{\gamma_2} > \frac{n}{2}, \quad q_2 \in [2,+\infty), \quad \beta_2 \in [2,2^*_n), \quad \gamma_2 \in [2,2^*_{n-1}).
\end{equation}
The condition~\eqref{eq:q2belonging} describes a region in the parameter space $\{ (1/q_2,1/\beta_2,1/\gamma_2) \} \subset \R^3$ in a tetrahedron with vertices $(1/2,1/2,1/2)$, $(1/2,1/2^*_n,1/2^*_n)$, $(1/2,1/2,1/2^*_{n-1})$, and $(0,1/2,1/2)$. We compute $\theta_2$ according to
\begin{equation}
	\label{eq:computetheta2}
	\theta_2 \left( \frac{n+2}{2} \right) + (1-\theta_2) \left(  \frac{n+2}{2} - 1 \right) = \frac{2}{q_2} + \frac{1}{\beta_2} + \frac{n-1}{\gamma_2}.
\end{equation}

\textit{2. Verifying~\eqref{eq:fbc} for condition~\eqref{eq:condition1}}. Choose $q_2/2 = q'$ and $\beta_2/2 = \beta'$, where $'$ denotes H{\"o}lder conjugates. This is admissible according to the restrictions on $q$, $\beta$, $q_2$, and $\beta_2$ described in~\eqref{eq:condition1}, Remark~\ref{rmk:remarkaboutendpoints}, and~\eqref{eq:q2belonging}. We choose $\gamma_2$ to satisfy
\begin{equation}
	\label{eq:thingtounfold}
	\left( \frac{2}{q} + \frac{1}{\beta} + \frac{n-1}{\gamma} \right) + 2 \left( \frac{2}{q_2} + \frac{1}{\beta_2} + \frac{n-1}{\gamma_2} \right) = n + 2.
\end{equation}
This is possible according to the numerology in~\eqref{eq:condition1} and~\eqref{eq:q2belonging}. Unfolding~\eqref{eq:thingtounfold}, we find $\gamma_2/2 = \gamma'$.  Finally, we choose $A = (\varrho,R)$ and compute $\theta_2$ according to~\eqref{eq:computetheta2}. Then
\begin{equation}
	\label{eq:computingforcondition1}
\begin{aligned}
	& \left| \frac{1}{|A|} \iint_{B_A \times I} \frac{|u|^2}{2} b \cdot n \, dx \, dt \right| \leq \frac{1}{R - \varrho} \| b \|_{L^q_t L^{\beta}_r L^{\gamma}_\sigma} \| u \|_{L^{q_2}_t L^{\beta_2}_r L^{\gamma_2}_\sigma}^2 \\
	&\quad\overset{\eqref{eq:uinterpineq}}{\leq} \frac{C}{R - \varrho} \| b \|_{L^q_t L^{\beta}_r L^{\gamma}_\sigma} \| u \|_{L^2}^{2\theta_2} (\| u \|_{L^2} + \| \nabla u \|_{L^2} + \| u \|_{L^\infty_t L^2_x})^{2-2\theta_2} \\
	&\quad\quad\leq C \frac{\| b \|_{L^q_t L^{\beta}_r L^{\gamma}_\sigma}^{\frac{1}{\theta_2}} }{(R-\varrho)^{\frac{1}{\theta_2}}} \| u \|_{L^2}^2 + \frac{1}{4} ( \| u \|_{L^2}^2 + \| \nabla u \|_{L^2}^2 + \| u \|_{L^\infty_t L^2_x}^2 ),
	\end{aligned}
\end{equation}
where we used Young's inequality in the last step. This implies~\eqref{eq:fbc}.

\textit{3. Verifying~\eqref{eq:fbc} for condition~\eqref{eq:condition2}}. First, we identify \emph{good slices} for $b$. Specifically, we apply Chebyshev's inequality in $r$ to the integrable function
	\begin{equation}
	r \mapsto \big\lVert b|_{I \times \p B_r} \big\rVert_{L^q_t L^p_\sigma (I \times \p B_r)}^\kappa
\end{equation}
to obtain that, on a set $A = A(\varrho,R,I)$ of measure $|A| \geq (R-\varrho)/2$, we have
\begin{equation}
	\label{eq:postcheby}
	\norm{b}_{L^\infty_r L^q_t L^p_\sigma(B_{A} \times I)} \les \frac{1}{(R-\varrho)^{\frac{1}{\kappa}}} \norm{b}_{L^\kappa_r L^q_t L^p_\sigma((B_R \setminus B_\varrho) \times I)}
\end{equation}
where $B_A = \cup_{r \in A} \p B_r$.

Now we choose $q_2/2 = q'$, $\beta_2 = q_2$, and $\gamma_2 \in [2,2^*_{n-1})$ satisfying~\eqref{eq:q2belonging} and
\begin{equation}
	\label{eq:thingtounfold2}
	\left( \frac{3}{q} + \frac{n-1}{p} \right) + 2 \left( \frac{3}{q_2} + \frac{n-1}{\gamma_2} \right) = n + 2.
\end{equation}
Unfolding~\eqref{eq:thingtounfold2}, we discover $\gamma_2/2 = p'$. This allows us to compute $\theta_2$ according to~\eqref{eq:computetheta2}. Moreover, since $q_2 = \beta_2$ in~\eqref{eq:q2belonging}, we have $L^{q_2}_t L^{\beta_2}_r L^{\gamma_2}_\sigma =  L^{\beta_2}_r L^{q_2}_t L^{\gamma_2}_\sigma$. Then
\begin{equation}
	\label{eq:computingforcondition2}
\begin{aligned}
	& \left| \frac{1}{|A|} \iint_{B_A \times I} \frac{|u|^2}{2} b \cdot n \, dx \, dt \right| \leq \frac{C}{R-\varrho} \| b \|_{L^\infty_r L^q_t  L^{p}_\sigma} (R - \varrho)^{1 - \frac{2}{\beta_2}} \| u \|_{L^{\beta_2}_r L^{q_2}_t  L^{\gamma_2}_\sigma}^2 \\
	&\quad \overset{\eqref{eq:uinterpineq}}{\leq}  \frac{C}{(R-\varrho)^{\frac{1}{\kappa} + \frac{2}{\beta_2}} }\norm{b}_{L^\kappa_r L^q_t L^p_\sigma((B_R \setminus B_\varrho) \times I)} \| u \|_{L^2}^{2 \theta_2} (\| u \|_{L^2} + \| \nabla u \|_{L^2} + \| u \|_{L^\infty_t L^2_x})^{2(1-\theta_2)} \\
	&\quad\quad \leq C \frac{\norm{b}_{L^\kappa_r L^q_t L^p_\sigma((B_1 \setminus B_{1/2}) \times I)}^{\frac{1}{\theta_2}}}{(R-\varrho)^{\frac{1}{\theta_2} (\frac{1}{\kappa} + \frac{2}{\beta_2})}} \| u \|_{L^2}^2 +  \frac{1}{4} ( \| u \|_{L^2}^2 + \| \nabla u \|_{L^2}^2 + \| u \|_{L^\infty_t L^2_x}^2 ).
	\end{aligned}
\end{equation}
This completes the proof of~\eqref{eq:fbc}.

\textit{4. Dimension $n=2$}. The Sobolev embedding~\eqref{eq:sobolevembeddingspheres} on the sphere bounds instead 
\begin{equation*}
	 \| u \|_{L^2_t L^2_r C^{1/2}_\sigma} \les \| u \|_{L^2} + \| \nabla u \|_{L^2},
\end{equation*}
so we must adjust the proof of the interpolation~\eqref{eq:uinterpineq}. After the initial interpolation step between $L^2_t L^{2^*_n}_x$, $L^\infty_t L^2_x$ and $L^2_{t,x}$ (where $2^*_n$ is now any large finite number), we apply the following Gagliardo-Nirenberg inequality on spheres:
\begin{equation}
	\label{eq:gagnironsphere}
	\| u \|_{L^q(B_r)} \les \| u \|_{L^p(B_r)}^\theta \| u \|_{C^{1/2}(B_r)}^{1-\theta}, \quad - \frac{n-1}{q} = \theta \left( - \frac{n-1}{p} \right) + \frac{1-\theta }{2},
\end{equation}
where  $p,q \in [1,+\infty]$, $\theta \in (0,1]$, and $r \in [1/2,1]$. This allows us to recover~\eqref{eq:uinterpineq}, where now $\gamma_2 = +\infty$ is also allowed, and complete the proof.

To prove~\eqref{eq:gagnironsphere}, we first use local coordinates on the sphere and a partition of unity\footnote{Alternatively, since $n=2$, we could argue on the flat torus without a partition of unity. The argument we present here is more general.} to reduce to functions $f$ on $\R^{n-1}$. Next, we use that $L^p \subset B^{-\frac{n-1}{p}}_{\infty,\infty}$, $C^{1/2} = B^{1/2}_{\infty,\infty}$, and real interpolation
\begin{equation}
	[B^{-\frac{n-1}{p}}_{\infty,\infty},B^{1/2}_{\infty,\infty}]_{\theta,1} = B^0_{q,1} \subset L^q
\end{equation}
to demonstrate
\begin{equation}
	\| f \|_{L^q} \les \| f \|_{L^p(B_r)}^\theta \| f \|_{C^{1/2}(B_r)}^{1-\theta} \les \varepsilon^{\frac{1}{\theta}} \| f \|_{L^p(B_r)} + \varepsilon^{-\frac{1}{1-\theta}} \| f \|_{C^{1/2}(B_r)}.
\end{equation}
We piece together $u$ from the functions $f$ and optimize in $\varepsilon$ to obtain~\eqref{eq:gagnironsphere}.
\end{proof}

\begin{remark}[On $\tilde{\text{FBC}}$]
\label{rmk:trackingparametervarepsilon0}
For pointwise upper bounds on fundamental solutions in Section~\ref{sec:upperboundsonfundamentalsolutions}, we require a global, revised form boundedness condition~\eqref{eq12.33}, in which we allow $\varepsilon$ to vary in $(0,+\infty)$. We refine~\eqref{eq:computingforcondition1} by keeping track of the $\varepsilon$ dependence in Hardy's inequality:
\begin{equation}
\begin{aligned}
	& \left| \frac{1}{|A|} \iint_{B_A \times I} \frac{|u|^2}{2} b \cdot n \, dx \, dt \right| \leq \frac{C \| b \|_{L^q_t L^{\beta}_r L^{\gamma}_\sigma((B_R \setminus B_\varrho) \times I)}^{\frac{1}{\theta_2}} }{\varepsilon^{\frac{1-\theta_2}{\theta_2}} (R-\varrho)^{\frac{1}{\theta_2}}} \| u \|_{L^2}^2 + \varepsilon \left( \| u \|_{L^2}^2 + \| \nabla u \|_{L^2}^2 + \| u \|_{L^\infty_t L^2_x}^2 \right),
	\end{aligned}
\end{equation}
A similar refinement holds in~\eqref{eq:computingforcondition2}. We require these refinements in the justification of Lemma~\ref{lem:verifyingtildefbc}.
\end{remark}

\subsection{Proof of local boundedness}

To begin, we prove Cacciopoli's inequality:
\begin{lemma}[Cacciopoli inequality]
	\label{lem:cacciopoli}
	Under the hypotheses of Theorem~\ref{thm:localboundedness},
	\begin{equation}
	\label{eq:cacciopoliinequality}
		\sup_{t \in (\tau,0)} \int_{B_r} |\theta(x,t)|^2 \, dx + \iint_{Q_{\varrho,\tau}} |\nabla \theta|^2 \, dx\, dt \les_{N,\alpha,\varepsilon} \mathbf{C} \iint_{Q_{R,T} \setminus Q_{\varrho,\tau}} |\theta|^2 \, dx\, dt.
\end{equation}
\end{lemma}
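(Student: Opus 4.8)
The plan is the usual parabolic energy estimate; the only non-routine point is the spatial cutoff, which must be adapted to the ``good slice'' set $A$ supplied by~\eqref{eq:fbc} so that the drift term lands \emph{exactly} on the left-hand side of~\eqref{eq:fbc}. First fix a Lipschitz temporal cutoff $\psi$ with $\psi\equiv1$ on $(\tau,0)$, $\operatorname{supp}\psi\subset(T,0)$, $|\psi'|\les(\tau-T)^{-1}$. Given admissible scales $\varrho\le\rho<\rho'\le R$, $T\le\sigma'<\sigma\le\tau$, and a free upper time $t_0\in(\sigma,0)$, apply~\eqref{eq:fbc} with inner/outer radii $\rho,\rho'$, interval $(\sigma',t_0)$, and $u=\theta\psi$ to obtain $A=A(\rho,\rho',(\sigma',t_0),\theta\psi)\subset(\rho,\rho')$ with $|A|\ge\delta(\rho'-\rho)$. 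Then set $\eta=\eta(|x|)$ to be $\equiv1$ on $B_\rho$, $\equiv0$ off $B_{\rho'}$, and $(\eta^2)'(r)=-|A|^{-1}\chi_A(r)$ in between: thus $0\le\eta\le1$ and $\nabla(\eta^2)=-|A|^{-1}\chi_A(|x|)\,n$, bounded by $(\delta(\rho'-\rho))^{-1}$; note $\eta^2$ --- which is all that appears --- is Lipschitz even though $\eta$ itself is not near $\partial B_{\rho'}$, and there is no circularity since $\theta\psi$ is fixed before $\eta$ is built.

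Testing the subsolution inequality against $\theta\eta^2\psi^2\ge0$ over $B_{\rho'}\times(\sigma',t_0)$ (legitimate for a Lipschitz subsolution with a test function not vanishing at $t=0$, after the standard Steklov regularization), integrating by parts, and using $\div b=0$ and $\eta|_{\partial B_{\rho'}}=0$, the drift term equals $|A|^{-1}\iint_{B_A\times(\sigma',t_0)}\tfrac12(\theta\psi)^2(b\cdot n)$, so that after rearrangement its negative --- precisely the left-hand side of~\eqref{eq:fbc} with $u=\theta\psi$, $I=(\sigma',t_0)$ --- appears on the right and is bounded by~\eqref{eq:fbc}. Bounding also the $\psi\psi'$ term by $\les(\sigma-\sigma')^{-1}\norm{\theta}_{L^2(Q_{R,T}\setminus Q_{\varrho,\tau})}^2$ and the cross term $\iint\theta\psi^2\nabla\theta\cdot\nabla(\eta^2)$ by Young's inequality with a small parameter $\epsilon_1$, one arrives at
\[
\tfrac12\!\int_{B_{\rho'}}\!\theta(t_0)^2\eta^2\psi(t_0)^2+\iint|\nabla\theta|^2\eta^2\psi^2\;\le\;C\,\mathbf C\,\norm{\theta}_{L^2(Q_{R,T}\setminus Q_{\varrho,\tau})}^2+(N+\tfrac{\epsilon_1}{2})\!\!\iint_{(B_{\rho'}\setminus B_\rho)\times(\sigma',0)}\!\!\!|\nabla\theta|^2+\varepsilon\sup_{t\in(\sigma',0)}\int_{B_{\rho'}}\theta^2,
\]
where $\mathbf C=\mathbf C(\rho,\sigma,\rho',\sigma')$ and, crucially, after using $\psi\le1$ and $(\sigma',t_0)\subset(\sigma',0)$ the right-hand side no longer depends on $t_0$. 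Taking $\sup_{t_0\in(\sigma,0)}$ in the first term on the left (where $\eta=1$, $\psi(t_0)=1$) and $t_0\to0^-$ in the gradient term, then adding, shows that $\mathcal F(\rho,\sigma):=\sup_{t\in(\sigma,0)}\int_{B_\rho}\theta(t)^2+\iint_{B_\rho\times(\sigma,0)}|\nabla\theta|^2$ is bounded by a constant (tending to $3$ as $\epsilon_1\to0$) times the right-hand side above.

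What remains is to absorb the annular gradient term and the supremum term. These live on the outer annulus, where the weight $\eta^2$ degenerates, so they cannot be absorbed at a single scale; instead I use hole-filling together with iteration over the sub-cylinders. Adding a suitable multiple of $\iint_{B_\rho\times(\sigma,0)}|\nabla\theta|^2$ to both sides and merging the annular with the interior gradient integral via $\iint_{(B_{\rho'}\setminus B_\rho)\times(\sigma',0)}+\iint_{B_\rho\times(\sigma',0)}=\iint_{B_{\rho'}\times(\sigma',0)}$ converts the previous inequality into $\tilde{\mathcal F}(\rho,\sigma)\le\lambda\,\tilde{\mathcal F}(\rho',\sigma')+C\,\mathbf C(\rho,\sigma,\rho',\sigma')\norm{\theta}_{L^2(Q_{R,T}\setminus Q_{\varrho,\tau})}^2$ for all admissible scales, with $\tilde{\mathcal F}$ the evident reweighting of $\mathcal F$ and $\lambda=\max\!\big(\tfrac{3(N+\epsilon_1/2)}{1+3(N+\epsilon_1/2)},\,3\varepsilon\big)$, which is $<1$ for $\epsilon_1$ small since $N=\varepsilon=\tfrac14$. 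Parametrizing the scales linearly, $s\mapsto(\rho(s),\sigma(s))$ running from $(\varrho,\tau)$ to $(R,T)$, reduces this to $g(s)\le\lambda g(s')+C'\,(s'-s)^{-\max(2,\alpha)}\mathbf C(\varrho,\tau,R,T)\norm{\theta}_{L^2(Q_{R,T}\setminus Q_{\varrho,\tau})}^2$ for all $0\le s<s'\le1$, and the classical iteration lemma for recursions with ratio $<1$ gives $\tilde{\mathcal F}(\varrho,\tau)\les_{N,\alpha,\varepsilon}\mathbf C(\varrho,\tau,R,T)\norm{\theta}_{L^2(Q_{R,T}\setminus Q_{\varrho,\tau})}^2$, which contains~\eqref{eq:cacciopoliinequality}.

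The main obstacle is this final absorption: because~\eqref{eq:fbc} returns a gradient term and an $L^\infty_tL^2_x$ term \emph{on the outer annulus}, one cannot close at a single scale and must push the constants through the iteration with some care --- with only $N=\varepsilon=\tfrac14$ available, the supremum term forces the combining constant to be kept below $3$ (equivalently $3\varepsilon<1$), which is exactly what guarantees $\lambda<1$. A smaller but genuine subtlety is that the good-slice set $A$, and hence $\eta$, depends on the free time $t_0$; one must check that the estimate after~\eqref{eq:fbc} is uniform in $t_0$ before taking the supremum, which holds because all $t_0$-dependence is confined to integrals over $(\sigma',t_0)\subset(\sigma',0)$.
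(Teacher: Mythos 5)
Your proof is correct and follows essentially the same route as the paper's: the paper averages the boundary integrals over $r \in A$ directly, whereas you encode that average in a cutoff with $(\eta^2)'=-|A|^{-1}\chi_A$, but the resulting identity for the drift term, the application of~\eqref{eq:fbc}, the Young-inequality treatment of the cross term, and the hole-filling plus iteration over scales are the same. One small quantitative point: your equal-weight addition of the supremum and gradient bounds yields the absorption condition $3\varepsilon<1$, while the hypothesis only guarantees $\varepsilon<1/2$; retaining the natural coefficient $\tfrac12$ on the supremum term (as the paper does, arriving at $\gamma=\max\{(N+1)/(N+2),\,2\varepsilon\}$) recovers the full stated range.
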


\begin{proof}[Proof of Lemma~\ref{lem:cacciopoli}]
 Let $\eta \in C^\infty_0(T,+\infty)$ satisfying $0 \leq \eta \leq 1$ on $\R$, $\eta \equiv 1$ on $(\tau,+\infty)$ and $0 \leq d\eta/dt \les 1/(\tau-T)$. Let $r \in (\varrho,R)$ and $t \in (\tau,0)$. To begin, we multiply by $\theta \eta^2$ and integrate over $B_r \times (T,t)$:
\begin{equation}
\begin{aligned}
	&\frac{1}{2} \int_{B_r} |\theta(x,t)|^2 \, dx + \iint_{B_r \times (\tau,t)} |\nabla \theta|^2 \, dx\, ds \\
	&\quad \leq \iint_{B_R \times (T,t)} |\theta|^2 \frac{d\eta}{ds} \eta \, dx\, ds
	+ \iint_{\p B_r \times (T,t)} \Big(\frac{d\theta}{dn} \theta \eta^2 - \frac{\theta^2}{2} (b \cdot n) \eta^2 \Big)\, d\sigma \, ds.
	\end{aligned}
\end{equation}
Next, we average in the $r$ variable over the set of `good slices', $A = A(\varrho,R,(T,t),\theta \eta)$, which was defined in~\eqref{eq:fbc}:
\begin{equation}
\begin{aligned}
	&\frac{1}{2} \int_{B_\varrho} |\theta(x,t)|^2 \, dx + \iint_{B_\varrho \times (\tau,t)} |\nabla \theta|^2 \, dx\, ds \\
	&\quad \leq \frac{C}{\tau-T} \iint_{B_R \times (T,\tau)} |\theta|^2 \, dx\, ds + \frac{1}{|A|} \iint_{B_A \times (T,t)}\Big( \frac{d\theta}{dn} \theta \eta^2 - \frac{\theta^2}{2} (b \cdot n) \eta^2\Big) \, dx\, ds,
	\end{aligned}
\end{equation}
where $B_A = \cup_{r \in A} \p B_{r}$.
Let us estimate the term containing $d\theta/dn$:
\begin{equation}
	\begin{aligned}
	&\frac{1}{|A|} \iint_{B_A \times (T,t)} \frac{d\theta}{dn} \theta  \eta^2 \, dx\, ds \\
	&\quad \leq \frac{1}{\delta^2 (R-r)^2} \iint_{Q_{R,T} \setminus Q_{\varrho,\tau}} |\theta|^2 \, dx\, ds + \iint_{Q_{R,T} \setminus Q_{\varrho,\tau}} |\nabla \theta|^2 \, dx\, ds.
	\end{aligned}
\end{equation}
To estimate the term containing $b$, we use~\eqref{eq:fbc} with $u=\theta \eta$:
\begin{equation}
\begin{aligned}
	&- \frac{1}{|A|} \iint_{B_A \times (T,t)} \frac{\theta^2}{2} (b \cdot n) \eta^2 \, dx\, ds \leq \frac{M}{\delta^\alpha (R-r)^\alpha} \iint_{Q_{R,T} \setminus Q_{\varrho,\tau}} |\theta|^2 \, dx\, ds\\
	&\quad  + N \iint_{Q_{R,T} \setminus Q_{\varrho,\tau}} |\nabla \theta|^2 \, dx\, ds + \varepsilon \sup_{s\in(T,t)} \int_{B_R} |\theta(x,s)|^2 \, dx.
	\end{aligned}
\end{equation}
Combining everything and applying $\sup_{t \in (T,0)}$, we obtain 
\begin{equation}
	\label{eq:combiningeverything}
\begin{aligned}
	&\frac{1}{2} \sup_{t \in (\tau,0)} \int_{B_\varrho} |\theta(x,t)|^2 \, dx + \iint_{Q_{\varrho,\tau}} |\nabla \theta|^2  \, dx\, dt \leq C \times \mathbf{C} \iint_{Q_{R,T} \setminus Q_{\varrho,\tau} } |\theta|^2 \, dx\, dt \\
	&\quad\quad + (1 + N) \iint_{Q_{R,T} \setminus Q_{\varrho,\tau}} |\nabla \theta|^2 \, dx\, dt + \varepsilon \sup_{t \in (T,0)} \int_{B_R} |\theta(x,t)|^2 \, dx.
	\end{aligned}
\end{equation}
By Widman's hole-filling trick, there exists $\gamma := \max \{ (N+1)/(N+2), 2\varepsilon \} \in (0,1)$ satisfying
\begin{equation}
\begin{aligned}
	&\frac{1}{2(N+2)} \sup_{t \in (\tau,0)} \int_{B_\varrho} |\theta(x,t)|^2 \, dx + \iint_{Q_{\varrho,\tau}} |\nabla \theta|^2 \, dx\, dt \leq C(N) \times \mathbf{C} \iint_{Q_{R,T} \setminus Q_{\varrho,\tau} } |\theta|^2 \, dx\, dt \\
	&\quad\quad + \gamma \iint_{Q_{R,T}} |\nabla \theta|^2 \, dx\, dt + \frac{\gamma}{2(N+2)} \sup_{t \in (T,0)} \int_{B_R} |\theta(x,t)|^2 \, dx.
\end{aligned}
\end{equation}
To remove the extra terms on the RHS, we use a standard iteration argument on a sequence of scales (progressing `outward') $\varrho_0 = \varrho$, $\varrho_{k+1} = \varrho + (1-\lambda^{k+1}) (R-\varrho)$, $R_k =\varrho_{k+1}$, $\tau_0 = \tau$, $\tau_{k+1} = \tau + (1-\lambda^{2k+2}) (T-\tau)$, $T_k = \tau_{k+1}$, $k=0,1,2,\hdots$, where $0 < \lambda < 1$ is defined by the relation $\lambda^{\max(\alpha,2)} = \gamma/2$. See~\cite[p. 191, Lemma 6.1]{giustibook}, for example. This gives the desired Cacciopoli inequality.
\end{proof}

Next, we require a simple corollary:

\begin{corollary}[Interpolation inequality]
\label{cor:interpcor}
Let $\chi = 1+2/n$. Then
\begin{equation}
	\label{eq:interpinequality}
		\left( \iint_{Q_{\varrho,\tau}} |\theta|^{2\chi} \, dx\, dt \right)^{\frac{1}{\chi}} \les_{N,\alpha,\varepsilon}  \mathbf{C} \iint_{Q_{R,T} \setminus Q_{\varrho,\tau}} |\theta|^2 \, dx\, dt.
\end{equation}
\end{corollary}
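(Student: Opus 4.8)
\emph{Proof proposal.} The plan is to combine the Caccioppoli inequality of Lemma~\ref{lem:cacciopoli} with the scale-invariant parabolic Sobolev (Ladyzhenskaya-type) inequality for spatially compactly supported functions, the latter being solely responsible for the gain of the exponent $\chi = 1+2/n$. First I would record that inequality: if $f \in L^\infty_t L^2_x \cap L^2_t H^1_x$ and $f(\cdot,t)$ is supported in a fixed ball for a.e.\ $t$, then
\[
	\iint |f|^{2\chi}\,dx\,dt \les \Big( \sup_t \int |f(\cdot,t)|^2\,dx \Big)^{2/n} \iint |\nabla f|^2\,dx\,dt .
\]
For $n\ge 3$ this is obtained by freezing $t$, writing $\int |f|^{2\chi} = \int |f|^{4/n}|f|^2 \le \|f\|_{L^2}^{4/n}\|f\|_{L^{2n/(n-2)}}^2$ by H\"older, invoking Sobolev's inequality $\|f(\cdot,t)\|_{L^{2n/(n-2)}} \les \|\nabla f(\cdot,t)\|_{L^2}$ (legitimate since $f(\cdot,t)$ has compact support), replacing $\|f(\cdot,t)\|_{L^2}^{4/n}$ by its time supremum, and integrating in $t$; for $n=2$ one uses instead Ladyzhenskaya's inequality $\|f\|_{L^4(\R^2)}^2 \les \|f\|_{L^2(\R^2)}\|\nabla f\|_{L^2(\R^2)}$, noting that then $2\chi=4$ and $2/n=1$.

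Next I would insert an intermediate cylinder and a cut-off. Set $\varrho_* = (\varrho+R)/2$ and $\tau_* = (\tau+T)/2$, so that $R_0/2 \le \varrho < \varrho_* < R$, $T < \tau_* < \tau < 0$, $R-\varrho_* = (R-\varrho)/2$ and $\tau_*-T = (\tau-T)/2$; from~\eqref{eq:Cdef} this gives $\mathbf{C}(\varrho_*,\tau_*,R,T,M,\delta,\alpha) \le C(\alpha)\,\mathbf{C}$, where $\mathbf{C} := \mathbf{C}(\varrho,\tau,R,T,M,\delta,\alpha)$, and also $(R-\varrho)^{-2} \le \mathbf{C}$ since $\delta \le 1$. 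Choose $\phi(x,t)=\phi_1(x)\phi_2(t)$ with $0\le\phi\le1$, $\phi\equiv1$ on $Q_{\varrho,\tau}$, $\phi$ vanishing outside $B_{\varrho_*}\times(\tau_*,+\infty)$, $|\nabla\phi_1|\les(R-\varrho)^{-1}$, $|\phi_2'|\les(\tau-T)^{-1}$. Applying the parabolic Sobolev inequality to $f=\theta\phi$ on the time interval $(\tau_*,0)$ and using $\theta\phi=\theta$ on $Q_{\varrho,\tau}$,
\[
	\iint_{Q_{\varrho,\tau}} \theta^{2\chi} \le \iint (\theta\phi)^{2\chi} \les \Big( \sup_t \int (\theta\phi)^2 \Big)^{2/n} \iint |\nabla(\theta\phi)|^2 .
\]

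Finally I would estimate the two factors via Lemma~\ref{lem:cacciopoli}. Using $\sup_t\int(\theta\phi)^2 \le \sup_{t\in(\tau_*,0)}\int_{B_{\varrho_*}}\theta^2$ and $\iint|\nabla(\theta\phi)|^2 \les \iint_{Q_{\varrho_*,\tau_*}}|\nabla\theta|^2 + (R-\varrho)^{-2}\iint_{(B_{\varrho_*}\setminus B_\varrho)\times(\tau_*,0)}\theta^2$, then applying Lemma~\ref{lem:cacciopoli} on the pair $(Q_{\varrho_*,\tau_*},Q_{R,T})$ and exploiting the inclusions $Q_{R,T}\setminus Q_{\varrho_*,\tau_*} \subset Q_{R,T}\setminus Q_{\varrho,\tau}$ and $(B_{\varrho_*}\setminus B_\varrho)\times(\tau_*,0) \subset Q_{R,T}\setminus Q_{\varrho,\tau}$ together with the comparisons above, I get
\[
	\sup_t \int (\theta\phi)^2 + \iint |\nabla(\theta\phi)|^2 \les_{N,\alpha,\varepsilon} \mathbf{C}\,\mathcal{E}_0, \qquad \mathcal{E}_0 := \iint_{Q_{R,T}\setminus Q_{\varrho,\tau}} \theta^2 .
\]
Plugging this in yields $\iint_{Q_{\varrho,\tau}}\theta^{2\chi} \les (\mathbf{C}\,\mathcal{E}_0)^{2/n}(\mathbf{C}\,\mathcal{E}_0) = (\mathbf{C}\,\mathcal{E}_0)^{\chi}$, and taking $\chi$-th roots gives~\eqref{eq:interpinequality}. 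The only delicate point is the bookkeeping of domains: one must ensure that both the zeroth-order term generated by $\nabla\phi$ in the parabolic Sobolev step and the right-hand side of Caccioppoli on the intermediate cylinder land inside the annular region $Q_{R,T}\setminus Q_{\varrho,\tau}$ rather than on all of $Q_{R,T}$, which is precisely the reason for inserting the intermediate cylinder $Q_{\varrho_*,\tau_*}$; everything else is routine.
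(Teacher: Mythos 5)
Your proposal is correct and follows essentially the same route as the paper: a cut-off supported in an intermediate cylinder, the Caccioppoli inequality of Lemma~\ref{lem:cacciopoli} applied at that intermediate scale, and the parabolic Gagliardo--Nirenberg/Ladyzhenskaya inequality to produce the exponent $\chi=1+2/n$. The only difference is that you spell out the domain bookkeeping and the $n=2$ case explicitly, which the paper leaves implicit.
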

\begin{proof}
Let $0 \leq \varphi \in C^\infty_0(B_{(R+\varrho)/2})$ satisfying $\varphi \equiv 1$ on $B_\varrho$ and $|\nabla \varphi| \les 1/(R-\varrho)$. Using~\eqref{eq:cacciopoliinequality} at an intermediate scale, we find
	\begin{equation}
	\label{eq:cacciopoliinequalitydos}
	\begin{aligned}
		\sup_{t \in (\tau,0)} \int_{B_\varrho} |\theta(x,t)\varphi|^2 \, dx + \iint_{Q_{\varrho,\tau}} |\nabla (\theta\varphi)|^2 \, dx\, dt \les_{N,\alpha,\varepsilon} \mathbf{C} \iint_{Q_{R,T} \setminus Q_{\varrho,\tau}} |\theta|^2 \, dx\, dt.
		\end{aligned}
\end{equation}
Then~\eqref{eq:interpinequality} follows from the Gagliardo-Nirenberg inequality on the whole space. 
\end{proof}


We are now ready to use Moser's iteration:

\begin{proof}[Proof of local boundedness]
Let $\beta_k := \chi^k$, where $k=0,1,2,\hdots$. A standard computation implies that $\theta^{\beta_k}$ is also a non-negative Lipschitz subsolution. Hence, it satisfies the Cacciopoli inequality~\eqref{eq:interpinequality} with $R_k = \varrho+2^{-k}(R-\varrho)$, $r_k = R_{k+1}$, $T_k = \tau-2^{-2k}(\tau-T)$, $\tau_k=T_{k+1}$, $k=0,1,2, \hdots$ (iterating inward). In other words,
\begin{equation}
	\label{eq:iterationineqforfk}
	\norm{\theta^{2\beta_{k+1}}}_{L^1(Q_{\varrho_k,\tau_k})}^{\frac{1}{\chi}} \les_{N,\alpha,\varepsilon} \mathbf{C}(\varrho_k,\tau_k,R_k,T_k,M,\delta,\alpha)
	\norm{\theta^{2\beta_k}}_{L^1(Q_{R_k,T_k} \setminus Q_{\varrho_k,\tau_k})}.
\end{equation}
We may expand the domain of integration on the RHS as necessary.
Define
\begin{equation}
	M_0 := \norm{\theta^2}_{L^1(Q_{R_0,T_0} \setminus Q_{\varrho_0,\tau_0})}
	\end{equation}
	and
	\begin{equation}
	M_{k+1} := \norm{\theta^2}_{L^{\beta_{k+1}}(Q_{\varrho_k,\tau_k})} = \norm{\theta^{2\beta_{k+1}}}_{L^1(Q_{\varrho_k,\tau_k})}^{\frac{1}{\beta_{k+1}}}, \quad k=0,1,2,\hdots
\end{equation}
Raising~\eqref{eq:iterationineqforfk} to $1/\beta_k$ and using Eq.~\eqref{eq:Cdef} defining $\mathbf{C}$, we obtain
\begin{equation}
	M_{k+1} \leq C(N,\alpha,\varepsilon)^{\frac{1}{\beta_k}} 2^{\frac{\max(2,\alpha) k}{\beta_k}} \mathbf{C}(\varrho,\tau,R,T,M,\delta,\alpha)^{\frac{1}{\beta_k}} M_k.
\end{equation}
Iterating, we have
\begin{equation}
	M_{k+1} \leq C(N,\alpha,\varepsilon)^{\sum_{j=0}^k \frac{1}{\chi^j}} 2^{\sum_{j=0}^k \frac{\max(2,\alpha) j}{\chi^j}} \mathbf{C}(\varrho,\tau,R,T,M,\delta,\alpha)^{\sum_{j=0}^k \frac{1}{\chi_j}} M_0.
\end{equation}
Finally, we send $k \to +\infty$ and substitute $\sum_{j \geq 0} 1/\chi^j = (n+2)/2$ to obtain
\begin{equation}
	\label{eq:intermediatelocalbddness}
	\norm{\theta}_{L^\infty(Q_{\varrho,\tau})} \les_{N,\alpha,\varepsilon} \mathbf{C}^{\frac{n+2}{4}} \norm{\theta}_{L^2(Q_{R,T} \setminus Q_{\varrho,\tau})}.
\end{equation}

We now demonstrate how to replace $L^2$ on the RHS of~\eqref{eq:interpinequality} with $L^\gamma$ ($0 < \gamma < 2$). To begin, use the interpolation inequality $\norm{\theta}_{L^2} \leq \norm{\theta}_{L^\gamma}^{\gamma/2} \norm{\theta}_{L^\infty}^{1-\gamma/2}$ in~\eqref{eq:intermediatelocalbddness} and split the product using Young's inequality. This gives
\begin{equation}
	\label{eq:loweringthenorm}
	\norm{\theta}_{L^\infty(Q_{\varrho,\tau})} \leq C(N,\alpha,\varepsilon,\gamma) \mathbf{C}^{\frac{n+2}{2\gamma}} \norm{\theta}_{L^\gamma(Q_{R,T} \setminus Q_{\varrho,\tau})} + \frac{1}{2} \norm{\theta}_{L^\infty(Q_{R,T})}.
\end{equation}
The second term on the RHS is removed by iterating outward along a sequence of scales, as in the proof of the Cacciopoli inequality in Lemma~\ref{lem:cacciopoli}.
\end{proof}

\begin{remark}[Elliptic case]
The analogous elliptic result is
\begin{equation}
	\sup_{B_\varrho} \theta \les_{N,\alpha,\varepsilon,\gamma} \mathbf{C}^{\frac{n}{2\gamma}} \norm{\theta}_{L^\gamma(B_R \setminus B_\varrho)},
	\end{equation}
	where $\mathbf{C}(\varrho,R,M,\delta,\alpha) = 1/[\delta^2 (R-\varrho)^2] + MR_0^{\alpha-2}/[\delta^\alpha (R-\varrho)^\alpha]$.
The proof is the same except that $\chi = n/(n-2)$ and $\sum 1/\chi^j = n/2$.
\end{remark}

\subsection{Proof of Harnack inequality}

In this subsection, $\theta$ is a strictly positive Lipschitz solution.\footnote{There is no loss of generality if we replace $\theta$ by $\theta + \kappa$ and let $\kappa \to 0^+$.} Then $\log \theta$ is well defined. Let $0 \leq \psi \in C^\infty_0(B)$ be a radially decreasing function satisfying $\psi \equiv 1$ on $B_{3/4}$. We use the notation
\begin{equation}
	f_\avg = \frac{1}{\Vol} \int_{\R^n} f \psi^2 \, dx, \quad \Vol = \int_{\R^n} \psi^2 \, dx,
\end{equation}
whenever $f \in L^1_\loc(B)$.
Let
\begin{equation}
\label{eq:Kdef}
	\mathbf{K} = \exp \left( \log \theta(\cdot,0) \right)_\avg.
\end{equation}
whose importance will be made clear in the proof of Lemma~\ref{lem:universal}.
Define
\begin{equation}
	v = \log \left( \frac{\theta}{\mathbf{K}} \right).
\end{equation}
 Then $v(\cdot,0)_\avg = 0$. A simple computation yields
\begin{equation}
	\label{eq:supsol}
	|\nabla v|^2 = \p_t v - \Delta v + b \cdot \nabla v.
\end{equation}
That is, $v$ is itself a supersolution, though it may not itself be positive. We crucially exploit that $|\nabla v|^2$ appears on the LHS of~\eqref{eq:supsol}. First, we require the following decomposition of the drift:

\begin{lemma}[Decomposition of drift]
\label{lem:decompositionofdrift}
We have the following decomposition on $B_{3/4} \times (-T^*,T^*)$:
\begin{equation}
	b = b_1 + b_2, \quad b_1 = - \div a, \quad \div b_2 = 0,
\end{equation}
where $a : B_{3/4} \times (-T^*,T^*) \to \R^{n\times n}_{\rm anti}$ is antisymmetric and
\begin{equation}
	\label{eq:decompest}
	\| a \|_{L^2(B_{3/4} \times (-T^*,T^*))} + \| b_2 \|_{L^2(B_{3/4} \times (-T^*,T^*))} \les \| b \|_{L^2_t H^{-1}_x(Q^*)},
\end{equation}
where $Q^* = B_1 \times (-T^*,T^*)$.
\end{lemma}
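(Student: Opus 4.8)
The plan is to construct the antisymmetric "stream matrix" $a$ by solving an elliptic (Poisson-type) problem on a fixed ball, then split off the harmonic correction as the divergence-free remainder $b_2$. Recall $b \in L^2_t H^{-1}_x(Q^*)$ means that for a.e.\ $t$, $b(\cdot,t) \in H^{-1}(B_1;\R^n)$ with $\int_{-T^*}^{T^*} \|b(\cdot,t)\|_{H^{-1}(B_1)}^2 \, dt < +\infty$. The key algebraic observation (going back to the realization of divergence-free fields as curls of stream matrices) is: if $a_{ij}$ solves $-\Delta a_{ij} = \partial_j b_i - \partial_i b_j$ on a ball with, say, zero Dirichlet data, then $a$ is antisymmetric, and $b_1 := -\operatorname{div} a$, i.e.\ $(b_1)_i = -\partial_j a_{ij} = \partial_j \Delta^{-1}(\partial_i b_j - \partial_j b_i)$, satisfies $(b_1)_i = b_i - \partial_i(\Delta^{-1}\operatorname{div} b)$ because $\operatorname{div} b = 0$ forces the extra term to vanish only weakly; more carefully one writes $b_1 = b + \nabla p$ where $p = \Delta^{-1}\operatorname{div} b$, and since $\operatorname{div} b = 0$ one gets $p$ harmonic, hence $b_2 := b - b_1 = -\nabla p$ is smooth and divergence-free on the slightly smaller ball $B_{3/4}$.

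Concretely, I would proceed as follows. Fix $t \in (-T^*,T^*)$. For each pair $i < j$, solve $-\Delta a_{ij}(\cdot,t) = \partial_j b_i(\cdot,t) - \partial_i b_j(\cdot,t)$ on $B_1$ with zero Dirichlet boundary condition, and set $a_{ji} = -a_{ij}$, $a_{ii} = 0$; this defines an antisymmetric matrix field $a(\cdot,t)$. The right-hand side is a sum of first derivatives of an $H^{-1}$ field, hence lies in $H^{-2}(B_1)$, so by elliptic regularity $a_{ij}(\cdot,t) \in L^2(B_1)$ with $\|a(\cdot,t)\|_{L^2(B_1)} \lesssim \|b(\cdot,t)\|_{H^{-1}(B_1)}$ (this is the standard estimate: $\Delta^{-1}$ maps $H^{-1}$ to $H^1 \subset L^2$ and we apply it to $\partial_j b_i$). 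Integrating the square of this bound in $t$ gives the $L^2$ estimate on $a$ claimed in~\eqref{eq:decompest}. Then set $b_1 = -\operatorname{div} a$ (a priori a distribution) and $b_2 = b - b_1$. To see $\operatorname{div} b_2 = 0$ and that $b_2$ is as regular as claimed: compute $(b_1)_i = -\partial_j a_{ij} = \partial_j \Delta^{-1}(\partial_i b_j - \partial_j b_i) = \partial_i \Delta^{-1}\operatorname{div} b - \Delta^{-1}\Delta b_i = \partial_i(\Delta^{-1}\operatorname{div} b) + b_i$ up to boundary terms; using $\operatorname{div} b = 0$, the function $p := \Delta^{-1}\operatorname{div} b$ (with zero Dirichlet data) is harmonic on $B_1$, hence $C^\infty$ on $B_{3/4}$, and $b_2 = b - b_1 = -\nabla p$ is therefore smooth and divergence-free on $B_{3/4}$, with $\|b_2\|_{L^2(B_{3/4})} \lesssim \|b\|_{L^2(B_{3/4})} + \|b_1\|_{L^2(B_{3/4})}$; the second term is controlled by $\|\nabla a\|_{L^2}$, which by interior elliptic estimates on $B_{3/4} \subset\subset B_1$ is again $\lesssim \|b\|_{L^2_t H^{-1}_x(Q^*)}$. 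Finally integrate in $t$.

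The main obstacle is bookkeeping the boundary terms in the identity $(b_1)_i = b_i + \partial_i p$ so that $b_2$ genuinely comes out divergence-free and smooth on $B_{3/4}$ — the naive distributional manipulation $\Delta \Delta^{-1} = \mathrm{Id}$ is only clean with the right boundary conditions, and one must pass to the interior ball $B_{3/4}$ to absorb the harmonic part's regularity and to upgrade to interior $H^1$ (hence $L^2$-in-$x$, $L^2$-in-$t$) estimates on $a$ and $\nabla a$. An alternative, slightly cleaner route that sidesteps some of this is to extend $b(\cdot,t)$ to all of $\R^n$ (using that the statement is local and $b$ is smooth, or using an $H^{-1}$-bounded extension operator) and work with the whole-space Fourier multiplier $a_{ij} = (-\Delta)^{-1}(\partial_j b_i - \partial_i b_j)$; then $-\operatorname{div} a = b - \nabla(-\Delta)^{-1}\operatorname{div} b = b$ exactly since $\operatorname{div} b = 0$, so one can even take $b_2 = 0$ on the extended field and let $b_2$ on $B_{3/4}$ record only the discrepancy introduced by localization, which is controlled by commutator-type estimates. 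Either way, the heart of the matter is the elementary elliptic estimate $\|(-\Delta)^{-1}\nabla(\cdot)\|_{L^2} \lesssim \|\cdot\|_{H^{-1}}$, and the role of $\operatorname{div} b = 0$ in killing the gradient part; the rest is routine.
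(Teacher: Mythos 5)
Your construction has the right skeleton --- an antisymmetric stream matrix obtained by inverting the Laplacian on the curl of $b$, with a remainder that should be harmonic in the interior --- but the key estimate on $b_2$ does not close as written, and that estimate is precisely where the content of the lemma lies. You bound $\| b_2 \|_{L^2(B_{3/4})}$ by $\| b \|_{L^2(B_{3/4})} + \| b_1 \|_{L^2(B_{3/4})}$ and then control $\| b_1 \|_{L^2}$ by $\| \nabla a \|_{L^2}$ via interior elliptic estimates. Neither quantity is controlled by $\| b \|_{L^2_t H^{-1}_x}$: the hypothesis gives $b$ only in $H^{-1}$ in space, so $\| b \|_{L^2}$ is simply unavailable, and the interior estimate for $-\Delta a_{ij} = \p_j b_i - \p_i b_j$ with $H^{-2}$ right-hand side yields only $a \in L^2_\loc$, not $\nabla a \in L^2_\loc$; hence $b_1 = -\div a$ is itself only $H^{-1}$, and the difference $b - b_1$ is a priori no better. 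Likewise the identity $b_2 = -\nabla p$ with $p = \Delta^{-1} \div b$ harmonic rests on commuting the Dirichlet inverse with $\p_j$ and on $\Delta_D^{-1}\Delta = \Id$, both of which fail on a bounded domain, as you partly acknowledge. The correct mechanism, which your computation almost reveals, is that $b_2 = b + \div a$ is itself \emph{harmonic} in the interior: distributionally, $\Delta(\p_j a_{ij}) = \p_j(\p_i b_j - \p_j b_i) = \p_i \div b - \Delta b_i = -\Delta b_i$, so $\Delta b_2 = 0$. One then estimates $\| b_2 \|_{H^{-1}(B_{7/8})} \les \| b \|_{H^{-1}} + \| a \|_{L^2}$ and invokes interior estimates for harmonic functions, which upgrade an $H^{-1}$ bound on $B_{7/8}$ to an $L^2$ (indeed $C^k$) bound on $B_{3/4}$. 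Without this step, or some substitute, the lemma is not proved.

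Two secondary points. First, the zero-Dirichlet bound $\| a(\cdot,t) \|_{L^2(B_1)} \les \| b(\cdot,t) \|_{H^{-1}(B_1)}$ is delicate: the transposition argument pairs $b_i$ against $\p_j \varphi$ with $\varphi \in H^2 \cap H^1_0(B_1)$, and $\p_j \varphi$ does not lie in $H^1_0(B_1)$, so the pairing is not controlled by the $(H^1_0)^*$ norm of $b$; one should first cut off $b$ away from $\p B_1$. Second, your whole-space alternative needs care at low frequencies, since $(-\Delta)^{-1}\p_j$ maps $\dot H^{-1}$ (not $H^{-1}$) to $L^2$, and the cut-off field is no longer divergence free, so one cannot ``take $b_2 = 0$.'' The paper's proof addresses all three issues at once: it multiplies $b$ by a cutoff $\phi$, splits $\phi b$ into a high-frequency part (sent through the whole-space Hodge decomposition in $\dot H^{-1}$, producing $a$) and a low-frequency part (already in $L^2$, handled by the Leray projector), and collects all gradient remainders into a single function that is harmonic on the region where $\phi \equiv 1$ and is estimated there by interior regularity. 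Your route can be repaired along the same lines, but the repair is the proof.
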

\begin{proof}
Let $\phi \in C^\infty_0(B_1)$ with $\phi \equiv 1$ on $B_{15/16}$. Let $\tilde{b}=\phi b$. Hence, $\| \tilde{b} \|_{L^2_t H^{-1}_x(Q^*)} \les \| b \|_{L^2_t H^{-1}_x(Q^*)}$. We may decompose $\tilde{b}(\cdot,t) \in H^{-1}(\R^n)$ into $\tilde{b}_1(\cdot,t) \in \dot H^{-1}(\R^n)$, whose Fourier transform is supported outside of $B_2$, and $\tilde{b}_2(\cdot,t) \in L^2(\R^n)$. Define 
\begin{equation}
	a_{ij} = \Delta^{-1} ( - \p_j \tilde{b}_{1i} + \p_i \tilde{b}_{1j} ), \quad g = \Delta^{-1} ( - \div \tilde{b}_1 ).
\end{equation}
This amounts to performing the Hodge decomposition in $\R^n$ `by hand'.\footnote{We are simply exploiting the identity $\Delta = dd^* + d^*d$ on differential $k$-forms, up to a sign convention, for differential $1$-forms $\cong$ vector fields.} Clearly, $a$ is antisymmetric, and we have the decomposition
\begin{equation}
	- \tilde{b}_1 = \div a + \nabla g
\end{equation}
and the estimates
\begin{equation}
	\| a(\cdot,t) \|_{L^2(\R^n)} + \| g(\cdot,t) \|_{L^2(\R^n)} \les \| \tilde{b}_1(\cdot,t) \|_{\dot H^{-1}(\R^n)}.
\end{equation}
Similarly, we decompose
\begin{equation}
	\tilde{b}_2 = \mathbb{P} \tilde{b}_2 + \mathbb{Q} \tilde{b}_2,
\end{equation}
where $\mathbb{P}$ is the Leray (orthogonal) projector onto divergence-free fields, and $\mathbb{Q} = I - \mathbb{P}$ is the orthogonal projector onto gradient fields. We denote $\mathbb{Q} \tilde{b}_2 = \nabla f$.

Since $\tilde{b} = \tilde{b}_1 + \tilde{b}_2 = - \div a + \mathbb{P} \tilde{b}_2 + \nabla (f - g)$ is divergence free in $B_{7/8}$ on time slices, we have
\begin{equation}
	\Delta (f- g)(\cdot,t) = 0 \text{ in } B_{7/8},
\end{equation}
and by elliptic regularity, for all $k \geq 0$,
\begin{equation}
\| \nabla (f - g)(\cdot,t) \|_{H^k(B_{3/4})} \les_k \| \tilde{b}_1(\cdot,t) \|_{\dot H^{-1}(B_{7/8})} + \| \tilde{b}_2(\cdot,t) \|_{L^2(B_{7/8})}.
\end{equation}
Finally, we define
\begin{equation}
	b_2 = \mathbb{P} \tilde{b}_2 + \nabla (f - g) \in L^2_{t,x}(B_{3/4} \times (-T^*,T^*)),
\end{equation}
which satisfies the claimed estimates and is divergence free in $B_{7/8} \times (-T^*,T^*)$.
\end{proof}

We now proceed with the proof of Harnack's inequality:

\begin{lemma}
\label{lem:universal}
For all non-zero $t \in [-T^*,T^*]$, we write $I_t = [0,t]$ if $t > 0$ and $I_t = [t,0]$ if $t < 0$. Then
\begin{equation}
	\label{eq:universalest}
	-\sgn(t) v(\cdot,t)_\avg + \int_{I_t} \left(|\nabla v|^2(\cdot,s) \right)_\avg \, ds \les  |t| + \norm{b}_{L^2_t H^{-1}_x(B \times I_t)}^2.
\end{equation}
\end{lemma}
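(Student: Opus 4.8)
The plan is to use the supersolution identity~\eqref{eq:supsol} as an equation and to test it against the fixed, time-independent weight $\psi^2$. Integrating over $\R^n$ and integrating by parts in space yields, for every fixed $s$,
\[
	\int_{\R^n} |\nabla v|^2 \psi^2 \, dx = \frac{d}{ds} \int_{\R^n} v \psi^2 \, dx + 2 \int_{\R^n} \psi \, \nabla \psi \cdot \nabla v \, dx + \int_{\R^n} (b \cdot \nabla v) \psi^2 \, dx .
\]
Integrating this identity over $I_t$ (always in the direction of increasing time) and dividing by $\Vol$, the parabolic term telescopes to $\sgn(t)\big( v(\cdot,t)_\avg - v(\cdot,0)_\avg \big) = \sgn(t)\, v(\cdot,t)_\avg$, where we use the normalization $v(\cdot,0)_\avg = 0$ built into the definition~\eqref{eq:Kdef} of $\mathbf{K}$ — this is exactly the role of $\mathbf{K}$. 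What remains is the left side of~\eqref{eq:universalest}, so the lemma reduces to bounding $\Vol^{-1}\int_{I_t}\big( 2\int \psi \nabla\psi \cdot \nabla v + \int (b\cdot\nabla v)\psi^2 \big)\, ds$ by $|t| + \norm{b}_{L^2_t H^{-1}_x(B \times I_t)}^2$ up to a small multiple of $\int_{I_t}(|\nabla v|^2)_\avg\, ds$, which is absorbed into the left-hand side.

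For the cutoff term $2\int\psi\nabla\psi\cdot\nabla v$, Young's inequality gives a bound by $\tfrac14\int\psi^2|\nabla v|^2 + C\int|\nabla\psi|^2$; the first piece is absorbed and, since $\norm{\nabla\psi}_{L^2}$ is a fixed constant, the second contributes $\les |I_t| = |t|$ after integration in time. For the drift term I would invoke the decomposition $b = b_1 + b_2 = -\div a + b_2$ of Lemma~\ref{lem:decompositionofdrift}, with $a$ antisymmetric, together with the estimate~\eqref{eq:decompest} applied on the subinterval $I_t$ (legitimate because the decomposition is carried out slice-by-slice in time). For the antisymmetric part, integrating by parts transfers the derivative from $a$ onto $\psi^2\p_i v$; the Hessian term $\sum_{i,j} a_{ij}\p_i\p_j v$ vanishes by antisymmetry, leaving $2\int a_{ij}\psi\,\p_j\psi\,\p_i v$, which Young bounds by $\tfrac14\int\psi^2|\nabla v|^2 + C\int|a|^2|\nabla\psi|^2$. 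For $b_2$, the direct estimate $|b_2\cdot\nabla v|\psi^2 \leq \tfrac14\psi^2|\nabla v|^2 + \psi^2|b_2|^2$ suffices (no integration by parts is needed). Integrating in time, the $|\nabla v|^2$ pieces are absorbed and the remainder is $\les \int_{I_t}\big( \norm{a(\cdot,s)}_{L^2}^2 + \norm{b_2(\cdot,s)}_{L^2}^2 \big)\, ds + |t| \les \norm{b}_{L^2_t H^{-1}_x(B\times I_t)}^2 + |t|$, as desired.

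The main obstacle is the drift term. A naive integration by parts of $\int(b\cdot\nabla v)\psi^2$ against $\psi^2$ would leave a term linear in $v$, and $v = \log(\theta/\mathbf{K})$ has no a priori pointwise or $L^2$ bound; keeping the term as $b\cdot\nabla v$ and estimating it directly requires instead the $L^2$ norm of $b$, which is not available. The antisymmetric/solenoidal splitting circumvents both difficulties: the antisymmetric part is handled using only $\norm{a}_{L^2}$ — not $\norm{\div a}$ — via the pointwise cancellation $\sum_{i,j}a_{ij}\p_i\p_j v = 0$, which confines the $\nabla v$-dependence to the cutoff region where it is absorbed, while the solenoidal part is kept in the form $b_2\cdot\nabla v$ and absorbed directly against $|\nabla v|^2$. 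This is precisely where the background hypothesis $b\in L^2_t H^{-1}_x$ enters the Harnack inequality. Two further points require (routine) care: the orientation of the time integral encoded by $\sgn(t)$, and checking that the estimate~\eqref{eq:decompest} localizes correctly to $I_t$.
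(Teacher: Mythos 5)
Your proposal is correct and follows essentially the same route as the paper: testing the identity $|\nabla v|^2 = \p_t v - \Delta v + b\cdot\nabla v$ against $\psi^2$, using the normalization $v(\cdot,0)_\avg = 0$ from the definition of $\mathbf{K}$ to handle the parabolic term, absorbing the cutoff term by Young's inequality, and splitting the drift via Lemma~\ref{lem:decompositionofdrift} so that the antisymmetric part is handled through the cancellation $a_{ij}\p_i\p_j v = 0$ and the solenoidal part directly in $L^2$. All three absorptions and the localization of~\eqref{eq:decompest} to $I_t$ are handled as in the paper's argument.
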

\begin{proof}
We multiply~\eqref{eq:supsol} by $\psi^2$ and integrate over $B \times I_t$:
\begin{equation}
\label{eq:harnack1}
	\sgn(t) \int_B \left( v(x,0) - v(x,t) \right) \psi^2   \, dx +  \iint_{B \times I_t} |\nabla v|^2 \psi^2 \, dx\, ds \leq  \iint_{B \times I_t} 2 \psi \nabla \psi \cdot \nabla v + (b \cdot \nabla v) \psi^2 \, dx\, ds.
\end{equation}
By~\eqref{eq:Kdef}, $\int_{\R^n} v(x,0) \psi^2 \, dx = 0$.
The first term on the RHS is easily estimated:
\begin{equation}
	 \iint_{B \times I_t} 2 \psi \nabla \psi \cdot \nabla v \, dx\, ds \leq \frac{1}{4} \iint_{B \times I_t} |\nabla v|^2 \psi^2 \, dx\, ds + C|t|.
\end{equation}
To estimate the term containing $b$, we require the drift decomposition $b = b_1 + b_2$ from Lemma~\ref{lem:decompositionofdrift}. Then
\begin{equation}
	\label{eq:harnack3}
	\iint_{B \times I_t} ( b_1 \cdot \nabla v) \psi^2 \, dx \,ds = \iint_{B \times I_t} 2\psi a(\nabla \psi,\nabla v) \, dx\, ds \leq  \frac{1}{4} \iint_{B \times I_t} |\nabla v|^2 \psi^2 \, dx\, ds + C \norm{a}_{L^2(B \times I_t)}^2.
\end{equation}
and
\begin{equation}
	\label{eq:harnack4}
	\iint_{B \times I_t} ( b_2 \cdot \nabla v) \psi^2 \, dx \,ds \leq  \frac{1}{4} \iint_{B \times I_t} |\nabla v|^2 \psi^2 \, dx\, ds + C \norm{b_2}_{L^2(B \times I_t)}^2.
\end{equation}
Recall the estimate~\eqref{eq:decompest} from the decomposition. Combining~\eqref{eq:harnack1}--\eqref{eq:harnack4} and dividing by $\Vol$ gives~\eqref{eq:universalest}.
\end{proof}

In the following, we write $v = v_+ - v_-$, where $v_+, v_- \geq 0$. We also use the notation
\begin{equation}
	A^+ = \int_0^{T^*} \| b(\cdot,t) \|_{H^{-1}(B)}^2 \, dt, \quad A^- = \int_{-T^*}^0 \| b(\cdot,t) \|_{H^{-1}(B)}^2 \, dt.
\end{equation}

\begin{lemma}[Weak--$L^1$ estimates]
\label{lem:weakl1est}
With the above notation, we have
\begin{equation}
\norm{v_+}_{L^{1,\infty}_{t,x}(B_{3/4} \times (-T^*,0))} \les 1+ T^*(T^* + A^-)
\end{equation}
and
\begin{equation}
	\norm{v_-}_{L^{1,\infty}_{t,x}(B_{3/4} \times (0,T^*))} \les 1+T^*(T^* + A^+).
\end{equation}
\end{lemma}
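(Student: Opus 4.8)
The plan is to carry out Moser's weak-$L^1$ estimate for the logarithm, with Lemma~\ref{lem:universal} as essentially the only input. By symmetry it suffices to bound $v_+$ on the backward cylinder $B_{3/4}\times(-T^*,0)$; the estimate for $v_-$ on $B_{3/4}\times(0,T^*)$ follows by the same argument applied to $-v$, which also satisfies $(-v)(\cdot,0)_\avg=0$ and, via the $t>0$ form of~\eqref{eq:universalest}, the analogous inequalities with $A^-$ replaced by $A^+$.

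From Lemma~\ref{lem:universal} I would extract two facts. First, discarding the non-negative gradient term in~\eqref{eq:universalest} gives the uniform upper bound
\[
	v(\cdot,t)_\avg \les |t| + \norm{b}_{L^2_t H^{-1}_x(B\times[t,0])}^2 \les T^*+A^-, \qquad t\in[-T^*,0).
\]
Second, using~\eqref{eq:universalest} at $t=-T^*$ together with a lower bound on $v(\cdot,-T^*)_\avg$ (which, since~\eqref{eq:universalest} controls this quantity only from \emph{above}, one obtains from the energy identity behind Lemma~\ref{lem:universal} on the full cylinder $B\times(-T^*,T^*)$ and the $t>0$ lower bound on $v(\cdot,T^*)_\avg$), one controls the Dirichlet energy:
\[
	\int_{-T^*}^0 \left(|\nabla v|^2(\cdot,s)\right)_\avg \, ds \les T^*+A^-.
\]

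With these in hand the argument is the standard slicing. Since $\psi\equiv 1$ on $B_{3/4}$, a weighted Poincar\'e inequality for the cutoff $\psi$ and Cauchy--Schwarz give, for each fixed $t$, the slice-wise bound $\int_{B_{3/4}}|v(\cdot,t)-v(\cdot,t)_\avg|\,dx \les (|\nabla v|^2(\cdot,t))_\avg^{1/2}$. Fix $\lambda>0$ and set $G_\lambda=\{t\in(-T^*,0): v(\cdot,t)_\avg\le\lambda/2\}$. On $G_\lambda$, $v(x,t)>\lambda$ forces $v(x,t)-v(\cdot,t)_\avg>\lambda/2$, so Chebyshev and the slice-wise bound give $|\{v(\cdot,t)>\lambda\}\cap B_{3/4}|\les\lambda^{-1}(|\nabla v|^2(\cdot,t))_\avg^{1/2}$; integrating over $G_\lambda$, Cauchy--Schwarz in time and the energy bound yield $|\{v>\lambda\}\cap(B_{3/4}\times G_\lambda)|\les\lambda^{-1}\big(T^*(T^*+A^-)\big)^{1/2}$. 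On $G_\lambda^c$ we use the trivial bound $|\{v(\cdot,t)>\lambda\}\cap B_{3/4}|\les 1$ and $|G_\lambda^c|\le T^*$, noting that $G_\lambda^c=\emptyset$ as soon as $\lambda\gtrsim T^*+A^-$ by the upper bound above; hence $\lambda\,|\{v>\lambda\}\cap(B_{3/4}\times G_\lambda^c)|\les\lambda T^*\les T^*(T^*+A^-)$. Adding the two contributions and taking $\sup_\lambda$,
\[
	\norm{v_+}_{L^{1,\infty}_{t,x}(B_{3/4}\times(-T^*,0))} \les \big(T^*(T^*+A^-)\big)^{1/2} + T^*(T^*+A^-) \les 1+T^*(T^*+A^-),
\]
using $x^{1/2}\le 1+x$ for $x\ge 0$; the estimate for $v_-$ is identical.

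I expect the main obstacle to be the Dirichlet energy bound: Lemma~\ref{lem:universal} at $t=-T^*$ directly yields only $v(\cdot,-T^*)_\avg+\int_{-T^*}^0(|\nabla v|^2(\cdot,s))_\avg\,ds\les T^*+A^-$, so one must separately rule out that $v(\cdot,-T^*)_\avg$ is very negative; the full-cylinder version indicated above does this cheaply but has to be written down carefully. A secondary point is the weighted Poincar\'e inequality $\int_B|w-w_\avg|^2\psi^2\,dx\les\int_B|\nabla w|^2\psi^2\,dx$, which is forced on us because the only anchor and the only gradient control issuing from Lemma~\ref{lem:universal} are weighted by $\psi^2$; for a smooth (or Lipschitz), radial, decreasing $\psi$ that is positive in the interior of the ball this is standard, via reduction to radial functions and spherical harmonics and a one-dimensional weighted inequality, and I would cite it rather than reprove it. Everything else --- Chebyshev, Cauchy--Schwarz, the case split, and tracking the powers of $T^*$ --- is routine.
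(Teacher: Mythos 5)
Your reduction to two facts breaks down at the second one: the Dirichlet energy bound $\int_{-T^*}^0 (|\nabla v|^2(\cdot,s))_\avg\,ds \les T^*+A^-$ is simply false, and no repair along the lines you sketch is available. Lemma~\ref{lem:universal} at $t=-T^*$ gives $v(\cdot,-T^*)_\avg + \int_{-T^*}^0(|\nabla v|^2)_\avg\,ds \les T^*+A^-$, so you would indeed need a lower bound on $v(\cdot,-T^*)_\avg$; but the energy identity behind Lemma~\ref{lem:universal} on any cylinder $B\times(t_1,t_2)$ reads (after absorbing the gradient terms) $\tfrac12\iint|\nabla v|^2\psi^2 + \Vol\, v(\cdot,t_1)_\avg \le \Vol\, v(\cdot,t_2)_\avg + C(\dots)$, i.e.\ $v_\avg$ is essentially \emph{nondecreasing} in time because $v$ is a supersolution with nonnegative source $|\nabla v|^2$. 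So the full-cylinder identity only bounds $v(\cdot,-T^*)_\avg$ from \emph{above} by $v(\cdot,T^*)_\avg+C(\dots)$, which is useless. And the quantity really can be unbounded below: take $b=0$ and $\theta(x,t)=\Gamma(x,t+T^*+\epsilon)$ the heat kernel released at time $-T^*-\epsilon$. Then $\mathbf{K}$ stays bounded as $\epsilon\to0^+$, while $v(\cdot,-T^*)_\avg\sim -c/\epsilon\to-\infty$ and $\int_{-T^*}^0(|\nabla v|^2)_\avg\,ds\sim\int_{-T^*}^0(s+T^*+\epsilon)^{-2}ds\sim\epsilon^{-1}\to+\infty$. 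The conclusion of the lemma is nonetheless true for this example, which shows the weak-$L^1$ bound cannot be obtained by Chebyshev against a global energy bound.

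The missing idea is precisely the mechanism that replaces the energy bound: the inequality $p_\avg(t)+\frac{1}{C_1}\int_{I_t}(|p-p_\avg|^2)_\avg\,ds\le 0$ couples the (possibly unbounded) downward drift of $p_\avg$ to the measure $\eta(\mu,t)$ of the super-level sets, since on $\{p>\mu\}$ one has $|p-p_\avg|^2\ge(\mu-p_\avg)^2$. Writing $\bar p(t)=-p_\avg(-t)\ge0$, this says $\bar p$ is a supersolution of the quadratic ODE $\dot q=\frac{1}{C_1\Vol}\eta(\mu,|t|)(\mu+q)^2$, $q(0)=0$; global existence of the supersolution forces, by separation of variables, $\frac{1}{C_1\Vol}\int_0^{T^*}\eta(\mu,|s|)\,ds\le\frac1\mu$ \emph{uniformly}, no matter how large the Dirichlet energy is. This gives $\|p_+\|_{L^{1,\infty}}\les1$, and the term $T^*(T^*+A^-)$ in the statement enters only through the shift $v\le p+C_0(T^*+A^-)$ and the quasi-norm property of $L^{1,\infty}$. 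Your slicing, Poincar\'e, and case-splitting steps are fine as far as they go, but without this ODE (or an equivalent parabolic John--Nirenberg / Bombieri--Giusti input) the argument does not close.
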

\begin{proof}
By~\eqref{eq:universalest} and a weighted Poincar{\'e} inequality \cite[Lemma 3, p. 120]{moserharnack},
\begin{equation}
	\label{eq:wewishtorealize}
	-\sgn(t) v_\avg(t) + \frac{1}{C_1} \int_{I_t} \left( |v-v_\avg|^2 \right)_\avg \, ds \leq C_0 \int_{I_t} \left( 1 + \| b(\cdot,t) \|_{H^{-1}(B)}^2 \right) ds,
\end{equation}
 where $C_0 > 0$ is the implied constant in~\eqref{eq:universalest}.
In the following, we focus on the case $t \in [-T^*,0]$.
We use~\eqref{eq:wewishtorealize} to obtain a sub/supersolution inequality corresponding to a quadratic ODE. First, we remove the forcing in the ODE by defining
\begin{equation}
	\label{eq:pdef}
	p(x,t) := v(x,t) - C_0 \underbrace{\int_{I_t} \left( 1 + \| b(\cdot,t) \|_{H^{-1}(B)}^2 \right) ds}_{\leq T^* + A^-}.
\end{equation}
Then~\eqref{eq:wewishtorealize} becomes
\begin{equation}
	p_\avg(t) + \frac{1}{C_1} \int_{I_t} \left( |p-p_\avg|^2 \right)_\avg \, ds \leq 0.
\end{equation}
Let us introduce the super-level sets, whose measures $\eta$ appear as a coefficient in the ODE:
\begin{equation}
	\label{eq:etadef}
	\eta(\mu,t) := \left|\{ x\in B_{3/4} : p(x,t) > \mu\}\right|, \quad \mu > 0.
\end{equation}
Since $p_\avg \leq 0$, we have that $p(x,t) - p_\avg(t) > \mu - p_\avg(t) > 0$ whenever $p(x,t) > \mu$. Then
\begin{equation}
	\label{eq:rephrase}
	p_\avg(t) + \frac{1}{C_1 \Vol} \int_{I_t} \eta(\mu,s) (\mu-p_\avg)^2 \, ds \leq 0.
\end{equation}
It is convenient to rephrase~\eqref{eq:rephrase} in terms of a positive function evolving forward-in-time: $\bar{p}(t) = -p_\avg(-t)$ with $t \in [0,T^*]$. Then~\eqref{eq:rephrase} becomes
\begin{equation}
	\bar{p}(t) \geq \frac{1}{C_1 \Vol} \int_0^t \eta(\mu,|s|) (\mu+\bar{p}(s))^2 \, ds.
\end{equation}
The above inequality means that $\bar{p}$ is a supersolution of the quadratic ODE
\begin{equation}
	\label{eq:qode}
	\dot q = \frac{1}{C_1 \Vol} \times \eta(\mu,|t|) (\mu+q)^2
\end{equation}
with $q(0) = 0$. The above scalar ODE has a comparison principle. \emph{A priori}, since~\eqref{eq:qode} is quadratic, its solutions may quickly blow-up depending on the size of $\eta(\mu,\cdot)$ and $\mu$. However, because $\bar{p}$ lies above the solution $q$, $q$ does not blow up, and we obtain a bound for the density $\eta(\mu,\cdot)$ in the following way. After separating variables in~\eqref{eq:qode}, we obtain
\begin{equation}
	\frac{1}{C_1 \Vol} \int_0^{T^*} \eta(\mu,|s|) \, ds = \frac{1}{\mu} - \frac{1}{\mu+q(T^*)} \leq \frac{1}{\mu},
\end{equation}
since $q \geq 0$. That is,
\begin{equation}
	\norm{p_+}_{L^{1,\infty}(B_{3/4} \times (-T^*,0))} \les 1.
\end{equation}
Finally, since $\norm{\cdot}_{L^{1,\infty}(B_{3/4} \times (-T^*,0))}$ is a quasi-norm and $v \leq p + C_0 (T^* + A^-)$ pointwise due to \eqref{eq:pdef}, we have
\begin{equation}
	\begin{aligned}
	\norm{v_+}_{L^{1,\infty}(B_{3/4} \times (-T^*,0))} &\leq 2\norm{p_+}_{L^{1,\infty}(B_{3/4} \times (-T^*,0))} + 2 C_0 \norm{T^* + A^-}_{L^{1,\infty}(B_{3/4} \times (-T^*,0))} \\ &\les 1 +  T^*(T^* + A^-).
	\end{aligned}
\end{equation}
The proof for $t \in [0,T^*]$ is similar except that one uses sub-level sets in~\eqref{eq:etadef} with $\mu < 0$.
\end{proof}

We now require the following lemma of Moser~\cite{moserpointwise}, which we quote almost directly, and in which we denote by $Q(\varrho)$, $\varrho > 0$ any family of domains satisfying $Q(\varrho) \subset Q(r)$ for $0 < \varrho < r$.
\begin{lemma}[Lemma~3 in~\cite{moserpointwise}]
\label{lem:moserlem}
Let $m, \zeta, c_0$, $1/2 \leq \theta_0 < 1$ be positive constants, and let $w > 0$ be a continuous function defined in a neighborhood of $Q(1)$ for which
\begin{equation}
	\label{eq:moserlocalboundednessreq}
	\sup_{Q(\varrho)} w^p < \frac{c_0}{(r-\varrho)^m {\rm meas}(Q(1))} \iint_{Q(r)} w^p \, dt \, dx
\end{equation}
for all $\varrho, r, p$ satisfying
\begin{equation}
	\frac{1}{2} \leq \theta_0 \leq \varrho < r \leq 1, \quad 0 < p < \zeta^{-1}.
\end{equation}
Moreover, let
\begin{equation}
	\label{eq:moserweakl1req}
	{\rm meas} \{ (x,t) \in Q(1) : \log w > \mu \} < \frac{c_0 \zeta}{s} {\rm meas}(Q(1))
\end{equation}
for all $\mu > 0$. Then there exists a constant function $\gamma = \gamma(\theta_0,m,c_0)$ such that
\begin{equation}
	\label{eq:moserlemconclusion}
	\sup_{Q(\theta_0)} w < \gamma^\zeta.
\end{equation}
\end{lemma}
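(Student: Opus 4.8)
The plan is to follow the proof of this lemma from Moser~\cite{moserpointwise}: the two hypotheses are precisely a reverse-H\"older--type $L^p$-to-$L^\infty$ bound~\eqref{eq:moserlocalboundednessreq} and a weak-$L^1$ bound~\eqref{eq:moserweakl1req} on $\log w$, and one fuses them by a truncation argument together with a Moser/De~Giorgi iteration over the radii $\theta_0 \le \varrho < r \le 1$. First I would record that, since $w$ is continuous on a neighborhood of the compact set $Q(1)$, the quantity $\Lambda_0 := \sup_{Q(1)} w$ is finite; this fact will be used only to prime the iteration and must drop out of the final constant. Assuming harmlessly that $\Lambda_0 \ge 1$ and writing $v = \log w$, the target $\sup_{Q(\theta_0)} w < \gamma^\zeta$ is equivalent to the quantitative statement $\sup_{Q(\theta_0)} v \le \zeta \log\gamma$, i.e.\ that the $L^\infty$ size of $v$ is controlled by a fixed multiple of its weak-$L^1$ ``norm'', which~\eqref{eq:moserweakl1req} puts at $\lesssim c_0\zeta$.

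The crucial step will be a single-scale estimate with a carefully tuned exponent. I would fix $p = \tfrac12\zeta^{-1}$ (so that $p < \zeta^{-1}$ while $p\zeta$ is a fixed constant), apply~\eqref{eq:moserlocalboundednessreq}, and split $\iint_{Q(r)} w^p$ at a level $e^\mu$: on $\{w \le e^\mu\}$ one has $w^p \le e^{p\mu}$, while on $\{v>\mu\}$ one uses $w^p \le (\sup_{Q(r)} w)^p$ together with~\eqref{eq:moserweakl1req} for the measure of this set (and the fact that it is empty once $\mu$ exceeds $\sup_{Q(r)} v$, which makes the tail contribution small). This produces an inequality of the form
\[
	\sup_{Q(\varrho)} w^p \lesssim \frac{c_0}{(r-\varrho)^m}\Bigl( e^{p\mu} + \frac{c_0\zeta}{\mu}\,\sup_{Q(r)} w^p \Bigr).
\]
Choosing $\mu = \mu(\varrho,r)$ so that the coefficient of $\sup_{Q(r)} w^p$ is $\le \tfrac12$, and noting that with $p\asymp\zeta^{-1}$ the resulting exponent $p\mu$ depends only on $c_0$, $m$ and $r-\varrho$ (not on $\zeta$), one is led, after a further interpolation against a slightly larger exponent $p'' < \zeta^{-1}$ (which converts the $\sup_{Q(r)} w^p$ on the right-hand side into a power of $\sup_{Q(r)} w$ strictly below $p$), to a self-improving inequality of De~Giorgi type: for a fixed $\sigma \in (0,1)$,
\[
	\sup_{Q(\varrho)} w \le \Bigl(\tfrac{C}{(r-\varrho)^m}\Bigr)^{\zeta} \bigl(\sup_{Q(r)} w\bigr)^{1-\sigma} + \tfrac12\sup_{Q(r)} w ,
\]
with $C = C(c_0,m)$. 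The choice $p\asymp\zeta^{-1}$ is exactly what keeps the truncation factor and the interpolation gain $\sigma$ in a $\zeta$-independent range, and this is what ultimately forces the final constant $\gamma$ to depend only on $\theta_0$, $m$, $c_0$.

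Finally I would iterate this inequality over the geometric sequence of scales $\varrho_j = 1-(1-\theta_0)2^{-j}$ ($j\ge0$, $\varrho_0=\theta_0$): at each step one splits off the $(\sup_{Q(\varrho_{j+1})}w)^{1-\sigma}$ term by Young's inequality, absorbs the remaining $\tfrac12\sup_{Q(\varrho_{j+1})}w$ through the geometric weights, and observes that the boundary contribution $2^{-k}\sup_{Q(\varrho_k)}w \le 2^{-k}\Lambda_0 \to 0$; this is the bookkeeping of~\cite[p.~191, Lemma~6.1]{giustibook}. The result is $\sup_{Q(\theta_0)} w \le \gamma^{\zeta}$, which is~\eqref{eq:moserlemconclusion}. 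I expect the genuinely delicate point to be precisely this last synthesis --- arranging the exponent $p$, the truncation level $\mu$, and the sequence of radii so that the iteration closes with a bound that does \emph{not} retain the merely qualitative finiteness of $\Lambda_0 = \sup_{Q(1)} w$.
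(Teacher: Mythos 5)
First, note that the paper does not prove this lemma: it is quoted ``almost directly'' from Moser's paper \cite{moserpointwise} (Lemma~3 there) and used as a black box, so your reconstruction can only be judged against Moser's argument. Your opening move is right --- the level-set splitting $\iint_{Q(r)}w^p\le e^{p\mu}\,{\rm meas}(Q(1))+(\sup_{Q(r)}w)^p\,{\rm meas}\{\log w>\mu\}$ combined with \eqref{eq:moserlocalboundednessreq} and \eqref{eq:moserweakl1req} is exactly the starting point --- but the way you tune $p$ and $\mu$ does not close. With $p=\tfrac12\zeta^{-1}$ fixed, forcing the coefficient of $\sup_{Q(r)}w^p$ to be $\le\tfrac12$ pins down $\mu\ge 2c_0^2\zeta(r-\varrho)^{-m}$, hence $e^{p\mu}\ge\exp(c_0^2(r-\varrho)^{-m})$. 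After taking $p$-th roots, your single-scale inequality reads $\sup_{Q(\varrho)}w\le\gamma_{r-\varrho}^{\zeta}+2^{-2\zeta}\sup_{Q(r)}w$ with $\gamma_\delta\sim\exp(2c_0^2\delta^{-m})$, and this cannot be iterated: convergence of $\sum_j 2^{-2\zeta j}\gamma_{\delta_j}^{\zeta}$ forces $\delta_j\gtrsim j^{-1/m}$, which is incompatible with $\sum_j\delta_j\le 1-\theta_0$ whenever $m\ge1$ (the relevant case). The intermediate ``interpolation against a slightly larger exponent $p''$'' that is supposed to produce a De~Giorgi-type inequality with a $\zeta$-independent gain $\sigma$ is not substantiated --- invoking \eqref{eq:moserlocalboundednessreq} at a second exponent does not convert $\sup_{Q(r)}w^p$ into $(\sup_{Q(r)}w)^{p(1-\sigma)}$ --- and without it the qualitative bound $\Lambda_0=\sup_{Q(1)}w$ never drops out.

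The missing idea is that $\mu$ and $p$ must be coupled to $\varphi(r):=\sup_{Q(r)}\log w$ itself. Moser takes $\mu=\varphi(r)/2$ and then chooses $p$ by $e^{p\varphi(r)/2}=\varphi(r)/(2c_0\zeta)$, which is admissible (i.e.\ $0<p<\zeta^{-1}$) as long as $\varphi(r)>2c_0\zeta$, the opposite case being essentially the desired conclusion. This yields $\varphi(\varrho)<\tfrac12\varphi(r)\bigl(1+\log\tfrac{2c_0}{(r-\varrho)^m}\big/\log\tfrac{\varphi(r)}{2c_0\zeta}\bigr)$ and hence a dichotomy: either $\varphi(\varrho)\le\tfrac{1+\epsilon}{2}\varphi(r)$, or $\varphi(r)\le 2c_0\zeta\,(2c_0(r-\varrho)^{-m})^{1/\epsilon}$. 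Iterating this dichotomy along scales $\varrho_j\uparrow 1$ whose gaps shrink only polynomially (so that the bound in the second alternative grows polynomially in $j$ while the contraction $(\tfrac{1+\epsilon}{2})^{j}$ decays geometrically), and using the continuity of $w$ --- i.e.\ $\varphi(1)<\infty$ --- only in the branch where the first alternative holds at every step, gives $\varphi(\theta_0)\le C(\theta_0,m,c_0)\,\zeta$, which is \eqref{eq:moserlemconclusion}. So your instinct that the ``last synthesis'' is the delicate point is correct, but the synthesis you propose fails; the dichotomy structure is what is missing. (Minor point: the ``$s$'' in \eqref{eq:moserweakl1req} is a typo for $\mu$, as you correctly assumed.)
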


\begin{proof}[Proof of Harnack inequality]
We apply Lemma~\ref{lem:moserlem} to $w = \theta / \mathbf{K}$ with $Q(\varrho) = B_\varrho \times (-T^* + 2\ell (1-\varrho),0)$ and $\theta_0 = 1/2$.\footnote{Technically, to satisfy the conditions in Lemma~\ref{lem:moserlem}, $w = \theta / \mathbf{K}$ should be extended arbitrarily to be continuous in a neighborhood of $Q(1)$.} Indeed, we recognize the requirement~\eqref{eq:moserlocalboundednessreq} as the local boundedness guaranteed by Theorem~\ref{thm:localboundedness} and~\eqref{eq:moserweakl1req} as the weak $L^1$ estimate from Lemma~\ref{lem:weakl1est}. This gives
\begin{equation}
	\label{eq:estimateone}
\sup_{B_{1/2} \times (-T^* + \ell,0)} \frac{\theta}{\mathbf{K}} \les 1.
\end{equation}
Here, we suppress also the dependence on the time lag $\ell$.
Meanwhile, $v_- = \log_+ (\mathbf{K}/\theta)$ is a subsolution. Hence,
\begin{equation}
	\norm{v_-}_{L^\infty(B_{1/2} \times (\ell,T^*))} \les \norm{v_-}_{L^{1,\infty}(B_{3/4} \times (0,T^*))}.
\end{equation}
On the other hand,
\begin{equation}
	\label{eq:estimatetwo}
	\frac{\mathbf{K}}{\inf \theta} = \sup \frac{\mathbf{K}}{\theta} = \exp \left( \sup \log \frac{\mathbf{K}}{\theta} \right) \leq \exp \left( \sup v_- \right) \les \exp \left( \norm{v_-}_{L^{1,\infty}} \right) \les 1,
\end{equation}
where the $\inf$ and $\sup$ are taken on $B_{1/2} \times (\ell,T^*)$.
Combining~\eqref{eq:estimateone} and~\eqref{eq:estimatetwo}, we arrive at
\begin{equation}
	\sup_{B_{1/2} \times (-T^*+\ell,0)} \theta \les \mathbf{K} \les \inf_{B_{1/2} \times (\ell,T^*)} \theta,
\end{equation}
as desired.
\end{proof}

\section{Bounded total speed}
In this section, we prove the statements in Theorem~\ref{thm:lqlp} concerning the space $L^1_t L^\infty_x$.

\begin{proposition}[Local boundedness]
\label{pro:localbddbddtotal}
Let $T \in (-\infty,0)$  and $\tau \in (T,0)$. Let $b \: Q_{1,T} \to \R^n$ be a smooth divergence-free drift satisfying
\begin{equation}
                            \label{eq1.40}
	\norm{b}_{L^1_t L^\infty_x(Q_{1,T})} \leq 1/8.
\end{equation}
Let $\theta$ be a non-negative Lipschitz subsolution on $Q_{1,T}$.
Then, for all $\gamma \in (0,2]$, we have
\begin{equation}
	\label{eq:localbddnessforbddtotalspeed}
	\| \theta \|_{L^\infty(Q_{1/2,\tau})} \les_{\gamma} \left(1 + \frac{1}{\tau-T} \right)^{\frac{n+2}{2\gamma}} \norm{\theta}_{L^\gamma(Q_{1,T})}.
\end{equation}
\end{proposition}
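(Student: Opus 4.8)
The first observation is that it suffices to bound $\theta$ at the \emph{final} time. Indeed, if we can show
\[
\|\theta(\cdot,0)\|_{L^\infty(B_{1/2})}\les_\gamma \Big(1+\tfrac1{\sigma}\Big)^{\frac{n+2}{2\gamma}}\|\theta\|_{L^\gamma(B_1\times(-\sigma,0))}
\]
for every subsolution on $B_1\times(-\sigma,0)$ with divergence-free drift of $L^1_tL^\infty_x$-norm $\le1/8$ there, then for each $t_0\in(\tau,0)$ we apply this on the translated cylinder $B_1\times(t_0-(\tau-T),t_0)\subset Q_{1,T}$ (legitimate since $t_0-(\tau-T)>T$), and take the supremum over $t_0$. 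So I will work on $Q:=B_1\times(-\sigma,0)$ with $\sigma\le -T$ and seek a pointwise bound at $(x_0,0)$, $x_0\in B_{1/2}$, by running Moser iteration on backward cylinders $B(x_0,r_k)\times(-\rho_k,0)$ collapsing to $(x_0,0)$, with $r_k\downarrow0$, $\rho_k\downarrow0$, $r_0\le\frac14$, $\rho_0=\sigma$; then I would take $\sup_{x_0}$.

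\textbf{Moser iteration and the drift term.} The plan is to follow the scheme of Section~\ref{sec:localbdd} verbatim (Cacciopoli inequality for $\theta^{\beta_k}$, $\beta_k=\chi^k$; parabolic Gagliardo--Nirenberg; iterate on powers), the only issue being the drift contribution in the Cacciopoli step. For a space-time cutoff $\eta$ vanishing near the parabolic boundary and $u=\theta^{\beta_k}\eta$, the divergence-free structure gives
\[
\Big|\iint (b\cdot\nabla(\theta^{\beta_k}))\,\theta^{\beta_k}\eta^2\Big|=\Big|\iint (\theta^{\beta_k})^2\eta\,(b\cdot\nabla\eta)\Big|\le \|\nabla\eta\|_{L^\infty}\int_{-\rho_k}^0\!\|b(\cdot,s)\|_{L^\infty}\!\int (\theta^{\beta_k})^2\eta\,dx\,ds,
\]
which, with $\|\nabla\eta\|_{L^\infty}\lesssim (r_k-r_{k+1})^{-1}$, is bounded by
\[
\frac{C}{r_k-r_{k+1}}\,\|b\|_{L^1_tL^\infty_x(B(x_0,r_k)\times(-\rho_k,0))}\,\sup_{-\rho_k<s<0}\int_{B(x_0,r_k)}(\theta^{\beta_k})^2\,dx .
\]
This is a term of $\sup_t$-type, which must be absorbed into the left-hand side of Cacciopoli (the $\sup_t\!\int (\theta^{\beta_k})^2$ term); the hole-filling of Lemma~\ref{lem:cacciopoli} then applies provided the coefficient $C\,\|b\|_{L^1_tL^\infty_x(\cdots)}/(r_k-r_{k+1})$ is $<\tfrac12$. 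Here the \emph{borderline} nature of $L^1_tL^\infty_x$ enters: because the time-slabs $(-\rho_k,0)$ shrink and $\rho\mapsto\int_{-\rho}^0\|b(\cdot,s)\|_{L^\infty}\,ds$ is continuous with value $\le\|b\|_{L^1_tL^\infty_x(Q)}\le1/8$ at $\rho=\sigma$, one chooses the $\rho_k$ decreasing fast enough that $\int_{-\rho_k}^0\|b\|_{L^\infty}\,ds\le \varepsilon_0(r_k-r_{k+1})$ for a fixed small $\varepsilon_0$, whence the drift term is absorbed at every scale. (The absolute bound $\|b\|_{L^1_tL^\infty_x(Q)}\le1/8$ is what makes the very first step legitimate with $\rho_0=\sigma$, i.e.\ without having to shrink in time beforehand.) With the drift absorbed, each Cacciopoli/Sobolev step reads $M_{k+1}\lesssim \mathbf C_k^{1/(2\beta_k)}M_k$ with $\mathbf C_k\sim (r_k-r_{k+1})^{-2}+(\rho_k-\rho_{k+1})^{-1}$, and the product converges as in the proof of Theorem~\ref{thm:localboundedness}, yielding $\|\theta(\cdot,0)\|_{L^\infty(B_{1/2})}\les (1+\tfrac1\sigma)^{(n+2)/4}\|\theta\|_{L^2(Q)}$; the passage from $L^2$ to $L^\gamma$ on the right is the usual interpolation-plus-outward-iteration argument already used twice in Section~\ref{sec:localbdd}.

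\textbf{Main obstacle.} The delicate point is precisely this drift term: unlike the terms controlled by the form boundedness condition~\eqref{eq:fbc}, it is of $\sup_t$-type and carries the spatial factor $(r_k-r_{k+1})^{-1}$, so for $\|b\|_{L^1_tL^\infty_x}$ merely bounded it cannot be absorbed at small scales --- this is exactly why $L^1_tL^\infty_x$ is not covered by Proposition~\ref{pro:stuffissatisfied} and needs a separate treatment. Making this work requires carefully balancing the temporal shrinkage of the cylinders (which kills the local $L^1_tL^\infty_x$-mass) against the spatial shrinkage (which produces $(r_k-r_{k+1})^{-1}$) so that the accumulated Cacciopoli constants still combine to a quantity depending only on $n$, $\gamma$ and $\sigma$; the smallness $\|b\|_{L^1_tL^\infty_x}\le1/8$ is used, and this is why the statement fails --- and the constant must be allowed to depend on $b$ itself --- in the absence of smallness.
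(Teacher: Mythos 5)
There is a genuine gap, and it sits exactly where you locate the ``main obstacle.'' Your plan is to absorb the drift term by shrinking the time-extent $\rho_k$ of the Moser cylinders fast enough that $\int_{-\rho_k}^0\|b(\cdot,s)\|_{L^\infty}\,ds\le\varepsilon_0(r_k-r_{k+1})\sim\varepsilon_0 2^{-k}$. But the rate at which $\rho\mapsto\int_{-\rho}^0\|b(\cdot,s)\|_{L^\infty}\,ds$ tends to $0$ is controlled only by the \emph{profile} of $b$, not by its norm: take $\|b(\cdot,s)\|_{L^\infty}=\tfrac1{8\delta}\mathbf 1_{(-\delta,0)}(s)$ with $\delta$ tiny, which satisfies~\eqref{eq1.40}. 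Then your constraint forces $\rho_k\lesssim\varepsilon_0 2^{-k}\delta$, so the Cacciopoli constants $\mathbf C_k\gtrsim(\rho_k-\rho_{k+1})^{-1}\gtrsim\delta^{-1}$ and the accumulated factor $\prod_k\mathbf C_k^{1/(2\beta_k)}$ blows up as $\delta\to0$. Worse, for smooth admissible $b$ whose mass concentrates in a doubly-exponentially thin slab near $t=0$, the forced choice of $\rho_k$ makes $\sum_k\chi^{-k}\log(\rho_k-\rho_{k+1})^{-1}$ diverge and the iteration does not close at all. So your scheme cannot produce the constant claimed in~\eqref{eq:localbddnessforbddtotalspeed}, which depends only on $n$, $\gamma$, and $\tau-T$; at best it reproves the qualitative, profile-dependent statement (the content of the ``bounded total speed'' clause of Theorem~\ref{thm:lqlp} \emph{without} the smallness hypothesis). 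The alternative of running Gronwall instead of absorption fails for the same borderline reason: the exponential factor $\exp(C2^k\|b\|_{L^1_tL^\infty_x})$ raised to the power $1/\beta_k=\chi^{-k}$ does not have a convergent product since $\chi\le2$.

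The paper's mechanism is entirely different and does not fight the size of the drift term at all: it performs a dynamic change of variables $x=\lambda(t)y$ with $\lambda(T)=1$ and $\dot\lambda=-2\|b(\cdot,t)\|_{L^\infty}$, so that the transformed drift $\tilde b=b(\lambda y,t)-\dot\lambda y$ satisfies $\tilde b\cdot y/|y|\ge0$ on the annulus $B_1\setminus B_{1/2}$. Against a radially \emph{decreasing} cutoff $\varphi$, the drift contribution $\iint\lambda^{-1}(\tilde b\cdot\nabla\Phi)|\tilde\theta|^2$ then has a favorable sign and is simply discarded, at the price of a positive $\div\tilde b=2n\|b(\cdot,t)\|_{L^\infty}$, which acts as a time-integrable potential. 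The hypothesis $\|b\|_{L^1_tL^\infty_x}\le1/8$ is used only to guarantee $\lambda\in[3/4,1]$, i.e.\ that the moving frame does not collapse the spatial domain; it is not used to absorb a first Cacciopoli step (and indeed, as your own estimate shows, the coefficient $\|b\|_{L^1_tL^\infty_x}/(r_0-r_1)\approx1$ would not be small enough for that). After this change of variables the Cacciopoli constant is $(\tau-T)^{-1}+(R-\varrho)^{-2}$, uniformly in $b$, and Moser's iteration runs verbatim.
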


\begin{proof}
For smooth $\lambda : [T,0] \to (0,+\infty)$, we define $x = \lambda y$ and
\begin{equation}
	\tilde{\theta}(y,t) =  \theta(\lambda y,t).
\end{equation}
That is, $\tilde{\theta}$ is obtained by dynamically rescaling $\theta$ in space. The new PDE is
\begin{equation}
	\label{eq:resultingeqnis}
	\p_t \tilde{\theta} - \frac{1}{\lambda^2} \Delta_y \tilde{\theta} + \frac{1}{\lambda} \tilde{b} \cdot \nabla_y \tilde{\theta} \leq 0
\end{equation}
where
\begin{equation}
	\label{eq:tildebdef}
	\tilde{b}(y,t) = b(\lambda y,t) - \dot \lambda y.
\end{equation}
Choose $\lambda(T) = 1$, $\dot \lambda = - 2\| b(\cdot,t) \|_{L^\infty}$ when $t \in [T,0]$. Clearly, $3/4 \leq \lambda \leq 1$. Our picture is that $\tilde{\theta}$ dynamically `zooms in' on $\theta$.
In particular, using \eqref{eq1.40} and~\eqref{eq:tildebdef},
\begin{equation}
	\label{eq:tildebpointsoutward}
	\tilde{b}(\cdot,t) \cdot \frac{y}{|y|} \geq - \| b(\cdot,t) \|_{L^\infty} + 2  \| b(\cdot,t) \|_{L^\infty}|y| \geq 0 \text{ when } y \in B_{1} \setminus B_{1/2},
	\end{equation}
and
\begin{equation}
	\div \tilde{b} =  2n \| b(\cdot,t) \|_{L^\infty} \geq 0.
\end{equation}
We now demonstrate Cacciopoli's inequality in the new variables. Let $3/4 \leq \varrho < R \leq 1$. Let $\varphi \in C^\infty_0(B_{R})$ be a radially symmetric and decreasing function satisfying  $0 \leq \varphi \leq 1$ on $\R^n$, $\varphi \equiv 1$ on $B_\varrho$, and $|\nabla \varphi| \les 1/(R - \varrho)$. Let $\eta \in C^\infty_0(T,+\infty)$ satisfying $0 \leq \eta \leq 1$ on $\R$, $\eta \equiv 1$ on $(\tau,0)$, and $|d\eta/dt| \les 1/(\tau-T)$.
Let $\Phi = \varphi^2 \eta$. We integrate Eq.~\eqref{eq:resultingeqnis} against $\tilde{\theta} \Phi$ on $B_{R} \times (T,t)$ for $t\in (\tau,0)$. Then
\begin{equation}
\begin{aligned}
	&\frac{1}{2} \int  |\tilde{\theta}|^2(y,t) \Phi \, dy + \iint \lambda^{-2} |\nabla \tilde{\theta}|^2 \Phi \, dy\, ds \\
	&\quad \leq \frac{1}{2} \iint  |\tilde{\theta}|^2 \p_t \Phi \, dy\, ds
	- \iint \lambda^{-2} \tilde{\theta} \nabla \tilde{\theta} \cdot \nabla \Phi \, dy\, ds  \\
	&\quad\quad + \frac{1}{2} \iint \lambda^{-1} \underbrace{\tilde{b} \cdot \nabla \Phi}_{\leq 0 \text{ by }\eqref{eq:tildebpointsoutward}} |\tilde{\theta}|^2 \, dy\, ds + \frac{1}{2} \iint \lambda^{-1} \div \tilde{b}  |\tilde{\theta}|^2 \Phi \, dy\, ds.
	\end{aligned}
\end{equation}
While $\div \tilde{b}$ has a disadvantageous sign, it simply acts as a bounded potential.
Simple manipulations give the Cacciopoli inequality:
\begin{equation}
\begin{aligned}
	 &\sup_{t \in (\tau,0)} \int_{B_\varrho} |\tilde{\theta}|^2(y,t) \, dy + \iint_{Q_{\varrho,\tau}}  |\nabla \tilde{\theta}|^2 \varphi^2 \, dy\, ds  \\
	 &\quad \les \left( \frac{1}{\tau-T} + \frac{1}{(R-\varrho)^2} \right) \iint_{Q_{R,T} \setminus Q_{\varrho,\tau}} |\tilde{\theta}|^2 \, dy\, ds.
	 \end{aligned}
\end{equation}
The remainder of the proof proceeds as in Theorem~\ref{thm:localboundedness} except in the $(y,t)$ variables. Namely, we have the interpolation inequality as in Corollary~\ref{cor:interpcor}, and $\tilde{\theta}^\beta$ is a subsolution of~\eqref{eq:resultingeqnis} whenever $\beta \geq 1$. Therefore, we may perform Moser's iteration verbatim. As in~\eqref{eq:loweringthenorm}, the $L^2$ norm on the RHS may be replaced by the $L^\gamma$ norm. Finally, undoing the transformation yields the inequality~\eqref{eq:localbddnessforbddtotalspeed} in the $(x,t)$ variables, since $(y,t) \in B_R \times \{ t \}$ corresponds to $(x,t) \in B_{\lambda(t)R} \times \{ t \}$.
\end{proof}

The quantitative local boundedness property in Theorem~\ref{thm:lqlp} follows from applying Proposition~\ref{pro:localbddbddtotal} and its rescalings on finitely many small time intervals. In Remark~\ref{rmk:boundedtotalspeedassertion}, we justify that the constant depends on the `profile' of $b$ and not just its norm. 

\section{Counterexamples}
\label{sec:counters}

\subsection{Elliptic counterexamples}
Let $n \geq 3$. Our counterexamples will be axisymmetric in `slab' domains $B_R \times (0,1)$, where $R>0$ is arbitrary and $B_R$ is a ball in $\R^{n-1}$. We use the notation $x = (x',z)$, where $x' \in \R^{n-1}$, $r = |x'|$, and $z \in (0,1)$. Let
\begin{equation}
\ubar{\theta}(x) = u(r) z,
\end{equation}
\begin{equation}
b(x) = V(r) e_z.
\end{equation}
Since $b$ is a shear flow in the $e_z$ direction, it is divergence free. Then
\begin{equation}
	\label{eq:incylindrical}
	- \Delta \ubar{\theta} + b \cdot \nabla \ubar{\theta} = - z \Delta_{x'} u + V u.
\end{equation}
We will a construct a subsolution $\ubar{\theta}$ and supersolution $\bar{\theta}$ using the \emph{steady Schr{\"o}dinger equation}
\begin{equation}
	\label{eq:steadyschrodinger}
	-\Delta u + V u = 0
\end{equation}
in dimension $n-1$, where additionally $u \geq 0$ and $V \leq 0$. By the scale invariance in $x'$, it will suffice to construct a solution at a single fixed length scale $R = R_0$. The way to proceed is well known. We define
\begin{equation}
	u = \log \log \frac{1}{r},
\end{equation}
\begin{equation}
	V = \frac{\Delta u}{u},
\end{equation}
for $r \leq R_0 \ll 1$ so that $u$ is well defined.
A simple calculation verifies that $0 \leq u \in H^1_\loc$, $\Delta u, V \le 0$, and $\Delta u, V \in L^{(n-1)/2}_\loc$.\footnote{Since Schr{\"o}dinger solutions with critical potentials $V$ belong to $L^p_\loc$ for all $p < \infty$ (see Han and Lin~\cite{hanlinbook}, Theorem 4.4), it is natural to choose $u$ with a $\log$. The double $\log$ ensures that $u$ has finite energy when $n=3$. Notice also that $\Delta u = - (r \log r^{-1})^{-2}$ when $n = 3$.} Therefore, $Vu = \Delta u \in L^1_\loc$, and the PDE~\eqref{eq:steadyschrodinger} is satisfied in the sense of distributions. Using~\eqref{eq:incylindrical}, we verify that $\ubar{\theta}$ is a distributional subsolution:
\begin{equation}
	- \Delta \ubar{\theta} + b \cdot \nabla \ubar{\theta} = - z \Delta_{x'} u + V u = (1-z) Vu \leq 0 \text{ in } B_{R_0} \times (0,1),
\end{equation}
with equality at $\{ z = 1 \}$. We also wish to control solutions from above. 
Since $\Delta u \leq 0$, we define
\begin{equation}
	\bar{\theta}(x',z) = u(r).
\end{equation}
Clearly, $\ubar{\theta} \leq \bar{\theta}$, and $\bar{\theta}$ is a distributional supersolution:
\begin{equation}
	-\Delta \bar{\theta} + b \cdot \nabla \bar{\theta} = - \Delta_{x'} u \geq 0 \text{ in } B_{R_0} \times (0,1).
\end{equation}

We now construct smooth subsolutions and supersolutions approximating $\ubar{\theta}$ and $\bar{\theta}$ according to the above procedure. Let $\varphi$ be standard mollifier and
\begin{equation}
\varphi_\varepsilon = \frac{1}{\varepsilon^{n-1}} \varphi \left( \frac{\cdot}{\varepsilon} \right).
\end{equation}
 Define $u_\varepsilon = \varphi_\varepsilon \ast u$, $V_\varepsilon = \Delta u_\varepsilon/u_\varepsilon$, $b_\varepsilon = V_\varepsilon(r) e_z$, $\ubar{\theta}_{\varepsilon} = z u_\varepsilon(r)$, and $\bar{\theta}_\varepsilon = u_\varepsilon(r)$. Then $(\ubar{\theta}_\varepsilon)$ and $(\bar{\theta}_\varepsilon)$ trap a family $(\theta_\varepsilon)$ of smooth solutions to the PDEs
 \begin{equation}
	- \Delta \theta_\varepsilon + b_\varepsilon \cdot \nabla \theta_\varepsilon = 0 \text{ on } B_{R_0/2} \times (0,1)
 \end{equation}
 when $\varepsilon \in (0,R_0/2)$. Moreover, we have the desired estimates
 \begin{equation}
	\sup_{\varepsilon \in (0,R_0/2)} \| \theta_\varepsilon \|_{L^p(B_{R_0/2} \times (0,1))} \leq \| \bar{\theta} \|_{L^p(B_{R_0} \times (0,1))} < +\infty, \quad p \in [1,+\infty),
 \end{equation}
 \begin{equation}
	\sup_{\varepsilon \in (0,R_0/2)} \| V_\varepsilon \|_{L^{\frac{n-1}{2}}(B_{R_0/2})} \les \sup_\varepsilon \| \Delta u_\varepsilon \|_{L^{\frac{n-1}{2}}(B_{R_0/2})} \les \| \Delta u \|_{L^{\frac{n-1}{2}}(B_{R_0})} < +\infty,
 \end{equation}
 and the singularity as $\varepsilon \to 0^+$:
\begin{equation}
	\sup_{B_{\frac{R_0}{4}} \times (\frac{1}{4},\frac{3}{4})} \theta_\varepsilon \geq \sup_{B_{\frac{R_0}{4}} \times (\frac{1}{4},\frac{3}{4})} \ubar{\theta}_\varepsilon \to +\infty \text{ as } \varepsilon \to 0^+.
\end{equation}

\begin{remark}[Line singularity]
The solutions constructed above are singular on the $z$-axis, as the maximum principle demands.
\end{remark}

\begin{remark}[Time-dependent examples]
\label{rmk:timedependentexamples}
The above analysis of unbounded solutions for the steady Schr{\"o}dinger equation with critical potential is readily adapted to the parabolic PDE $\p_t u - \Delta u + V u = f$ in $B_{R_0} \times (-T_0,0) \subset \R^{n+1}$, $n \geq 2$, (i) with potential $V$ belonging to $L^q_t L^p_x$, $2/q + n/p = 2$, $q > 1$, and zero force, or (ii) with force $f$ belonging to the same space and zero potential. For example, one can define $u = \log \log (-t + r^2)$, $V = - (\p_t u - \Delta u)/u$, and $f=0$. The case $q=1$ is an endpoint case in which solutions remain bounded. These examples are presumably well known, although we do not know a suitable reference.
\end{remark}

\subsection{Parabolic counterexamples}

\begin{proof}[Proof of borderline cases: $L^q_t L^p_x$, $\frac{2}{q} + \frac{n}{p} = 2$, $q > 1$]

\textit{1. A heat subsolution}. Let
\begin{equation}
	\Gamma(x,t) = (4\pi t)^{-n/2} e^{-\frac{|x|^2}{4t}}
\end{equation}
be the heat kernel. Let
\begin{equation}
	E(x,t) = (\Gamma - c_n)_+,
\end{equation}
where $c_n = (8\pi)^{-n/2}$. Then $E$ is globally Lipschitz away from $t=0$, and $E(\cdot,t)$ is supported in the ball $B_{R(t)}$, where
\begin{equation}
	\label{eq:radiusdef}
	R(t)^2 = 2nt \log \frac{2}{t}, \quad t  < 2,
\end{equation}
and $E$ vanishes in $t \geq 2$. 

\textit{2. A steady, compactly supported drift}. There exists a divergence-free vector field $U \in C^\infty_0(B_{4})$ satisfying
\begin{equation}
	U \equiv \vec{e}_1 \text{ when } |x| \leq 2.
\end{equation}
Here is a construction: Let $\phi \in C^\infty_0(B_{4})$ be a radially symmetric cut-off function such that $\phi \equiv 1$ on $B_3$. By applying Bogovskii's operator in the annulus $B_{4} \setminus \overline{B_{2}}$, see~\cite[Theorem III.3.3, p.~179]{galdi}, there exists $W \in C^\infty_0(B_{4} \setminus \overline{B_{2}})$ solving
\begin{equation}
	\div W = - \div (\phi \vec{e}_1) \in C^\infty_0(B_{4} \setminus \overline{B_{2}}).
\end{equation}
Notably, the property of compact support is preserved. Finally, we define
\begin{equation}
	U = \phi \vec{e}_1 + W.
\end{equation}

\textit{3. Building blocks}. Let $0 \leq S \in C^\infty_0(0,1)$ and $X \: \R \to \R^n$ be the solution of the ODE
\begin{equation}
	\dot X(t) = S(t) \vec{e}_1, \quad X(0) = -10n \vec{e}_1.
\end{equation}
Define
\begin{equation}
	b_{S}(x,t) = S(t) U \left( \frac{x - X(t)}{R(t)} \right),
\end{equation}
where $R(t)$ was defined in~\eqref{eq:radiusdef} above,
and
\begin{equation}
	E_{S}(x,t) = E(x-X(t), t).
\end{equation}
Then $E_{S}$ is a subsolution:
\begin{equation}
	\left( \p_t - \Delta + b_{S} \cdot \nabla \right) E_{S} \leq 0 \quad \text{ on } \R^n \times (0,1).
\end{equation}
If $[a,a'] \subset (0,1)$, $S \in C^\infty_0(a,a')$, and $\int S \, dt \geq 20n$, then we have $E_{S}(\cdot,t)|_{B_3} \equiv 0$ when $t \leq a$ or $t \geq a'$. Additionally, $E_{S}(\cdot,\tilde{t}) = E(\cdot,\tilde{t})$ for some $\tilde{t} \in (a,a')$.

We also consider the solution $\Phi_S$ to the PDE:
\begin{equation}
\begin{aligned}
	\left( \p_t - \Delta + b_{S} \cdot \nabla \right) \Phi_S &= 0\quad  \text{ on } \R^{n+1} \setminus \{ (X(0),0) \} \\
	\Phi_S|_{t = 0} &= \delta_{x = X(0)},
	\end{aligned}
\end{equation}
which for short times $|t| \ll 1$ and negative times is equal to the heat kernel $\Gamma(x-X(0),t)$. By the comparison principle,
\begin{equation}
	E_{S} \leq \Phi_S.
\end{equation}

We have the following measurements on the size of the drift:
\begin{equation}
	\| b_S \|_{L^q_t L^p_x(\R^{n+1})}^q \les \int_{\R} S(t)^q R(t)^{\frac{nq}{p}} \, dt,
\end{equation}
where $1 \leq p,q < +\infty$,
and
\begin{equation}
	\| b_S \|_{L^1_t L^\infty_x(\R^{n+1})} = \|U\|_{L^\infty(\R^n)} \int_{\R} S(t) \, dt.
\end{equation}

\textit{4. Large displacement}. For $S_k \in C^\infty_0(t_k,t_k')$, $k \geq 1$, with $t_k = o_{k \to +\infty}(1)$ and $[t_k,t_k'] \subset (0,1)$ disjoint-in-$k$, we consider the drifts $b_{S_k}$. 
Let $M > 0$. We claim that it is possible to choose $S_k$ satisfying
\begin{equation}
	\norm{b_{S_k}}_{L^1_t L^\infty_x(\R^{n+1})} = M
\end{equation}
and
\begin{equation}
	\left \| \sum_{k \geq 1} b_{S_k} \right \|_{L^q_t L^p_x(\R^{n+1})} < +\infty
\end{equation}
for all $p,q \in [1,+\infty]$ satisfying $2/q+n/p = 2$ and $q>1$. Indeed, consider
\begin{equation}
                \label{eq7.12}
	\bar{S}(t) = \left( t \log t^{-1} \log \log t^{-1} \right)^{-1}
\end{equation}
when $t \leq c_0$ so that the above expression is well defined, and extended smoothly on $[c_0,1]$. We ask also that $t_1 \leq c_0$.
Since
\begin{equation}
	\label{eq:theintegral}
	\int_{a}^{a'} \bar{S}(t) dt = \log \log \log t^{-1} \big|_{a}^{a'}
\end{equation}
when $a' \leq c_0$, we have
\begin{equation}
	\int_{t=0}^1 \bar{S}(t) \, dt = +\infty,
\end{equation}
whereas
\begin{equation}
\begin{aligned}
	\int_{t=0}^1 \bar{S}(t)^q R(t)^{\frac{nq}{p}} \, dt &\leq
	O(1) + C_n \int_{t=0}^{c_0} (t \log t^{-1})^{-q+\frac{nq}{2p}} ( \log \log t^{-1} )^{-q} \, dt \\
	&\leq O(1) + C_n \int_{t=0}^{c_0} (t \log t^{-1})^{-1} ( \log \log t^{-1} )^{-q} \, dt < +\infty
	\end{aligned}
\end{equation}
when $q \in (1,+\infty)$. The case $q = +\infty$ is similar. We choose $S_k = \bar{S}(t) \varphi_k$ with suitable smooth cut-offs $\varphi_k$ to complete the proof of the claim.

\textit{5. Unbounded solution}. We choose $M = 20n$ and a suitable sequence of $S_k$ as above. We reorder the building blocks we defined above so that the $k$th subsolution and $k$th drift are `activated' on times $(1-t_k',1-t_k)$. Define
\begin{equation}
	b_k(\cdot, t) = b_{S_k}(\cdot,t - (1-t_k') + t_k), \quad b = \sum_{k \geq 1} b_k
\end{equation}
and, for size parameters $A_k \geq 0$,
\begin{equation}
	E_k(\cdot,t ) = E_{S_k}(\cdot, t- (1-t_k') + t_k) \mathbf{1}_{(1-t_k',1-t_k)}, \quad E = \sum_{k \geq 1} A_k E_k.
\end{equation}
Then $E$ is a subsolution of the PDE
\begin{equation}
	(\p_t - \Delta + b \cdot \nabla) E \leq 0\quad \text{ on } B_3 \times (-\infty,1).
\end{equation}
We further define
\begin{equation}
	\Phi_k(\cdot,t) = \Phi_{S_k}(\cdot, t- (1-t_k') + t_k), \quad \theta = \sum_{k \geq 1} A_k \Phi_k,
\end{equation}
which is a solution of the PDE
\begin{equation}
	(\p_t - \Delta + b \cdot \nabla) \theta = 0\quad \text{ on } B_2 \times (-\infty,1).
\end{equation}
Since $E_k \leq \Phi_k$, we have that $E \leq \theta$ on $B_2 \times (-\infty,1)$.
Additionally, we have
\begin{equation}
	\label{eq:lowerboundonslices}
	\sup_t A_k \| E_{k}(\cdot,t) \|_{L^\infty(B_1)} \geq  A_k \| E_{k}(\cdot,1-\tilde t_k) \|_{L^\infty(B_1)}  \ges A_k {t_k'}^{-n/2},
\end{equation}
where $\tilde t_k\in (t_k,t_k')$ satisfies $X_{S_k}(\tilde{t_k})=0$.
Therefore, by the comparison principle and~\eqref{eq:lowerboundonslices}, we have
\begin{equation}
	\label{eq:limsup}
	\limsup_{t \to 1_-} \| \theta(\cdot,t) \|_{L^\infty(B_1)} \ges \limsup_{k \to +\infty} A_k {t_k'}^{-n/2}.
\end{equation}
To control the solution from above, we use
\begin{equation}
	\label{eq:thingtoprune}
	\| \theta \|_{L^\infty_t L^1_x(\R^{n+1})} \leq \sum A_k.
\end{equation}
Therefore, it is possible to choose $A_k \to 0$ as $k \to +\infty$ while keeping the $\limsup$ in~\eqref{eq:limsup} infinite. Hence, by `pruning' the sequence of $A_k$ (meaning we pass to a subsequence, without relabeling), we can always ensure that $\| \theta \|_{L^\infty_t L^1_x(\R^{n+1})} < +\infty$.\end{proof}

\begin{remark}
	\label{rmk:boundedtotalspeedassertion}
	The sequence of solutions $\{\theta_k\}$ above demonstrates that the constant in the quantitative local boundedness property in Theorem~\ref{thm:lqlp} for drifts $b \in L^1_t L^\infty_x$ depends on the `profile' of $b$ rather than just its norm.
\end{remark}

\begin{proof}[Proof of non-borderline cases: $L^p_x L^q_t$, $\frac{3}{q} + \frac{n-1}{p} > 2$, $p \leq q$]
This construction exploits rescaled copies of $E$ and is, in a certain sense, self-similar.

\emph{1. Building blocks}. Let $(t_k) \subset (0,1)$ be an increasing sequence, with $t_k \to 1$ as $k \to +\infty$. Define $I_k = (t_k,t_{k+1})$, $R_k^2 = |I_k|$.

Let $0 \leq S \in C^\infty(0,1)$ satisfying $\int_0^1 S(t) \, dt = M$ with $M = 20n$. Define $X_k \: \R \to \R^n$ to be the solution of the ODE
\begin{equation}
	\dot X_k(t-t_k) = \underbrace{\frac{1}{|I_k|} S\left(\frac{t}{|I_k|} \right)}_{=: S_k(t-t_k)} \vec{e}_1, \quad X_k(t_k) = -10n \vec{e}_1.
\end{equation}
The `total speed' has been normalized: $\int |\dot X_k| \, dt = \int S \, dt = M$.
Define also
\begin{equation}
	\label{eq:scalingansatz}
	b_{k}(x,t) = S_k(t) U\left( \frac{x - X_k(t)}{R_k} \right)
\end{equation}
and
\begin{equation}
	E_k(x,t) = \frac{1}{R_k^n} E\left( \frac{x-X_k(t)}{R_k}, \frac{t-t_{k}}{|I_k|} \right).
\end{equation}
Then $E_{k}$ is a subsolution:
\begin{equation}
	\left( \p_t - \Delta + b_{k} \cdot \nabla \right) E_{k} \leq 0 \text{ on } \R^{n+1} \setminus \{ (X_k(t_k),t_k) \}
\end{equation}
and satisfies many of the same properties as $E_k$ in the previous construction, among which is
\begin{equation}
	E_{k}(\cdot,\tilde{t_k}) = \frac{1}{R_k^n} E\left(\frac{\cdot}{R_k},\tilde{t} \right)
\end{equation}
for some $\tilde{t_k} \in (t_k,t_{k+1})$ and $\tilde{t} \in (0,1)$. We define the solution $\theta_k$ to the PDE:
\begin{equation}
\begin{aligned}
	\left( \p_t - \Delta + b_{k} \cdot \nabla \right) \theta_k &= 0  \quad \text{ on } \R^{n+1} \setminus \{ (X_k(t_k),t_k) \}\\
	\theta_k|_{t = t_k} &= \delta_{x = X_k(t_k)},
	\end{aligned}
\end{equation}
which for short times $|t - t_k| \ll_k 1$ and times $t < t_k$ is equal to the heat kernel $\Gamma(x-X_k(t_k),t)$. The comparison principle implies
\begin{equation}
	E_{k} \leq \theta_k.
\end{equation}

\textit{2. Estimating the drift}. We now estimate the size of $b_k$. To begin, we estimate the $L^q_t L^p_x$ norms, $\frac{2}{q} + \frac{n}{p} > 2$. Using the scalings from~\eqref{eq:scalingansatz}, we have
\begin{equation}
	\label{eq:maxb}
	\max |b_k| \leq \| U \|_{L^\infty} \| S \|_{L^\infty} |I_k|^{-1}
\end{equation}
and
\begin{equation}
	\label{eq:interpguy2}
	\| b_k \|_{L^q_t L^p_x(\R^{n+1})} \les \| U \|_{L^\infty} \| S \|_{L^\infty} |I_k|^{\frac{1}{q}-1}  R_k^{\frac{n}{p}} \les R_k^{\varepsilon(p,q)} = o_{k\to+\infty}(1),
\end{equation}
since $|I_k| = R_k^2$.
Next, we estimate the $L^p_{x'} L^\infty_{x_n} L^{\tilde{q}}_t$ norm, where $\frac{2}{\tilde{q}} + \frac{n-1}{p} > 2$. We are most interested when $\tilde{q} = +\infty$ and $p = \frac{n-1}{2}-$, but it is not more effort to estimate this. Importantly, we have
\begin{equation}
	{\rm supp } \; b_k \subset B^{\R^{n-1}}_{C_n R_k} \times (-C_n,C_n) \times I_k.
\end{equation}
Using this and~\eqref{eq:maxb}, we have
\begin{equation}
	\label{eq:interpguy1}
	\| b_k \|_{L^p_{x'} L^\infty_{x_n} L^{\tilde{q}}_t(\R^{n+1})} \les \| U \|_{L^\infty} \| S \|_{L^\infty} R_k^{\frac{n-1}{p}} |I_k|^{\frac{1}{\tilde{q}}-1} \les R_k^{\varepsilon(p,\tilde{q})} = o_{k \to +\infty}(1).
\end{equation}
Interpolating between~\eqref{eq:interpguy2} and~\eqref{eq:interpguy1} with $(p,\tilde{q}) = (\frac{n-1}{2}-,\infty)$, we thus obtain
\begin{equation}
	\| b_k \|_{L^p_x L^q_t(\R^{n+1})} = o_{k \to +\infty}(1)
\end{equation}
when $\frac{3}{q} + \frac{n-1}{p} > 2$ and $p \leq q$. After `pruning' the sequence in $k$ (meaning we pass to a subsequence, without relabeling), we have
\begin{equation}
	\| b \|_{L^q_t L^p_x(\R^{n+1})} \leq \sum\| b_k \|_{L^q_t L^p_x(\R^{n+1})} < +\infty, \quad \frac{2}{q} + \frac{n}{p} > 2
\end{equation}
and
\begin{equation}
	\| b \|_{L^p_x L^q_t(\R^{n+1})} \leq \sum \| b_k \|_{L^p_x L^q_t(\R^{n+1})}  < +\infty, \quad \frac{3}{q} + \frac{n-1}{p} > 2, \quad p \leq q.
\end{equation}

\emph{3. Concluding}. The remainder of the proof proceeds as before, with the notable difference that we do not need to reorder the blocks in time. To summarize, we have
\begin{equation}
	\label{eq:lowerboundonslices2}
	\sup_t A_k \| E_{k}(\cdot,t) \|_{L^\infty(B_1)} \geq  A_k \| E_{k}(\cdot,\tilde{t_k}) \|_{L^\infty(B_1)}  \ges A_k {R_k}^{-n},
\end{equation}
where $\tilde t_k\in (t_k,t_{k+1})$ satisfies $X_{k}(\tilde{t_k})=0$, and hence,
\begin{equation}
	\label{eq:limsup2}
	\limsup_{t \to 1_-} \| \theta(\cdot,t) \|_{L^\infty(B_1)} \ges \limsup_{k \to +\infty} A_k {R_k}^{-n}
\end{equation}
To control the solution from above, we again use~\eqref{eq:thingtoprune} and choose $A_k \to 0$ as $k \to +\infty$ while maintaining that the RHS~\eqref{eq:limsup2} is infinite. By again `pruning' the sequence in $k$, we have $\| \theta \|_{L^\infty_t L^1_x(\R^{n+1})} < +\infty$. This completes the proof.
\end{proof}

\begin{remark}[An open question]
	\label{rmk:anopenquestion}
As mentioned in the introduction, we do not construct counterexamples in the endpoint cases $L^p_x L^q_t$, $\frac{3}{q} + \frac{n-1}{p} = 2$, except when $p = q = \frac{n+2}{2}$ or $(p,q) = (\frac{n-1}{2},+\infty)$ (steady example constructed above). This seems to suggest, perhaps, that local boundedness should also fail on the line between these two points, but that the counterexamples may be more subtle. It would be interesting to construct these examples. Since each `block' above is uniformly bounded in the desired spaces, we can say that, if local boundedness were to hold there, it must be depend on the `profile' of $b$ and not just its norm, as in Remark~\ref{rmk:boundedtotalspeedassertion}.
\end{remark}

\section{Upper bounds on fundamental solutions}
\label{sec:upperboundsonfundamentalsolutions}

For the Gaussian-like upper bounds on fundamental solutions, we consider a divergence-free vector field $b \in C^\infty_0(\R^n \times [0,+\infty))$ and $\alpha_0 \geq 0$, $C_0, M_0 > 0$ satisfying the following two properties:

\textbf{I}. \emph{Local boundedness}:  For each $x_0 \in \R^n$, $R > 0$, $t_0 \in \R_+$, parabolic cylinder $Q_R(x_0,t_0) = B_R(x_0) \times (t_0-R^2,t_0) \subset \R^n \times \R_+$, and Lipschitz solution $u \in W^{1,\infty}(Q_R(x_0,t_0))$, we have
\begin{equation}
	\label{eq:localboundednessforfundsol}
	\| u \|_{L^\infty(Q_{R/2}(x_0,t_0))} \leq C_0 \left( \frac{M_0}{R^{\alpha_0+2}} + \frac{1}{R^2} \right)^{\frac{n+2}{4}} \| u \|_{L^2(Q_R(x_0,t_0))}
\end{equation}

\textbf{II}. \emph{Global, revised form boundedness condition}, which we denote~\eqref{eq12.33}:
\begin{quote}
For each $x_0 \in \R^n$,  $R > 0$, open interval $I \subset \R_+$, and Lipschitz function $u \in W^{1,\infty}(B_R \times I)$, there exists a measurable set $A = A(x_0,R,b) \subset (R/2,R)$ with $|A| \geq R/4$ and satisfying
\begin{equation}
	\label{eq12.33}
	\tag{$\tilde{\text{FBC}}$}
\begin{aligned}
	& \frac{1}{|A|} \iint_{B_A \times I} \frac{|u|^2}{2} (b \cdot n) \, dx \, dt \leq \frac{M_0}{\varepsilon^{\alpha_0+1} R^{\alpha_0+2}} \iint_{B_R \times I} |u|^2 \, dx \, dt \\
	&\quad + \varepsilon \left( \frac{1}{R^2} \iint_{B_R \times I} |u|^2 \, dx \, dt + \iint_{B_R \times I} |\nabla u|^2 \, dx \, dt + \sup_{t \in I} \int_{B_R} |u(x,t)|^2 \, dx \right),
	\end{aligned}
\end{equation}
where $n$ is the unit outer normal direction and $B_A = \cup_{r \in A} \partial B_r$.
\end{quote}

Under the above conditions, we have

\begin{theorem}[Upper bounds on fundamental solutions]
	\label{thm:upperbounds}
If  $b \in C^\infty_0(\R^n \times [0,+\infty))$ is a divergence-free vector field satisfying \textbf{I} and \textbf{II}, then the fundamental solution $\Gamma = \Gamma(x,t;y,s)$ to the parabolic operator $L=\partial_t-\Delta + b \cdot \nabla$ satisfies the following upper bounds:
\begin{align}
	\label{eq:upperboundsendofpaper}
	&\Gamma(x,t;y,s) \leq C (t-s) \left( \frac{1}{t-s} + \frac{M_0}{(t-s)^{1+\frac{\alpha_0}{2}}} \right)^{\frac{n+2}{2}}\\
 &\quad\times \left[ \exp \left( - \frac{c |x-y|^2}{t-s}  \right) + \exp\left(  - \frac{ c |x-y|^{1+\frac{1}{1+\alpha_0}}}{((t-s) M_0)^{\frac{1}{1+\alpha_0}}} \right) \right]
\end{align}
for all $x,y \in \R^n$ and $0 \leq s < t < +\infty$, where $C,c > 0$ may depend on $C_0$, $\alpha_0$, and $n$.
\end{theorem}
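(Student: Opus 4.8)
The plan is to run the classical Aronson--Davies scheme for Gaussian upper bounds in its supercritical form, following the strategy behind Qi~S.~Zhang's estimates~\cite{zhang2004strong}: hypotheses \textbf{I} and \textbf{II} have been isolated precisely so that the scheme goes through with constants depending only on $C_0$, $\alpha_0$, $n$. Since $b\in C^\infty_0$, the fundamental solution $\Gamma$ is smooth and every integration by parts below is legitimate; the whole point is to keep all estimates quantitative in terms of $C_0,\alpha_0,n$ only.

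\emph{Step 1 (on-diagonal $L^1\to L^\infty$ bound).} First I would prove
\[
\|\Gamma(\cdot,\tau;y,s)\|_{L^\infty} \le P(\tau-s):=C(\tau-s)\Big(\tfrac{1}{\tau-s}+\tfrac{M_0}{(\tau-s)^{1+\alpha_0/2}}\Big)^{\frac{n+2}{2}} .
\]
Because $b$ is divergence-free, $\|\Gamma(\cdot,\tau;y,s)\|_{L^1}=1$ and $\tau\mapsto g(\tau):=\|\Gamma(\cdot,\tau;y,s)\|_{L^\infty}$ is non-increasing on $(s,\infty)$, so $\|\Gamma(\cdot,\sigma;y,s)\|_{L^2}^2\le g(\sigma)\le g(\tau-R^2)$ for $\sigma\in(\tau-R^2,\tau)$. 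Feeding this into hypothesis \textbf{I} on the cylinder $Q_R(x,\tau)$ with $R^2=(\tau-s)/2$ gives $g(\tau)\les\Lambda(\tau)\,g\big(\tfrac{\tau+s}{2}\big)^{1/2}$ with $\Lambda(\tau)=\big(\tfrac{M_0}{(\tau-s)^{(\alpha_0+2)/2}}+\tfrac{1}{\tau-s}\big)^{\frac{n+2}{4}}(\tau-s)^{1/2}$. Iterating along $\tau_k-s=(\tau-s)2^{-k}$, the factors $\Lambda(\tau_k)^{2^{-k}}$ form a convergent product equal to $\asymp P(\tau-s)$, while $g(\tau_k)^{2^{-k}}\to1$ thanks to the a priori Nash bound $g(\tau_k)\les(\tau_k-s)^{-n/2}$ (valid with a constant independent of the divergence-free drift, cf.~\eqref{eq:nashestimate}). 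The adjoint equation $-\partial_s\Gamma=\Delta_y\Gamma+b\cdot\nabla_y\Gamma$ is of the same type, so also $\|\Gamma(x,t;\cdot,m)\|_{L^\infty_y}\le P(t-m)$.

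\emph{Step 2 (integrated off-diagonal $L^2$ bound).} Next I would establish, for all $\rho>0$,
\[
\int_{|z-y|\ge\rho}\Gamma(z,\tau;y,s)^2\,dz\ \les\ P(\tau-s)\exp\!\big(-c\,\Psi(\rho,\tau-s)\big),\qquad \Psi(\rho,\sigma):=\min\!\Big\{\tfrac{\rho^2}{\sigma},\ \big(\tfrac{\rho^{\alpha_0+2}}{\sigma M_0}\big)^{\frac{1}{1+\alpha_0}}\Big\},
\]
by the exponentially-weighted energy argument of Davies and Aronson. With $w:=e^{\psi}\Gamma(\cdot,\tau;y,s)$ for a radial Lipschitz weight $\psi=\psi(|\cdot-y|)$, $\psi(0)=0$, one has, using $\operatorname{div}b=0$, that $\tfrac{d}{d\tau}\!\int w^2=-2\!\int|\nabla w|^2+2\!\int|\nabla\psi|^2w^2+2\!\int(b\cdot\nabla\psi)w^2$. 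The drift term is the only obstruction, and since $b$ is merely form-bounded it cannot be absorbed via $|b\cdot\nabla\psi|\le\|b\|_\infty|\nabla\psi|$. Instead I would take $\psi'\ge0$ supported on $\bigcup_j A_j$, where $A_j\subset(R_j/2,R_j)$ is the \emph{universal} (i.e.\ $u$-independent) good set supplied by \eqref{eq12.33} for the ball $B_{R_j}(y)$ at dyadic radii $R_j\asymp 2^j\sqrt{\tau-s}\le\rho$, with $\psi'$ constant on $A_j$ and increment $c_j:=\int_{A_j}\psi'$. Then $\int(b\cdot\nabla\psi)w^2=\sum_j c_j\cdot\tfrac{1}{|A_j|}\int_{B_{A_j}}(b\cdot n)w^2$, and (after integrating in $\tau$) \eqref{eq12.33}, applied with test function $w$, interval $I$ and small parameter $\varepsilon_j$, absorbs each term into $-2\int|\nabla w|^2$, the supremum term, and $\varepsilon_j R_j^{-2}\int w^2$, at the cost of a contribution $c_j M_0\varepsilon_j^{-\alpha_0-1}R_j^{-\alpha_0-2}\int w^2$. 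Summing the geometrically decaying dyadic contributions produces a differential inequality for $h(\tau):=\int w^2$ whose coefficient is controlled once the $c_j$ and $\varepsilon_j$ are chosen; requiring this coefficient, integrated over time $\asymp(\tau-s)$, to stay $\les1$ caps each $c_j$ from above in two competing ways — one from the $\varepsilon_j$-terms (diffusive scaling, giving the $\rho^2/(\tau-s)$ branch of $\Psi$) and one from the $M_0$-term (giving the anomalous $\rho^{(\alpha_0+2)/(1+\alpha_0)}$ branch) — and optimizing $\sum_j c_j=\psi(\rho)$ yields $\psi(\rho)\asymp\Psi(\rho,\tau-s)$. The singularity of $\Gamma$ as $\tau\to s^+$ is handled in the usual way by starting from a bump approximating $\delta_y$ and invoking the on-diagonal smoothing of Step~1 (equivalently, by bootstrapping over halved time intervals). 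The $\varepsilon$-free, $u$-independent formulation \eqref{eq12.33} (stronger than \eqref{eq:fbc}) is exactly what makes both a fixed weight and the $\varepsilon_j$-optimization possible; it is supplied in the cases of interest by the refinements recorded in Remark~\ref{rmk:trackingparametervarepsilon0}.

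\emph{Step 3 (from integrated to pointwise) and main obstacle.} With $m=\tfrac{s+t}{2}$ and $\rho=|x-y|$, write $\Gamma(x,t;y,s)=\int_{\R^n}\Gamma(x,t;z,m)\Gamma(z,m;y,s)\,dz$ and split over $\{|z-y|\ge\rho/2\}\cup\{|z-x|\ge\rho/2\}$, which covers $\R^n$. On the first set, Cauchy--Schwarz combined with Step~2 (for $\Gamma(\cdot,m;y,s)$ about $y$) and Step~1 (for $\|\Gamma(x,t;\cdot,m)\|_{L^2_z}^2\le\|\Gamma(x,t;\cdot,m)\|_{L^\infty}\le P(t-m)$) bounds the contribution by $\les P\big(\tfrac{t-s}{2}\big)\exp\!\big(-\tfrac{c}{2}\Psi(\rho/2,(t-s)/2)\big)$; the second set is symmetric, using the off-diagonal $L^2$ bound for the forward equation centered at $x$. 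Since $P(\tfrac{t-s}{2})\asymp P(t-s)$, $\Psi(\tfrac{\rho}{2},\tfrac{t-s}{2})\asymp\Psi(\rho,t-s)$, and $e^{-c\min\{a,b\}}\le e^{-ca}+e^{-cb}$, this is precisely the bound \eqref{eq:upperboundsendofpaper}. The main obstacle is Step~2: with no pointwise control on $b$, the weight must be built out of $b$ itself — it may only charge the universal good spheres of \eqref{eq12.33} — and the increments $c_j$ must be simultaneously large enough to accumulate into an exponentially decaying weight and small enough (relative to $\varepsilon_j$, $R_j$, $M_0$) that the right-hand side of \eqref{eq12.33} is reabsorbed into the Dirichlet energy and the supremum term. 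Carrying the book-keeping of $\varepsilon_j$, $R_j$ and $M_0$ through the dyadic sum so as to reproduce exactly the two regimes of $\Psi$ — the Euclidean/Gaussian term for moderate $\rho$ and the anomalous "fat tail" $\rho^{(\alpha_0+2)/(1+\alpha_0)}$ for large $\rho$ — is the technical heart of the proof.
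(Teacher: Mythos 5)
Your proposal is correct and follows essentially the same route as the paper: the Davies exponential-weight method, with the radial derivative of the weight supported on the good slices supplied by~\eqref{eq12.33}, the drift term absorbed by applying~\eqref{eq12.33} to $e^{\psi}u$, the on-diagonal prefactor supplied by hypothesis \textbf{I}, and the Gaussian versus fat-tail regimes obtained by optimizing the slope of the weight against the $\gamma^2$ and $M_0\gamma^{2+\alpha_0}$ terms in the resulting Gronwall exponent. The paper's execution is leaner at two points — it uses a single good annulus $A(x_0,|x_0|,b)\subset(|x_0|/2,|x_0|)$ with constant slope $\gamma$ optimized only at the very end (rather than your dyadic family of annuli with increments $c_j$ and parameters $\varepsilon_j$), and it concludes via the twisted-semigroup duality $\|P^{\psi}_{0\to t}\|_{L^1\to L^\infty}\le\|P^{\psi}_{0\to t/2}\|_{L^1\to L^2}\,\|P^{\psi}_{t/2\to t}\|_{L^2\to L^\infty}$ together with one application of \textbf{I} and the Nash estimate near the diagonal, in place of your iterated on-diagonal bound and Chapman--Kolmogorov spatial splitting — but these are differences of book-keeping, not of substance.
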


\begin{remark}[Comments on \textbf{I} and \textbf{II}]
Notice that~\eqref{eq12.33} in~\textbf{II} bounds the \emph{outflux} of scalar through the domain. The \emph{influx}, which was bounded explicitly in~\eqref{eq:fbc}, is handled implicitly by~\textbf{I}.

The parameter $\alpha_0$ in~\eqref{eq12.33} does not track the same information as the parameter $\alpha$ in~\eqref{eq:fbc}. The constant $M_0$ in~\eqref{eq12.33} has dimensions $L^{\alpha_0}$, whereas the constant $M$ in~\eqref{eq:fbc} is dimensionless. The upper bound in~\eqref{eq:upperboundsendofpaper} is dimensionally correct.

Upon optimizing in $\varepsilon$, one discovers that~\eqref{eq12.33} is equivalent to an interpolation inequality, see the derivation in Remark~\ref{rmk:trackingparametervarepsilon0}. The above form is reflective of how the estimate is utilized below.

It would be possible to incorporate a parameter $\delta_0 \in (0,1/2)$ such that $|A| \geq \delta_0 R$, as in~\eqref{eq:fbc}.
\end{remark}

We verify that the above conditions are satisfied in the setting of the main theorems:
\begin{lemma}[Verifying $\tilde{\text{FBC}}$]
	\label{lem:verifyingtildefbc}
	Let $p,q \in [1,+\infty]$ and $b \in C^\infty_0(\R^n \times [0,+\infty))$ be a divergence-free vector field.

	1. If
	\begin{equation}
	b \in L^q_t L^p_x(\R^n \times \R_+), \quad 1 \leq \zeta_0 := \frac{2}{q} + \frac{n}{p} < 2,
	\end{equation}
	then $b$ satisfies I and II with $\alpha_0 = \frac{\zeta_0-1}{\theta_2} (=\frac 1 {\theta_2}-2)$, $M_0 = C \| b \|_{L^q_t L^p_x(\R^n \times \R_+)}^{\frac{1}{\theta_2}} + \frac{1}{4}$, $C_0 = C$, and $\theta_2 = 1-\frac{\zeta_0}{2}$. 

	2. If
	\begin{equation}
	b \in L^p_x L^q_t(\R^n \times \R_+), \quad p \leq q, \quad 1 \leq \zeta_0 := \frac{3}{q} + \frac{n-1}{p} < 2,
	\end{equation}
	then $b$ satisfies I and II with $\alpha_0 = \frac{\zeta_0-1}{\theta_2} (=\frac 1 {\theta_2}-2)$, $M_0 = C \| b \|_{L^p_x L^q_t(\R^n \times \R_+)}^{\frac{1}{\theta_2}} + \frac{1}{4}$, $C_0 = C$, and $\theta_2 = 1-\frac{\zeta_0}{2}$.
\end{lemma}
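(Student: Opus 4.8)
The plan is to obtain both properties from the machinery already set up in Section~\ref{sec:localbdd}: condition \textbf{I} will follow from Theorem~\ref{thm:localboundedness} together with Proposition~\ref{pro:stuffissatisfied}, and condition \textbf{II} from the $\varepsilon$-tracked form boundedness computation recorded in Remark~\ref{rmk:trackingparametervarepsilon0}. In both cases of the lemma, the verification of \eqref{eq:condition1} or \eqref{eq:condition2} is already done there; what remains is purely dimensional bookkeeping, namely tracking how the radius $R$ enters through the scaling symmetry~\eqref{eq:scalingsymmetry} and checking that the exponent $\alpha_0+2$ on $R$ and $\alpha_0+1$ on $\varepsilon$ are exactly those produced by the identities $1/\theta_2=\alpha_0+2$, $(1-\theta_2)/\theta_2=\alpha_0+1$, and $(1-\zeta_0)/\theta_2=-\alpha_0$. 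Throughout we use that $b\in C^\infty_0(\R^n\times[0,+\infty))$ has all relevant norms finite, that the norm over any subdomain is dominated by the global one, and that $C\|b\|^{1/\theta_2}=M_0-\frac14\le M_0$, where $\|b\|$ denotes the global Lebesgue norm appearing in the statement. We also note that the constraint $\zeta_0<2$ forces $p>n/2$ in case~1, so the choice $\beta=\gamma=p$ is legitimate in Proposition~\ref{pro:stuffissatisfied}(1) (for which $L^q_tL^p_rL^p_\sigma=L^q_tL^p_x$ with surface measure); in case~2 one uses Minkowski to pass from $b\in L^p_xL^q_t$ to $b\in L^p_rL^q_tL^p_\sigma$, as in the corollary following Proposition~\ref{pro:stuffissatisfied}, and applies part (2) with $\kappa=p$.

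For condition \textbf{I}, fix $x_0,R,t_0$ and translate so that $(x_0,t_0)=(0,0)$, so that $Q_R(x_0,t_0)=Q_{R,-R^2}$ in the notation of Section~\ref{sec:localbdd}. Proposition~\ref{pro:stuffissatisfied}, applied with $R_0=R$, shows that $b$ satisfies \eqref{eq:fbc} on this cylinder with $N=\varepsilon=\frac14$, $\delta\in\{1,\frac12\}$, $\alpha=1/\theta_2=\alpha_0+2$ (resp.\ $\alpha=(1/p+(q-1)/q)/\theta_2$ in case~2), and $M\le CR^{(1-\zeta_0)/\theta_2}\|b\|^{1/\theta_2}+\frac14=CR^{-\alpha_0}\|b\|^{1/\theta_2}+\frac14$. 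Now apply Theorem~\ref{thm:localboundedness} with $\gamma=2$ to the non-negative Lipschitz subsolution $|u|$ (recall $u^+$ and $u^-$, hence $|u|=u^++u^-$, are subsolutions whenever $Lu=0$), choosing $\varrho=R/2$, $\tau=-R^2/4$, so that $R-\varrho=R/2$ and $\tau-T=3R^2/4$. Substituting these into~\eqref{eq:Cdef} and using $C\|b\|^{1/\theta_2}\le M_0$ gives $\mathbf{C}\le C(n,p,q)\bigl(M_0R^{-\alpha_0-2}+R^{-2}\bigr)$, whence $\|u\|_{L^\infty(Q_{R/2})}\les_{n,p,q}\mathbf{C}^{(n+2)/4}\|u\|_{L^2(Q_R)}$ is precisely~\eqref{eq:localboundednessforfundsol} with $C_0=C(n,p,q)$.

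For condition \textbf{II}, by the scaling symmetry~\eqref{eq:scalingsymmetry} it suffices to verify~\eqref{eq12.33} at $R=1$: given $x_0,R,I,u$ one applies the $R=1$ statement to the rescaled pair $u_R(y,s)=u(Ry,R^2s)$, $b_R(y,s)=Rb(Ry,R^2s)$ on $B_1\times R^{-2}I$ and undoes the rescaling, and the weights $R^{-\alpha_0-2}$ and $R^{-2}$ in~\eqref{eq12.33} are exactly what this produces, using $\|b_R\|_{L^q_tL^p_x(B_1\times R^{-2}I)}=R^{1-\zeta_0}\|b\|_{L^q_tL^p_x(B_R\times I)}$ (resp.\ its $L^p_xL^q_t$ analogue) and $R^{(1-\zeta_0)/\theta_2}=R^{-\alpha_0}$. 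At $R=1$ with $\varrho=\frac12$, condition~\eqref{eq12.33} is nothing but the $\varepsilon$-refined version of~\eqref{eq:computingforcondition1} (and its counterpart for~\eqref{eq:computingforcondition2}) stated in Remark~\ref{rmk:trackingparametervarepsilon0}, once we (i) take absolute values on the left, (ii) enlarge the $u$-norms from $(B_1\setminus B_{1/2})\times I$ to $B_1\times I$, (iii) replace the local norm $\|b\|_{L^q_tL^p_x((B_1\setminus B_{1/2})\times I)}$ (in case~2, after Minkowski, $\|b\|_{L^p_rL^q_tL^p_\sigma}\le\|b\|_{L^p_xL^q_t}$) by the global norm, absorbing the constant and the factor $(R-\varrho)^{-1/\theta_2}=2^{1/\theta_2}$ into $M_0$, and (iv) note that the exponent of $\varepsilon$ there is $(1-\theta_2)/\theta_2=\alpha_0+1$. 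As for the slice set: in case~1, $A=(\frac12,1)$ (hence $(R/2,R)$ after rescaling), manifestly independent of $u$ and $I$; in case~2 one must run the ``good slices'' Chebyshev argument of Proposition~\ref{pro:stuffissatisfied}(2) with the full-time function $r\mapsto\|b|_{\R_+\times\partial B_r}\|_{L^q_tL^p_\sigma}^{p}$, which is finite since $b\in C^\infty_0$, rather than with $I$, so that the resulting $A=A(x_0,R,b)$, of length $\ge\frac14$ (hence $\ge R/4$ after rescaling), is good simultaneously for every subinterval $I\subset\R_+$.

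The only step requiring real care — and hence the main obstacle — is the dimensional bookkeeping in these two rescalings: one must check that the $R$-weights in~\eqref{eq:localboundednessforfundsol} and~\eqref{eq12.33}, which are written in terms of the scale-independent constant $M_0$ built from the \emph{global} norm, reproduce the $R$-dependent constants $M$ furnished by Proposition~\ref{pro:stuffissatisfied} at scale $R_0=R$; this rests entirely on the identities $1/\theta_2=\alpha_0+2$, $(1-\theta_2)/\theta_2=\alpha_0+1$, $(1-\zeta_0)/\theta_2=-\alpha_0$, and on $C\|b\|^{1/\theta_2}\le M_0$. A secondary but genuine point, special to case~2, is the $u$- and $I$-independence of the slice set $A$, which forces the Chebyshev selection to be made with the full-in-time norm of $b$ as indicated above.
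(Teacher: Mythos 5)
Your overall scheme is exactly the paper's (whose proof of this lemma is essentially a two-sentence pointer): condition \textbf{I} from Proposition~\ref{pro:stuffissatisfied} plus Theorem~\ref{thm:localboundedness} plus the translation and scaling symmetries, and condition \textbf{II} from rescaling the $\varepsilon$-tracked estimate of Remark~\ref{rmk:trackingparametervarepsilon0}. Your case~1 bookkeeping checks out, and your insistence that in case~2 the Chebyshev selection be run with the full-in-time norm, so that $A=A(x_0,R,b)$ is independent of $I$ and $u$, is a genuine point the paper does not spell out; it is needed later, since the Davies argument fixes the weight $\psi$ (hence $A$) once and then lets $t$ vary.

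There is, however, one step in case~2 that does not hold as written. The parabolic rescaling $b_R(y,s)=Rb(Ry,R^2s)$ gives $\|b_R\|_{L^p_xL^q_t}=R^{1-\frac{2}{q}-\frac{n}{p}}\|b\|_{L^p_xL^q_t}$, and $\frac{2}{q}+\frac{n}{p}=\zeta_0+\frac{1}{p}-\frac{1}{q}$ when $\zeta_0=\frac{3}{q}+\frac{n-1}{p}$; so the scaling exponent is $1-\zeta_0-(\frac1p-\frac1q)$, not $1-\zeta_0$, and these differ whenever $p<q$. Since $\frac1p-\frac1q\ge 0$, at small $R$ the rescaled norm is strictly larger than the $R^{1-\zeta_0}\|b\|$ you assert, and carrying the correct exponent through your computation yields the coefficient $C\|b\|^{1/\theta_2}\varepsilon^{-(\alpha_0+1)}R^{-\mu}$ with $\mu=(1+\frac1p-\frac1q)/\theta_2$ — precisely the exponent $\alpha$ produced by Proposition~\ref{pro:stuffissatisfied}(2) with $\kappa=p$ — rather than $R^{-(\alpha_0+2)}$ with $\alpha_0+2=1/\theta_2$. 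Thus \eqref{eq12.33} is verified in case~2 only with $\alpha_0$ replaced by $\mu-2\ge\alpha_0$, with equality exactly when $p=q$. To be fair, this discrepancy is inherited from the lemma's own statement (whose $M_0$ in case~2 is not dimensionally consistent with the stated $\alpha_0$ unless $p=q$), and it is structurally harmless — one simply propagates the corrected $\alpha_0$ through Theorem~\ref{thm:upperbounds} — but the identity ``$\|b_R\|=R^{1-\zeta_0}\|b\|$'' for the $L^p_xL^q_t$ norm is false as stated, and it is the one place where the ``dimensional bookkeeping'' you correctly single out as the main obstacle actually bites.
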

\begin{proof}
\textbf{I} follows from Proposition~\ref{pro:stuffissatisfied}, Theorem~\ref{thm:localboundedness}, and the translation and scaling symmetries. \textbf{II} follows from rescaling $R_0$ in Remark~\ref{rmk:trackingparametervarepsilon0}.
\end{proof}

\begin{proof}[Proof of Theorem~\ref{thm:upperbounds}]
By a translation of the coordinates, we need only consider $\Gamma(x_0,t_0;0,0)$ for $t_0>0$ and $x_0\in \R^n$. We adapt a method due to E. B. Davies~\cite{daviesexplicit}. Let $\psi=\psi(|x|)=\psi(r)$ be a bounded radial Lipschitz function to be specified such that $\psi=0$ when $r<|x_0|/2$ and $\psi$ is a constant when $r\ge |x_0|$. For now, we record the property $|\nabla \psi| \leq \gamma$, where $\gamma > 0$ will be specified later.

\emph{1. Weighted energy estimates}.
Let $f\in C^\infty_0(\R^n)$ and $u$ be the solution to the equation $Lu=0$ in $\R^n \times \R^+$ with initial condition $u(0,\cdot)=e^{-\psi}f$. For $t\ge 0$, denote
\begin{equation}
J(t)=\frac 1 2\int_{\R^n}e^{2\psi(y)}u^2(y,t)\,dy.	
\end{equation}
Then, by integration by parts, for $t>0$, we have
\begin{align*}
\dot J(t)&=\int_{\R^n}e^{2\psi} u \p_t u \,dy
=\int_{\R^n}e^{2\psi}u(\Delta u - b\cdot \nabla u)\,dy\\
&=-\int_{\R^n}e^{2\psi}|\nabla u|^2\,dy - 2\int_{\R^n} e^{2\psi} u \nabla u \cdot \nabla \psi \, dy - \int_{\R^n}e^{2\psi}b\cdot \nabla \frac {u^2} 2\,dy\\
&\leq -\frac{1}{2} \int_{\R^n}e^{2\psi}|\nabla u|^2\,dy + C \gamma^2 J(t)  + \int_{\R^n}e^{2\psi}b\cdot \nabla \psi u^2\,dy,
\end{align*}
where we applied Young's inequality. Hence,
\begin{equation}
\begin{aligned}
&J(t)+ \frac{1}{2} \int_0^t\int_{\R^n}e^{2\psi}|\nabla u|^2\,dy\,ds \\
&\quad \leq J(0) + C \gamma^2 \int_0^t J(s) \, ds + \int_0^t\int_{\R^n}e^{2\psi}b\cdot \nabla \psi u^2\,dy\,ds.	
\end{aligned}
\end{equation}
Now for each $t>0$, we choose $\psi'(r)=\gamma \mathbf{1}_{A}$, where $A=A(x_0,|x_0|,b)$ is the set of `good slices' from~\eqref{eq12.33} in \textbf{II}.  By~\eqref{eq12.33} applied to $e^\psi u$, we have
\begin{align*}
&J(t)+ \frac{1}{2} \int_0^t\int_{\R^n}e^{2\psi}|\nabla u|^2\,dy\,ds\\
&\leq J(0) + C \gamma^2 \int_0^t J(s) \, ds + \gamma \int_0^t\int_{B_A}e^{2\psi}b\cdot \frac{y}{|y|} u^2\,dy\,ds\\
&\le J(0)+ C \gamma^2 \int_0^t J(s) \, ds + \gamma \varepsilon |A| \left (\sup_{s\in [0,t]}J(s)+\int_0^t \int_{B_{|x_0|}}e^{2\psi}|\nabla u|^2\,dy\,ds\right)\\
&\quad+\gamma \varepsilon |A| \int_0^t \int_{B_{|x_0|}}e^{2\psi}|\nabla \psi|^2 u^2\,dy\,ds
+ \left( \frac{\gamma M_0|A|}{\varepsilon^{\alpha_0+1}(|x_0|/2)^{\alpha_0+2}} + \frac{\gamma \varepsilon|A|}{|x_0|^2} \right) \int_0^t\int_{B_{|x_0|}}
e^{2\psi}u^2\,dy\,ds.
\end{align*}
Recall that $|A|\le |x_0|/2$. We set $\varepsilon=(100 |x_0| \gamma)^{-1}$ and take the supremum in $t$ in order to absorb the 3rd term on the right-hand side, which has coefficient $\gamma \varepsilon |A|$:
\begin{align*}
&\frac 1 2 \sup_{s\in [0,t]}J(s) + \frac{1}{4} \int_0^t\int_{\R^n}e^{2\psi}|\nabla u|^2\,dy\,ds\\
&\quad \le J(0) +\left( C \gamma^2+ C M_0 \gamma^{2+\alpha_0}  + |x_0|^{-2} \right)\int_0^t J(s)\,ds.
\end{align*}
Now, by the Gronwall inequality,
\begin{equation}
                        \label{eq1.23}
J(t)\le CJ(0)e^{( C \gamma^2+ C M_0 \gamma^{2+\alpha_0} + |x_0|^{-2} ) t} .
\end{equation}

\emph{2. Duality argument}. For $s<t$, we define an operator
\begin{equation}
                \label{eq12.17}
P^\psi_{s\to t} f(x)=e^{\psi(x)}\int_{\R^n} \Gamma(x,t;y,s)e^{-\psi(y)}f(y)\,dy.
\end{equation}
By the local boundedness estimate~\eqref{eq:localboundednessforfundsol} in \textbf{I}, for any $x\in \R^n$ and $R = \sqrt{t}$,
\begin{align*}
&e^{-2\psi(x)}|P_{0\to t}^\psi f(x)|^2=|u(x,t))|^2 \leq C_0  \left( \frac{1}{t} + \frac{M_0}{t^{1+\frac{\alpha_0}{2}}} \right)^{\frac{n+2}{2}} \int_0^t\int_{B_{\sqrt{t}}(x)} u^2(y,s) \,dy\,ds.
\end{align*}
Thus, allowing $C$ to depend on $C_0$, from~\eqref{eq1.23} we have
\begin{align*}
&|P_{0\to t}^\psi f(x)|^2\le C \left( \frac{1}{t} + \frac{M_0}{t^{1+\frac{\alpha_0}{2}}} \right)^{\frac{n+2}{2}} \int_0^t \int_{B_{\sqrt{t}}(x)} e^{2\psi(x)}u^2(y,s)\,dy\,ds\\
&\le C \left( \frac{1}{t} + \frac{M_0}{t^{1+\frac{\alpha_0}{2}}} \right)^{\frac{n+2}{2}}  \sup_{y\in B_{\sqrt{t}}(x)}e^{2(\psi(x)-\psi(y))}\int_0^t J(s)\,ds\\
&\overset{\eqref{eq1.23}}{\le} C t \left( \frac{1}{t} + \frac{M_0}{t^{1+\frac{\alpha_0}{2}}} \right)^{\frac{n+2}{2}}  e^{2\gamma\min\{ \sqrt{t},|x_0|/2\}} \times  e^{( C \gamma^2+ C M_0 \gamma^{2+\alpha_0} + |x_0|^{-2} ) t} J(0).
\end{align*}
Since $J(0)=\|f\|_{L^2}^2/2$, the above inequality together with a translation in time implies that
\begin{align*}
&\|P_{s\to t}^\psi\|_{L^2\to L^\infty}^2 \le C(t-s) \left( \frac{1}{t-s} + \frac{M_0}{(t-s)^{1+\frac{\alpha_0}{2}}} \right)^{\frac{n+2}{2}} \\
&\quad\cdot e^{2\gamma\min\{\sqrt{t-s},|x_0|/2\}} e^{( C \gamma^2+ C M_0 \gamma^{2+\alpha_0} + |x_0|^{-2} ) (t-s)} J(0)  .
\end{align*}
By duality, we also have
\begin{align*}
&\|P_{s\to t}^\psi\|_{L^1\to L^2}^2 \le C(t-s) \left( \frac{1}{t-s} + \frac{M_0}{(t-s)^{1+\frac{\alpha_0}{2}}} \right)^{\frac{n+2}{2}} \\
&\quad \times e^{2\gamma\min\{ \sqrt{t-s} ,|x_0|/2\}} e^{( C \gamma^2+ C M_0 \gamma^{2+\alpha_0} + |x_0|^{-2} ) (t-s)} J(0) .
\end{align*}
Therefore,
\begin{align*}
&\|P_{0\to t}^\psi\|_{L^1\to L^\infty}
\le \|P_{0\to t/2}^\psi\|_{L^1\to L^2}\|P_{t/2\to t}^\psi\|_{L^2\to L^\infty}\\
&\le Ct \left( \frac{1}{t} + \frac{M_0}{t^{1+\frac{\alpha_0}{2}}} \right)^{\frac{n+2}{2}} e^{2\gamma\min\{\sqrt{t},|x_0|/2\}} e^{( C \gamma^2+ C M_0 \gamma^{2+\alpha_0} + |x_0|^{-2} ) t} .
\end{align*}
From \eqref{eq12.17}, the above inequality implies that for any $x,y\in \R^n$,
\begin{align*}
&e^{\psi(x)-\psi(y)}\Gamma(x,t;y,0)\\
&\le Ct \left( \frac{1}{t} + \frac{M_0}{t^{1+\frac{\alpha_0}{2}}} \right)^{\frac{n+2}{2}} e^{2\gamma\min\{\sqrt{t},|x_0|/2\}} e^{( C \gamma^2+ C M_0 \gamma^{2+\alpha_0} + |x_0|^{-2} ) t}.
\end{align*}
In particular, we have
\begin{align*}
&\Gamma(x_0,t;0,0)\\
&\le Ce^{-\psi(x_0)} t \left( \frac{1}{t} + \frac{M_0}{t^{1+\frac{\alpha_0}{2}}} \right)^{\frac{n+2}{2}}  e^{2\gamma\min\{\sqrt{t},|x_0|/2\}} e^{( C \gamma^2+ C M_0 \gamma^{2+\alpha_0} + |x_0|^{-2} ) t} \\
&\le Ct \left( \frac{1}{t} + \frac{M_0}{t^{1+\frac{\alpha_0}{2}}} \right)^{\frac{n+2}{2}} \exp \left\lbrace 2\gamma \min \Big( \sqrt{t},\frac{|x_0|}{2} \Big) - \frac{\gamma |x_0|}{4}  +  C \gamma^2t + C M_0 \gamma^{2+\alpha_0} t + |x_0|^{-2} t \right\rbrace
\end{align*}
since $\psi(0)=0$ and $\psi(x_0)\ge \gamma |x_0|/4$.

\emph{3. Optimizing $\gamma$}.
In order to have a negative term in the exponential, we first consider
\begin{equation}
	\label{eq:sillyassumption}
	|x_0| \geq 16 \sqrt{t}.
\end{equation}
Then
\begin{equation}
	2\min \Big( \sqrt{t},\frac{|x_0|}{2} \Big) - \frac{|x_0|}{4} \leq - \frac{|x_0|}{8}
\end{equation}
Notice that final term in the exponential is already controlled:
\begin{equation}
	t |x_0|^{-2} \les 1.
\end{equation}
Hence,
\begin{equation}
	\Gamma(x_0,t;0,0) \les  t \left( \frac{1}{t} + \frac{M_0}{t^{1+\frac{\alpha_0}{2}}} \right)^{\frac{n+2}{2}} \exp \left( - \frac{1}{8} \gamma |x_0| + C \gamma^2 t + C M_0 \gamma^{2+\alpha_0} t \right).
\end{equation}
The new expression inside the exponential is
\begin{equation}
	- \frac{1}{8} \gamma |x_0| + \underbrace{C \gamma^2 t}_{A} + \underbrace{C M_0 \gamma^{2+\alpha_0} t}_{B}.
\end{equation}
We divide space into an `inner region' and `outer region', and we anticipate that the rate of decay may be modified in the outer region, where $B$ dominates.

\emph{3a. Outer region}.
We consider scalings of $\gamma$ in which $- \frac{1}{8} \gamma |x_0|$ overtakes $B$. Consider
\begin{equation}
	M_0 \gamma^{2+\alpha_0} t = \varepsilon \gamma |x_0|.
\end{equation}
Then $\frac{1}{8} \gamma |x_0| \gg B$ when
\begin{equation}
	\gamma^{1+\alpha_0} = \varepsilon M_0^{-1} \frac{|x_0|}{t}
\end{equation}
and $\varepsilon \ll 1$. In this scaling, we have that $B \geq A$ ($CM_0 \gamma^{2+\alpha_0} \geq C \gamma^2$) when
\begin{equation}
	M_0 \gamma^{\alpha_0} \geq 1,
\end{equation}
or
\begin{equation}
	\label{eq:region1}
	 \frac{M_0^{\frac{1}{\alpha_0}} |x_0|}{t} \geq \varepsilon^{-1}.
\end{equation}
In this region, under the additional assumption~\eqref{eq:sillyassumption}, we have the exponential bound
\begin{equation}
	\label{eq:exponentialbound1}
	\Gamma(x_0,t;0,0) \les  t \left( \frac{1}{t} + \frac{M_0}{t^{1+\frac{\alpha_0}{2}}} \right)^{\frac{n+2}{2}} \exp\left(  - \frac{ c |x_0|^{1+\frac{1}{1+\alpha_0}}}{(t M_0)^{\frac{1}{1+\alpha_0}}} \right)
\end{equation}

\emph{3b. Inner region}. We now consider scalings of $\gamma$ in which $- \frac{1}{8} \gamma |x_0|$ overtakes the term $A$. Consider
\begin{equation}
	\gamma^2 t = \varepsilon \gamma |x_0|.
\end{equation}
Then $\frac{1}{8} \gamma |x_0| \gg A$ when
\begin{equation}
	\gamma = \varepsilon |x_0|/t
\end{equation}
 and $\varepsilon \ll 1$. In this scaling, we have that $A \geq B$ ($C \gamma^2 \geq CM_0 \gamma^{2+\alpha_0}$) when
\begin{equation}
	M_0 \gamma^{\alpha_0} \leq 1,
\end{equation}
or
\begin{equation}
	\label{eq:region2}
	M_0^{\frac{1}{\alpha_0}} \frac{|x_0|}{t} \leq \varepsilon^{-1}.
\end{equation}
In this region, under the additional assumption~\eqref{eq:sillyassumption}, we have the exponential bound
\begin{equation}
	\label{eq:exponentialbound2}
	\Gamma(x_0,t;0,0) \les  t \left( \frac{1}{t} + \frac{M_0}{t^{1+\frac{\alpha_0}{2}}} \right)^{\frac{n+2}{2}} \exp \left( - \frac{c |x_0|^2}{t}  \right).
\end{equation}

\emph{3c. Patching}. Combining~\eqref{eq:exponentialbound1} and~\eqref{eq:exponentialbound2}, we have
\begin{equation}
	\label{eq:patchedestimate}
	\Gamma(x_0,t;0,0) \les t \left( \frac{1}{t} + \frac{M_0}{t^{1+\frac{\alpha_0}{2}}} \right)^{\frac{n+2}{2}} \left[ \exp \left( - \frac{c |x_0|^2}{t}  \right) + \exp\left(  - \frac{ c |x_0|^{1+\frac{1}{1+\alpha_0}}}{(t M_0)^{\frac{1}{1+\alpha_0}}} \right) \right].
\end{equation}
under the assumption~\eqref{eq:sillyassumption}. On the other hand, when $|x_0| \leq 16 \sqrt{t}$, the fundamental solution is controlled by the Nash estimate~\cite{nash1958continuity}:
\begin{equation}
	\| \Gamma(\cdot,t;0,0) \|_{L^\infty(\R^n)} \les t^{-\frac{n}{2}},
\end{equation}
which is independent of the divergence-free drift. This contributes to the prefactor in~\eqref{eq:patchedestimate}. The proof is complete. \end{proof}

{\small
\subsubsection*{Acknowledgments} DA thanks Vladim{\'i}r {\v S}ver{\'a}k for encouraging him to answer this question and helpful discussions. DA also thanks Tobias Barker and Simon Bortz for helpful discussions, especially concerning Remark~\ref{rmk:timedependentexamples}, and their patience. DA was supported by the NDSEG Fellowship and NSF Postdoctoral Fellowship Grant No.~2002023. HD was partially supported by the Simons Foundation, Grant No.~709545, a Simons Fellowship, and the NSF under agreement DMS-2055244.
}

\bibliographystyle{alpha}
\bibliography{divfreedriftsbib}

\end{document}